\newcommand{\spa}[1]{\operatorname{span} \{#1\}}
\newcommand{\Ima}{\operatorname{Im}}
\newcommand{\scalprod}[2]{\left\langle #1,#2 \right\rangle}
\newcommand{\tr}[1]{\operatorname{tr}(#1)}
\def\sp(#1,#2){\left\langle #1,#2 \right\rangle}
\def\bp#1{\sp(#1)}
\newcommand{\norm}[1]{\left\| #1 \right\|} 
\newcommand{\ind}{\operatorname{ind}} 
\DeclareFontFamily{U}{matha}{\hyphenchar\font45}
\DeclareFontShape{U}{matha}{m}{n}{
      <5> <6> <7> <8> <9> <10> gen * matha
      <10.95> matha10 <12> <14.4> <17.28> <20.74> <24.88> matha12
      }{}
\DeclareSymbolFont{matha}{U}{matha}{m}{n}
\DeclareFontFamily{U}{mathx}{\hyphenchar\font45}
\DeclareFontShape{U}{mathx}{m}{n}{
      <5> <6> <7> <8> <9> <10>
      <10.95> <12> <14.4> <17.28> <20.74> <24.88>
      mathx10
      }{}
\DeclareSymbolFont{mathx}{U}{mathx}{m}{n}
\DeclareMathSymbol{\obot}         {2}{matha}{"6B}
\DeclareMathSymbol{\bigobot}       {1}{mathx}{"CB}
\renewcommand{\P}{\mathbb{P}}
\newcommand{\R}{\mathbb{R}}
\renewcommand{\E}{\mathbb{E}} 
\newcommand{\Si}{\mathbb{S}}
\newcommand{\eunn}{\mathbb{E}^{n}_{\nu}}
\newcommand{\punn}{\mathbb{P}^{n}_{\nu}}
\newcommand{\sgn}{\operatorname{sgn}}
\def\ve{\mathfrak{X}} 
\newcommand{\Ei}{{\mathcal{E}}}
\newcommand{\F}{{\mathcal{F}}}
\theoremstyle{break} 
\newtheorem{theorem}{Theorem}[section]
\newtheorem{proposition}[theorem]{Proposition}
\newtheorem{corollary}[theorem]{Corollary}
\newtheorem{lemma}[theorem]{Lemma}
\newtheorem{example}[theorem]{Example}
\newtheorem{remark}[theorem]{Remark}
\newtheorem{definition}[theorem]{Definition}
\newtheorem{claim}{Claim}[theorem]
\theoremstyle{nonumberplain}
\newtheorem{proof}{Proof}
\numberwithin{equation}{section} 
\newcounter{partCounter}
\newenvironment{parts}{
	\begin{list}{\bfseries{}Case \arabic{partCounter}~}{\usecounter{partCounter}}
}{
	\end{list}
}
\renewcommand{\d}{{\textrm d}}
\begin{document}
\pagenumbering{roman}
\title{Warped products and Spaces of Constant Curvature}
\author{Krishan Rajaratnam\footnote{Department of Applied Mathematics, University of Waterloo, Canada; e-mail: k2rajara@uwaterloo.ca}}
\date{\today} 
\maketitle
\begin{abstract}\centering
	We will obtain the warped product decompositions of spaces of constant curvature (with arbitrary signature) in their natural models as subsets of pseudo-Euclidean space. This generalizes the corresponding result by \citeauthor{Nolker1996} in \cite{Nolker1996} to arbitrary signatures, and has a similar level of detail. Although our derivation is complete in some sense, none is proven. Motivated by applications, we will give more information for the spaces with Euclidean and Lorentzian signatures. This is an expository article which is intended to be used as a reference. So we also give a review of the theory of circles and spheres in pseudo-Riemannian manifolds.
\end{abstract}
\tableofcontents
\cleardoublepage
\phantomsection		

\printglossaries
\cleardoublepage
\phantomsection


\addcontentsline{toc}{section}{List of Results}
\textbf{List of Results}
\theoremlisttype{allname} 
\listtheorems{theorem,proposition,corollary}
\cleardoublepage
\phantomsection

\pagenumbering{arabic}
\newpage
\section{Introduction}

Warped products are ubiquitous in applications of pseudo-Riemannian geometry. Most of the separable coordinate systems in spaces of constant curvature are built up using them \cite{Kalnins1986}, and some exact solutions in general relativity are composed of them \cite{Dobarro2005,Zeghib2011}. They can intuitively be thought of as a partial generalization of the spherical coordinate system to arbitrary pseudo-Riemannian manifolds. Indeed, it can be shown that all the spherical coordinate systems (on any space of constant curvature) can be constructed iteratively using warped products, and that they share several properties with these coordinate systems. Similarly the well known Schwarzschild metric in relativity can be constructed using warped products.

Various geometrical objects take canonical forms in warped products. For example, one can calculate general formulas for the Levi-Civita connection and the Riemann curvature tensor in a warped product \cite{Meumertzheim1999}. These product manifolds can be used to construct geometrical objects with special properties. For example, it was shown in \cite{Rajaratnam2014a}, that one can use the warped product decompositions of a given space to construct Killing tensors and hence coordinates which separate the Hamilton-Jacobi equation. Thus it is only natural that we determine the warped products which are isometric to spaces of constant curvature.

We now describe more precisely the problem we solve, after introducing some definitions. A \emph{warped product} is a product manifold $M = \prod_{i=0}^{k} M_{i}$ of pseudo-Riemannian manifolds $(M_{i}, g_{i})$ where $\dim M_{i} > 0$ for $i > 0$ equipped with the metric

\begin{equation}
	g = \pi_{0}^{*} g_{0} + \sum_{i=1}^{k} \rho_{i}^{2} \pi_{i}^{*} g_{i}
\end{equation}

\noindent where $\rho_{i} : M_0 \rightarrow \R^{+}$ are functions and $\pi_{i} : M \rightarrow M_{i}$ are the canonical projection maps \cite{Meumertzheim1999}. The warped product is denoted by $M_0 \times_{\rho_1} M_1 \times \cdots \times_{\rho_k} M_k $. We say a warped product is a \emph{warped product decomposition} of a pseudo-Riemannian manifold $M$ if it is isometric to some non-empty open subset of $M$. In this article we will present an interesting class of warped product decompositions of spaces of constant curvature (with arbitrary signature).

Our solution follows that by \citeauthor{Nolker1996} in \cite{Nolker1996}, which is for the special case of Riemannian spaces of constant curvature. We make use of the observation that for a warped product $M = M_0 \times_{\rho_1} M_1 \times \cdots \times_{\rho_k} M_k $, $M_0$ is a \emph{geodesic submanifold} of $M$ and for each $i > 0$ the manifold $M_i$ is a \emph{spherical submanifold} of $M$\footnote{Often a geodesic submanifold is called totally geodesic and a spherical submanifold is called an extrinsic sphere.} \cite{Meumertzheim1999}. Thus after characterizing all geodesic and spherical submanifolds of spaces of constant curvature, we generalize a formula given in \cite{Nolker1996} to obtain an interesting class of warped product decompositions.

Our primary motivation for this work comes from \cite{Rajaratnam2014a}, where it was shown that warped products can be used to construct coordinates which separate the Hamilton-Jacobi equation. Based on this application, it will become clear (in a following article) that our work is ``complete''. We are mainly interested in exposing these results for reference purposes. Since as of now, it is difficult to find any articles/books which can be used as a reference for our purposes. For a similar reason, we will present a review of the theory of circles and spheres in pseudo-Riemannian manifolds as well.

This article is mostly self-contained, so it can be used as a reference. However, we use some results from the theory of pseudo-Riemannian submanifolds in \cite{chen2011pseudo}, which are only necessary to understand certain proofs. We also assume the reader is familiar with \cite{barrett1983semi}, especially with the basic properties of pseudo-Euclidean vector spaces and (pseudo-)Riemannian submanifold theory. Familiarity with the article \cite{Nolker1996} is useful but not necessary.

The article is organized as follows. After defining basic notations in \cref{sec:notNCon}, we summarize our results in \cref{sec:WPsNSCC}. This summary should be sufficient for applications. The subsequent sections provide proofs and more details. In \cref{sec:pRiemSubFol} we give a brief review of the theory of pseudo-Riemannian submanifolds/foliations. In \cref{sec:circSphSub} we apply these concepts by reviewing the theory of circles and spheres in pseudo-Riemannian manifolds. This section is optional but it gives a geometric interpretation of warped products and is included because there are relatively few reviews of this topic. In \cref{sec:StdSubEunn,sec:SphSubSCC,sec:WP,sec:wpSCC} we review preliminary theory on the spherical submanifolds and warped products in spaces of constant curvature and warped products in general. We give the warped product decompositions of pseudo-Euclidean space in \cref{sec:WPdecompEunn} and of spherical submanifolds of pseudo-Euclidean space in \cref{sec:WPdecompSphEunn}. \Cref{sec:IsoSphEunn} is another optional section which gives the isometry groups of spherical submanifolds of pseudo-Euclidean space, referring to \cite{barrett1983semi} in the appropriate cases.

\section{Notations and Conventions} \label{sec:notNCon}

All differentiable structures are assumed to be smooth (class $C^{\infty}$). Let $M$ be a pseudo-Riemannian manifold of dimension $n$ equipped with covariant metric $g$. Unless specified otherwise, it is assumed that $n \geq 2$. The contravariant metric is usually denoted by $G$ and $\bp{\cdot, \cdot}$ plays the role of the covariant and contravariant metric depending on the arguments. We denote by $\F(M)$ the set of functions from M to $\R$ and $\ve(M)$ denotes the set of vector fields over $M$. If $x \in \ve(M)$ then we denote $x^2 := \bp{x,x}$ and $\norm{x} := \sqrt{\Abs{\bp{x,x}}}$.

Throughout this article we will be working in pseudo-Euclidean space, which is defined as follows. An $n$-dimensional vector space $V$ equipped with metric $g$ of signature\footnote{The signature is equal to the number of negative diagonal entries in a basis which diagonalizes $g$.} $\nu$ is denoted by \gls{eunn} and called \emph{pseudo-Euclidean space}. We obtain Euclidean space $\E^n$ in the special case where $\nu = 0$. Also Minkowski space $M^n$ is obtained by taking $\nu = 1$. A subspace $U \subseteq \eunn$ is called \emph{non-degenerate} if the induced metric is non-degenerate and in this case we denote by $\ind U$ the signature of the induced metric. Also note that since $\eunn$ is a vector space, for any $p \in \eunn$ we identify vectors in $T_p \eunn$ with points in $\eunn$.

Given an open subset $U \subseteq \eunn$ and $\kappa \in \R \setminus \{0\}$, we denote by $U(\kappa)$ the \emph{central hyperquadric} of $\eunn$ contained in $U$, which is defined by:

\begin{equation}
	U(\kappa) = \{p \in U \; | \; \bp{p,p} = \kappa^{-1} \}
\end{equation}

Usually $U = \eunn$ and this is denoted \gls{eunnk}. The notation $U(\kappa)^\circ$ represents a maximal connected component of $U(\kappa)$. It is well known that $\eunn(\kappa)$ is a pseudo-Riemannian manifold of dimension $n-1$ with signature $\nu + \frac{(\gls{sgn} \kappa -1)}{2}$ and constant curvature\footnote{This will be proven later.} $\kappa$ \cite{barrett1983semi}. Since $\eunn(\kappa) \subset \eunn$, for any $p \in \eunn(\kappa)$ we identify vectors in $T_p \eunn(\kappa)$ with points in $\eunn$. Occasionally we use the following conventions: If $\kappa = 0$, we set $\eunn(0) := \eunn$, if $\kappa = \infty$ we set $\eunn(\infty)$ to be the light cone, i.e. the set of non-zero null vectors. We also use the following notations: If $\kappa > 0$ then $S_{\nu}^n(\kappa) := \E^{n+1}_{\nu}(\kappa)^\circ$, if $\kappa < 0$ then $H_{\nu}^n(\kappa) := \E^{n+1}_{\nu+1}(\kappa)^\circ$.

We define the parabolic embedding of $\eunn$ in $\E^{n+2}_{\nu+1}$ with mean curvature vector $-a \in \eunn(\infty)$ by \cite{Tojeiro2007}

\begin{equation}
	\punn := \{p \in \E^{n+2}_{\nu+1}(\infty) : \bp{p, a} = 1 \}
\end{equation}

An explicit isometry with $\eunn$ is obtained by choosing $b \in \punn$, i.e. $b$ is lightlike and $\bp{a, b} = 1$. We let $V := \spa{a,b}^\perp$, note that $V \cong \eunn$, then for $x \in V$:

\begin{equation} \label{eq:psiPunn}
	\psi(x) = b + x  - \frac{1}{2}x^2a \in \punn
\end{equation}

More details on the properties of $\punn$ will be given later on (see \cref{prop:eunnPunn}). Finally, we define the \emph{dilatational vector field} in $\eunn$, $r$, to be the vector field satisfying for any $p \in \eunn$, $r_p = p \in T_p \eunn$.

\section{Warped products in Spaces of Constant Curvature} \label{sec:WPsNSCC} 

In this section we will briefly describe the warped product decompositions of spaces of constant curvature, in a way which is useful for applications. The proofs of many of the assertions will come in the following sections. We will use the notation $\eunn(\kappa)$ (where $\kappa$ can be zero) to represent the general space of constant curvature. First we will need to know the spherical submanifolds of these spaces.

\begin{theorem}[Spherical submanifolds of $\eunn(\kappa)$] \label{thm:spherSubI}
	Let $\overline{p} \in \eunn(\kappa)$ be arbitrary, $V \subset T_{\bar{p}}\eunn(\kappa)$ a non-degenerate subspace with $m := \dim V \geq 1$, $\mu := \ind V$ and $z \in V^{\perp} \cap T_{\bar{p}}\eunn(\kappa)$. Let $a := \kappa \bar{p} -z$, $\tilde{\kappa} := a^2$ and $W := \R a \obot V$. There is exactly one $m$-dimensional connected and geodesically complete spherical submanifold $\tilde{N}$ with $\bar{p} \in \tilde{N}$, $T_{\bar{p}} \tilde{N} = V$ and having mean curvature vector at $\bar{p}$, z. $\tilde{N}$ is an open submanifold of N; N is referred to as the spherical submanifold determined by $(\bar{p},V,a)$, it is geodesic iff $z = 0$ and is given as follows (where $\simeq$ means isometric to):
	
	\begin{enumerate}[(a),style=multiline]
		\item \label{it:sphEunnaI} $a = 0$, in this case $N \simeq \E^m_{\mu}$ \\
		\begin{equation}
			N = \overline{p} + V
		\end{equation}
		\item \label{it:sphEunnbI} $a$ is timelike, then $\mu \leq \nu - 1$ and $N \simeq H^m_{\mu}(\tilde{\kappa})$
		\item \label{it:sphEunncI} $a$ is spacelike, then $N \simeq S^m_{\mu}(\tilde{\kappa})$ \\
		For cases (b) and (c), let $c = \overline{p} -\frac{a}{\tilde{\kappa}}$ be the center of N, then N is given as follows:
		\begin{equation}
			N = c + \{ p \in W \: | \: p^2 = \frac{1}{\tilde{\kappa}} \}
		\end{equation}
		\item \label{it:sphEunndI} $a$ is lightlike, then $\mu \leq \nu - 1$ and $N \simeq \E^m_{\mu}$
		\begin{equation}
			N = \bar{p} + \{ p - \frac{1}{2} p^2 a  \: | \: p \in V \}
		\end{equation}
	\end{enumerate}
\end{theorem}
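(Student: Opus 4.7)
The plan is to reduce the problem to classifying spherical submanifolds of the flat ambient $\eunn$ with prescribed initial data, and then to verify the four explicit formulas case by case.

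For the reduction, I would first observe that for $\kappa \neq 0$ the hyperquadric $\eunn(\kappa)$ is itself a spherical submanifold of $\eunn$; a direct calculation based on $\tilde\nabla_X r = X$ for the dilatational field $r$ yields its mean curvature vector at $\bar p$ as $-\kappa\bar p$. By the additivity of second fundamental forms for nested submanifolds $N \subset \eunn(\kappa) \subset \eunn$, $N$ is spherical in $\eunn(\kappa)$ with mean curvature vector $z$ at $\bar p$ iff $N$ is spherical in $\eunn$ with mean curvature vector $z - \kappa\bar p = -a$ at $\bar p$. When $\kappa = 0$ this is tautological. Hence it suffices to show that the triple $(\bar p, V, a)$ uniquely determines a maximal connected geodesically complete spherical submanifold of $\eunn$ and that the formulas in the theorem provide it.

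For uniqueness, I would exploit the ODE satisfied by a geodesic $\gamma$ of $\tilde N$ viewed in $\eunn$. The Gauss formula and umbilicity give $\tilde\nabla_{\gamma'}\gamma' = \bp{X,X}\,H(\gamma)$ where $X = \gamma'(0) \in V$; the Weingarten formula together with the computation $A_H = \bp{H,H}\,\mathrm{id} = \tilde\kappa\,\mathrm{id}$ (valid for umbilics) and $\nabla^\perp H = 0$ (sphericity) give $\tilde\nabla_{\gamma'} H = -\tilde\kappa\,\gamma'$. Combining these yields a closed linear third-order ODE $\gamma''' + \bp{X,X}\tilde\kappa\,\gamma' = 0$ for the position of $\gamma$ in $\eunn$, with initial data $(\bar p,\,X,\,-\bp{X,X}a)$. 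Standard ODE uniqueness pins down each geodesic, varying $X$ over $V$ pins down $\tilde N$, and connectedness plus geodesic completeness promote this to global uniqueness.

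For existence I would construct $N$ in each case and check that it is spherical with the prescribed data, essentially by integrating the ODE above. In case (a), $a = 0$: the affine subspace $\bar p + V$ is trivially totally geodesic. In cases (b), (c), $a$ non-null: with $\omega = \bp{X,X}\tilde\kappa \neq 0$ integration gives $\gamma(t) = c + \frac{a}{\tilde\kappa}\cos(\sqrt\omega\, t) + \frac{X}{\sqrt\omega}\sin(\sqrt\omega\, t)$, which lies on the hyperquadric $c + \{p \in W : p^2 = 1/\tilde\kappa\}$; letting $X$ range over $V$ sweeps out this hyperquadric, and one verifies $T_{\bar p} N = a^\perp \cap W = V$. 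In case (d), $a$ lightlike: with $\tilde\kappa = 0$ integration gives $\gamma(t) = \bar p + tX - \tfrac12 t^2\bp{X,X}a$, whose trace over $X \in V$ is the paraboloid $\bar p + \{p - \tfrac12 p^2 a : p \in V\}$; since $a^2 = 0$ and $a \perp V$, the pullback metric reduces to $\bp{\cdot,\cdot}|_V$, so $N \simeq \E^m_\mu$. In every case containment $N \subset \eunn(\kappa)$ is verified by computing $\bp{q,q}$ for $q \in N$ using $a = \kappa\bar p - z$ together with $\bp{\bar p, z} = 0$ and $\bp{\bar p, V} = 0$ (when $\kappa \neq 0$).

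The main obstacle is keeping the sign conventions straight in the composition of second fundamental forms and in matching the theorem's definition of $a$ with the ambient mean curvature vector. A secondary issue is the signature constraint $\mu \le \nu - 1$ in cases (b) and (d): in (b) it follows from $W = \R a \obot V$ being non-degenerate of signature $\mu + 1$ inside $\eunn$, and in (d) from $V^\perp$ needing at least one timelike direction in order to house a nonzero lightlike $a$.
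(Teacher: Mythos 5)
Your overall strategy is sound and your reduction step is exactly the paper's: the relation between the second fundamental forms of $N$ in $\eunn(\kappa)$ and in $\eunn$ (the paper's \cref{lem:sphEunnkap}, giving $H = H' - r/r^2$, i.e. ambient mean curvature $z - \kappa\bar p = -a$ at $\bar p$) is precisely how \cref{thm:spherKapSub} is reduced to \cref{thm:spherSub}. Where you diverge is in how existence and uniqueness in $\eunn$ are handled. The paper does not integrate anything: it invokes a general uniqueness-of-spheres lemma (\cref{lem:unqSph}, valid in any ambient manifold, proved via the circle ODE plus parallel transport along broken geodesics), and for existence it simply cites that the hyperquadrics $\eunn(\tilde\kappa)$ and the paraboloid $\punn$ are connected, geodesically complete spherical submanifolds with the required data (\cref{prop:eunnStdHQuad,prop:eunnPunn}). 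Your explicit integration of $\gamma''' + \bp{X,X}\tilde\kappa\,\gamma' = 0$ is more self-contained and makes the trigonometric/hyperbolic/parabolic trichotomy transparent, at the cost of being tied to the flat ambient space; the paper's route generalizes and outsources the model-space facts to \cite{barrett1983semi}.

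There is one genuine gap in your uniqueness step. ODE uniqueness pins down every geodesic of $\tilde N$ emanating from $\bar p$, hence the image of $\exp_{\bar p}$; but in the pseudo-Riemannian setting a connected, geodesically complete submanifold need not be covered by geodesics from a single point (de Sitter space already fails this), so "varying $X$ over $V$ pins down $\tilde N$" does not follow. You must pass to broken geodesics, and to restart the ODE at an intermediate point $q$ you need to know $T_q\tilde N$ and $H_q$ there. The paper supplies exactly this in \cref{lem:unqSph}: tangent vectors of $\tilde N$ orthogonal to the geodesic are parallel in the ambient space along it (because $h(Z,X) = \bp{Z,X}H = 0$), so $T_q\tilde N$ is the ambient parallel transport of $T_{\bar p}\tilde N$, and $H_q$ is determined by $\nabla^\perp H = 0$. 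Without this propagation step your argument only establishes uniqueness of the geodesically star-shaped piece around $\bar p$. Secondarily, your claim that the geodesics "sweep out" the hyperquadric in cases (b) and (c) is false for indefinite $V$; it is harmless only because the correct logic is the reverse (verify the model is spherical with the right data, then apply uniqueness), and you should also say which connected component is meant when $N \simeq H^m_0$ or $S^m_m$, as the paper does in \cref{rem:tildN}.
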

\begin{remark}
	If $a$ is lightlike, then $N$ is isometric to $\P^m_{\mu}$ with mean curvature vector $-a$. Furthermore, let $b \in V^\perp$ be a lightlike vector satisfying $\bp{a,b} = 1$. Then the orthogonal projector onto $V$, $P$, induces an isometry of $N - \bar{p} + b$ onto $V$.
\end{remark}
\begin{remark}
	One can find more details on when $N$ is connected in the remarks following \cref{thm:spherSub,thm:spherKapSub}.
\end{remark}
\begin{proof}
	See \cref{thm:spherSub,thm:spherKapSub}.
\end{proof}

With the knowledge of these spherical submanifolds, we can now specify how to construct warped products in $\eunn(\kappa)$. This construction depends on the following data: A point $\bar{p} \in \eunn(\kappa)$, a decomposition $T_{\bar{p}} \eunn(\kappa) = \bigobot\limits_{i=0}^k V_i$ into non-trivial (hence non-degenerate) subspaces with $k \geq 1$, and vectors $z_1,\dotsc,z_k \in V_0$ such that the vectors $a_i := \kappa \bar{p} - z_i$ are pair-wise orthogonal and independent. We call the data $(\bar{p};\bigobot\limits_{i=0}^k V_i; a_1,...,a_k)$, \emph{initial data} for a (proper) warped product decomposition of $\eunn(\kappa)$. If $\kappa = 0$, one can more generally let some of the $a_i$ be zero, this results in Cartesian products as done in \cite{Nolker1996}. Since we assume the $a_i$ are non-zero, we sometimes use the additional qualifier ``proper''.

With this initial data, for $i > 0$ let $N_i$ be the sphere in $\eunn(\kappa)$ determined by $(\bar{p}, V_i, a_i)$ and $\rho_i(p_0) = 1 + \bp{a_i, p_0 - \overline{p}}$. Let $N_0$ be the subset of the sphere in $\eunn(\kappa)$ determined by $(\bar{p}, V_0, \kappa \bar{p})$ where each $\rho_i > 0$. Then the data $(\bar{p}; \bigobot\limits_{i=0}^{k} V_{i}; a_{1},...,a_{k})$, induces a warped product decomposition (of $\eunn(\kappa)$) given as follows:

\begin{equation} \label{eq:genPsiEqnI}
	\psi : \begin{cases}
	N_{0} \times _{\rho_{1}} N_{1} \times \cdots  \times _{\rho_{k}} N_{k} & \rightarrow \eunn(\kappa) \\
	(p_{0},...,p_{k}) & \mapsto p_{0} + \sum\limits_{i=1}^{k} \rho_{i}(p_{0})(p_{i} - \overline{p})
	\end{cases}
\end{equation}

\noindent We note that $\psi$ has the property that $\psi(\bar{p},\dotsc,p_i,\bar{p},\dotsc,\bar{p}) = p_i$.  Often the point $\bar{p}$ doesn't enter calculations, hence we will usually omit it. We note that the above formula generalizes one given in \cite{Nolker1996}.

For actual calculations, it will be more convenient to work with canonical forms. The following definition will be particularly useful.

\begin{definition}[Canonical form for Warped products of $\eunn$]
	We say that a proper warped product decomposition of $\eunn$ determined by $(\bar{p}; \bigobot\limits_{i=0}^k V_i; a_1,...,a_k)$ is in canonical form if: $\bar{p} \in V_0$ and $\bp{\bar{p}, a_i} = 1$.
\end{definition}

Any proper warped product decomposition $\psi$ of $\eunn$ can be brought into canonical form, see the discussion preceding \cref{cor:WPconForm} for details.

We will now give more information on standard warped product decompositions of $\eunn$ in canonical form. Suppose the initial data $(\bar{p}; V_0 \obot V_1 ; a)$ is in canonical form, and let $\psi$ be the associated warped product decomposition given by \cref{eq:genPsiEqnI}. Denote $\kappa := a^2$ and $\epsilon := \sgn \kappa$. We have two types of warped products:

\begin{description}
	\item[non-null warped decomposition] If $\kappa \neq 0$, let $W_0 := V_0 \cap a^\perp$ and $W_1 := W_0^\perp$.
	
	\item[null warped decomposition] If $\kappa = 0$, then $a$ is lightlike, so fix another lightlike vector $b \in V_0$ such that $\bp{a,b} = 1$, let $W_0 := V_0 \cap \spa{a,b}^\perp$ and $W_1 := V_1$.
\end{description}

For $i = 0,1$, let $P_i : \eunn \rightarrow W_{i}$ be the orthogonal projection. Then the following holds:

\begin{theorem}[Standard Warped Products in $\eunn$ \cite{Nolker1996}] \label{thm:WPDecompsI}
	Let $\psi$ be the warped product decomposition of $\eunn$ determined by the initial data $(\bar{p}; V_0 \obot V_1 ; a)$ given above. Then $N_0$ has the following form:
	
	\begin{equation}
		N_{0} = \{p \in V_{0} | \bp{a, p} > 0  \}
	\end{equation}
	
	and
	
	\begin{equation}
		\rho : \begin{cases}
			N_{0} & \rightarrow \R_{+} \\
			p_{0} & \mapsto \bp{a, p_{0}}
		\end{cases}
	\end{equation}
	
	 The map $\psi$ is an isometry onto the following set:
		
	\begin{equation}
		\Ima(\psi) = \begin{cases}
		 \{ p \in \eunn \: | \: \sgn (P_{1}p)^{2} = \epsilon \} & \text{non-null case} \\
		 \{ p \in \eunn \: | \: \bp{a,p} > 0 \} & \text{null case}
		\end{cases}
	\end{equation}
	
	Furthermore, the following equation holds:
	
	\begin{equation} \label{eq:wpdecpSpI}
		\psi(p_{0},p_{1})^{2} = p_{0}^{2}
	\end{equation}
\end{theorem}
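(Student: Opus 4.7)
The plan is to unpack the general construction preceding \cref{eq:genPsiEqnI} in the special case $k=1$, $\kappa = 0$, with the initial data in canonical form, and to verify each of the four assertions (the formulas for $\rho$ and $N_{0}$, the image, and \eqref{eq:wpdecpSpI}) in turn.

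First, since $\bp{\bar{p}, a} = 1$ by the canonical form assumption, the general definition $\rho(p_{0}) = 1 + \bp{a, p_{0} - \bar{p}}$ immediately reduces to $\rho(p_{0}) = \bp{a, p_{0}}$. Next, because the ambient space is $\eunn$ (so $\kappa = 0$), the sphere determined by $(\bar{p}, V_{0}, \kappa \bar{p}) = (\bar{p}, V_{0}, 0)$ falls under case (a) of \cref{thm:spherSubI} and equals $\bar{p} + V_{0} = V_{0}$ (using $\bar{p} \in V_{0}$). Intersecting with the locus $\rho > 0$ yields $N_{0} = \{p \in V_{0} \mid \bp{a, p} > 0\}$.

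For \eqref{eq:wpdecpSpI} and the image description, I would split into the two cases and substitute the explicit parametrisation of $N_{1}$ from \cref{thm:spherSubI}. In the \textbf{non-null case} ($\kappa := a^{2} \neq 0$), any $p_{1} \in N_{1}$ has the form $p_{1} - \bar{p} = (\alpha - 1/\kappa)a + w$ with $w \in V_{1}$ and $\alpha^{2}\kappa + w^{2} = 1/\kappa$. A short expansion using $p_{0} \perp V_{1}$, $a \in V_{0}$, and $a^{2} = \kappa$ shows
\begin{equation*}
	(p_{1} - \bar{p})^{2} = -2(\alpha - 1/\kappa), \qquad \bp{p_{0}, p_{1} - \bar{p}} = (\alpha - 1/\kappa)\rho,
\end{equation*}
from which $\psi(p_{0}, p_{1})^{2} = p_{0}^{2} + 2\rho^{2}(\alpha - 1/\kappa) - 2\rho^{2}(\alpha - 1/\kappa) = p_{0}^{2}$. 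For the image, decompose $p_{0} = p_{0}' + (\rho/\kappa)a$ with $p_{0}' \in W_{0}$; then $\psi(p_{0}, p_{1}) = p_{0}' + \rho\alpha a + \rho w$ has $W_{0}$-part $p_{0}'$ and $W_{1}$-part $\rho\alpha a + \rho w$, whose norm squared is $\rho^{2}(\alpha^{2}\kappa + w^{2}) = \rho^{2}/\kappa$, so $\sgn (P_{1}\psi)^{2} = \epsilon$. Conversely, given $p$ with $\sgn (P_{1}p)^{2} = \epsilon$, set $\rho := \sqrt{\kappa (P_{1}p)^{2}} > 0$, write $P_{1}p = \gamma a + w$ and $P_{0}p = u \in W_{0}$, and check that $p_{0} := u + (\rho/\kappa)a$ and $p_{1} := \bar{p} + (\gamma/\rho - 1/\kappa)a + w/\rho$ satisfy $p_{0} \in N_{0}$, $p_{1} \in N_{1}$, and $\psi(p_{0}, p_{1}) = p$.

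In the \textbf{null case} ($a^{2} = 0$), case (d) of \cref{thm:spherSubI} gives $p_{1} - \bar{p} = v - \tfrac{1}{2}v^{2}a$ for some $v \in V_{1} = W_{1}$. Using $a \perp V_{1}$, $a^{2} = 0$, and $p_{0} \in V_{0}$, one computes $(p_{1}-\bar{p})^{2} = v^{2}$ and $\bp{p_{0}, p_{1}-\bar{p}} = -\tfrac{v^{2}}{2}\rho$, whence $\psi(p_{0}, p_{1})^{2} = p_{0}^{2} - \rho^{2}v^{2} + \rho^{2}v^{2} = p_{0}^{2}$, and $\bp{a, \psi(p_{0}, p_{1})} = \bp{a, p_{0}} = \rho > 0$. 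The converse is the inverse construction: given $p$ with $\bp{a, p} > 0$, set $\rho := \bp{a, p}$, $v := P_{1}p / \rho$, and $p_{0} := P_{0}p + (\rho v^{2}/2)a$, which lie in $N_{0}$ and $N_{1}$ respectively and satisfy $\psi(p_{0}, p_{1}) = p$.

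The only real obstacle is careful book-keeping in the non-null case, where the decomposition $\eunn = V_{0} \obot V_{1}$ used to build $\psi$ must be reconciled with the distinct decomposition $\eunn = W_{0} \obot W_{1} = W_{0} \obot (\R a \obot V_{1})$ used to state the image condition; identifying $(P_{1}\psi(p_{0}, p_{1}))^{2}$ with $\rho^{2}(p_{1} - c)^{2} = \rho^{2}/\kappa$ is the key bridge. The null case is essentially parallel but uses $W_{1} = V_{1}$ directly and relies on the parabolic-embedding identity $\bp{v - \tfrac{1}{2}v^{2}a, v - \tfrac{1}{2}v^{2}a} = v^{2}$.
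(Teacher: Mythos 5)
Your verification of the formulas for $\rho$, $N_0$, the identity $\psi(p_0,p_1)^2 = p_0^2$, and the image set is correct, and the route (direct substitution of the explicit sphere parametrisations from \cref{thm:spherSubI} in canonical form, rather than the paper's general adapted-basis expansion for arbitrary base point and arbitrary $k$ followed by specialisation in \cref{cor:WPconForm}) is legitimate. But there is a genuine gap: you never verify the word ``isometry'' in the claim ``$\psi$ is an isometry onto the following set.'' Your argument establishes that $\psi$ is a bijection onto $\Ima(\psi)$, but not that it pulls back the flat metric of $\eunn$ to the warped product metric $g_0 + \rho^2 g_1$ on $N_0 \times_\rho N_1$. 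This is the most substantive part of the statement and of the paper's proof of \cref{thm:WPDecomps}: one must compute $\psi_* v$ for $v = (v_0,v_1)$, use $\bp{p_1 - c, v_1} = 0$ (non-null case) or the fact that $P_1 : T_{p_1}N_1 \to V_1$ is an isometry (null case), and check $(\psi_* v)^2 = v_0^2 + \rho(p_0)^2 v_1^2$. Nothing in your bijectivity and norm computations substitutes for this; the identity $\psi(p_0,p_1)^2 = p_0^2$ constrains only the radial behaviour of $\psi$, not its differential.

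A secondary, fixable error: in the null-case inverse you set $p_0 := P_0 p + (\rho v^2/2)a$ with $P_0$ the projection onto $W_0 = V_0 \cap \spa{a,b}^\perp$, which discards the $\spa{a,b}$-components of $p$; with this $p_0$ one gets $\psi(p_0,p_1) = P_0 p + P_1 p \neq p$ and moreover $\bp{a,p_0} = 0$, so $p_0 \notin N_0$. The correct inverse (cf.\ \cref{eq:nullDecompInv} specialised to canonical form) is $p_0 = P_0 p + \bigl(\bp{b,p} + \tfrac{1}{2\bp{a,p}}(P_1 p)^2\bigr)a + \bp{a,p}\,b$, which does lie in $N_0$ since $\bp{a,p_0} = \bp{a,p} > 0$. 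The non-null inverse you give is correct.
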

\begin{proof}
	See \cref{cor:WPconForm}.
\end{proof}

In fact, for $(p_{0},p_{1}) \in N_{0} \times N_{1}$, $\psi$ has one of the following forms, first if $\psi$ is non-null:

\begin{equation} \label{eq:WPDecNnI}
		\psi(p_{0},p_{1}) = P_{0}p_{0} + \bp{a, p_{0}} (p_{1}-c)
\end{equation}

where $c = \bar{p} - \frac{a}{a^2}$, and if $\psi$ is null:

\begin{equation} \label{eq:WPDecNI}
		\psi(p_{0},p_{1}) = P_{0}p_{0} + (\bp{b, p_{0}} - \frac{1}{2}\bp{a, p_{0}}(P_{1}p_1)^{2})a + \bp{a,p_{0}}b + \bp{a, p_{0}} P_{1}p_1
\end{equation}

The above forms are obtained from the equation for $\psi$ from the above theorem by expanding $p_0$ in an appropriate basis. We note that the warped products with multiple spherical factors can be obtained using the standard ones described above. Indeed, suppose $\phi_1 : N_0' \times_{\rho_1} N_1 \rightarrow \eunn$ is the warped product decomposition determined by $(\bar{p}; V_0 \obot V_1 ; a_1)$ as above. Since $V_0$ is pseudo-Euclidean, consider a warped product decomposition, $\phi_2 : \tilde{N}_0 \times_{\rho_2} N_2 \rightarrow V_0$, determined by $(\bar{p}; \tilde{V}_0 \obot \tilde{V}_1 ; a_2)$ with $V_0 \cap W_0^\perp \subset \tilde{W}_0$ (hence $a_1 \in \tilde{W}_0$). Note that $\tilde{W}_0$ is the subspace $W_0$ from the above construction for $\phi_2$. Let $N_0 := N_0' \cap \tilde{N}_0$, then one can check that the map $\psi$ defined by:

\begin{equation}
	\psi : \begin{cases}
		N_{0} \times_{\rho_1} N_1 \times_{\rho_2} N_2 & \rightarrow \eunn \\
		(p_0 , p_1 , p_2) & \mapsto \phi_1(\phi_2(p_0, p_2), p_1)
	\end{cases}
\end{equation}

\noindent is a warped product decomposition of $\eunn$ satisfying \cref{eq:genPsiEqnI}. We illustrate this construction with an example.

\begin{example}[Constructing multiply warped products]
	Suppose $\phi_1$ and $\phi_2$ are given as follows:
	
	\begin{align}
		\phi_1(p_{0}',p_{1}) & = P_{0}'p_{0}' + \bp{a_1, p_{0}'} (p_{1}-c_1) \\
		\phi_2(\tilde{p}_{0},p_{2}) & = \tilde{P}_{0}\tilde{p}_{0} + \bp{a_2, \tilde{p}_{0}} (p_{2}-c_2)
	\end{align}
	
	Now observe that $\rho_1(\phi_2(\tilde{p}_{0},p_{2})) = \rho_1(\tilde{p}_{0})$, which follows from the above equation for $\phi_2$ and the fact that $a_1 \in \tilde{W}_0$. Then, 

	\begin{align}
		\psi(p_0 , p_1 , p_2) & = \phi_1(\phi_2(p_0, p_2), p_1) \\
		& = P_{0}'\phi_2(p_0, p_2) + \bp{a_1, \phi_2(p_0, p_2)} (p_{1}-c_1) \\
		& = P_{0}'\tilde{P}_{0}p_0 + \bp{a_2, p_{0}} (p_{2}-c_2) + \bp{a_1, p_0} (p_{1}-c_1)
	\end{align}
	
	\noindent where $P_{0}'\tilde{P}_{0}$ is the orthogonal projector onto $\tilde{W}_0 \cap W_0 = \tilde{V}_0 \cap \spa{a_1,a_2}^\perp$. A similar calculation shows that $\psi$ satisfies \cref{eq:genPsiEqnI}, since $\phi_1$ and $\phi_2$ each satisfy it.
\end{example}

This procedure can be repeated as many times as necessary to obtain the more general warped products given by \cref{eq:genPsiEqnI}. Hence the properties of the more general warped product decompositions of $\eunn$ can be deduced from \cref{thm:WPDecompsI}. 

The following proposition shows that any proper warped product decomposition of $\eunn$ in canonical form restricts to a warped product decomposition of $\eunn(\kappa)$ where $\kappa \neq 0$. Its proof is straightforward consequence of \cref{eq:wpdecpSpI}.

\begin{theorem}[Restricting Warped products to $\eunn(\kappa)$] \label{thm:eunnKapRestWPI}
	Let $\psi$ be a proper warped product decomposition of $\eunn$ associated with $(\bar{p}; \bigobot\limits_{i=0}^{k} V_{i}; a_{1},...,a_{k})$ in canonical form. Suppose $\kappa^{-1} := \bar{p}^{2} \neq 0$ and let $N' := N_{0}(\kappa) \times_{\rho_{1}} N_{1} \times \cdots \times_{\rho_{k}} N_{k}$. Then $\phi : N' \rightarrow \eunn(\kappa)$ defined by $\phi := \psi|_{N'}$ is a warped product decomposition of $\eunn(\kappa)$ passing through $\bar{p}$.
\end{theorem}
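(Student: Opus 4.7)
The plan is to reduce the multi-factor statement to the single-factor identity in \cref{eq:wpdecpSpI}, combined with the observation that the warped-product metric behaves well under restriction of the base from $N_0$ to the central hyperquadric $N_0(\kappa) = N_0 \cap V_0(\kappa)$. First I would extend \cref{eq:wpdecpSpI} to the multi-factor case: for every $(p_{0},\dotsc,p_{k}) \in N_{0} \times N_{1} \times \cdots \times N_{k}$ one has $\psi(p_{0},\dotsc,p_{k})^{2} = p_{0}^{2}$. This follows by induction on $k$ using the iterative construction of $\psi$ from standard two-factor decompositions $\phi_{1}, \phi_{2},\dotsc$ described in the example preceding the theorem: at each step, applying \cref{eq:wpdecpSpI} to the outermost standard decomposition shows that its squared norm equals the squared norm of the inner base coordinate, which itself equals $p_{0}^{2}$ by the inductive hypothesis. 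Consequently, if $p_{0} \in N_{0}(\kappa)$, i.e. $p_{0}^{2} = \kappa^{-1}$, then $\phi(p_{0},\dotsc,p_{k})^{2} = \kappa^{-1}$, so $\phi$ indeed maps $N'$ into $\eunn(\kappa)$.

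Next I would verify that $\phi$ is an isometry onto an open subset of $\eunn(\kappa)$. The key observation is that the warped-product metric on $N'$ is precisely the restriction of the warped-product metric on $N_{0} \times_{\rho_{1}} N_{1} \times \cdots \times_{\rho_{k}} N_{k}$: the base metric on $N_{0}(\kappa)$ is by definition the metric on $N_{0}$ induced onto the hyperquadric, and the warping functions $\rho_{i}$ are defined on all of $N_{0}$ and remain positive on $N_{0}(\kappa)$. Since $\psi$ is an isometry and $\eunn(\kappa)$ carries the submanifold metric from $\eunn$, the restriction $\phi = \psi|_{N'}$ is a local isometry into $\eunn(\kappa)$. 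A dimension count gives $\dim N' = (\dim N_{0} - 1) + \sum_{i=1}^{k} \dim N_{i} = n - 1 = \dim \eunn(\kappa)$, so $\phi$ is a local diffeomorphism by the inverse function theorem, in particular an open map. Injectivity of $\phi$ is inherited from that of $\psi$.

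Finally, $\bar{p} \in N_{0}(\kappa)$ because $\bar{p} \in V_{0}$ (canonical form) and $\bar{p}^{2} = \kappa^{-1}$, and by the property $\psi(\bar{p},\dotsc,p_{i},\bar{p},\dotsc,\bar{p}) = p_{i}$ applied with $p_{i} = \bar{p}$ we get $\phi(\bar{p},\dotsc,\bar{p}) = \bar{p}$, so $\phi$ passes through $\bar{p}$. The only substantive point is the induction underlying the multi-factor version of \cref{eq:wpdecpSpI}; once that is established, the remainder amounts to careful bookkeeping of which metrics restrict to which, so I do not expect any real obstacle.
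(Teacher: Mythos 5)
Your proposal is correct and follows essentially the same route as the paper: the proof of \cref{thm:eunnKapRestWP} likewise deduces from the identity $\psi(p_0,\dotsc,p_k)^2 = p_0^2$ that $\phi$ is a diffeomorphism onto a subset of $\eunn(\kappa)$, and then observes that the restriction of the warped product metric to $N'$ is still a warped product metric. The only cosmetic difference is that you obtain the multi-factor norm identity by induction on the iterated two-factor construction, whereas the paper already states and proves it for arbitrarily many factors in \cref{cor:WPconForm} (\cref{eq:wpdecpSp}), so no induction is needed there.
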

\begin{proof}
	See \cref{thm:eunnKapRestWP}.
\end{proof}

Hence the details of warped product decompositions of $\eunn(\kappa)$ can be deduced from \cref{thm:WPDecompsI}. More information on these decompositions can be found in the following sections. In particular, see \cref{thm:wpInEunn,thm:wpInEunnKap}. Some examples can be found in \cite{Nolker1996} and also in a future article where we apply these results to construct coordinates which separate the Hamilton-Jacobi equation.

\section{pseudo-Riemannian Submanifolds and Foliations} \label{sec:pRiemSubFol}

In this section we will summarize the theory of pseudo-Riemannian submanifolds and foliations that will be useful to us. We can conveniently treat this as a special case of the theory of pseudo-Riemannian distributions, so we will present this first. For more details on pseudo-Riemannian submanifolds see (for example) \cite{barrett1983semi,Lee1997}. Similarly for pseudo-Riemannian foliations see \cite{rovenskii1998foliations,tondeur1988foliations}.

\subsection{Brief outline of The Theory of Pseudo-Riemannian Distributions }

The following brief exposition of the theory of pseudo-Riemannian distributions is a combination of that given in \cite{Meumertzheim1999} and \cite{Coll2006}. Suppose $E$ is an m-dimensional non-degenerate distribution defined on a pseudo-Riemannian manifold $\bar{M}$. Then we use the orthogonal splitting $T \bar{M} = E \obot E^{\perp}$, $V = V^E + V^{E^{\perp}}$, to define a tensor $s^E : T \bar{M} \times E \rightarrow E^{\perp}$ and a linear connection $\nabla^E$ for $E$ by:

\begin{equation}
	\nabla_XY = \nabla^E_XY + s^E(X,Y)
\end{equation}

\noindent for all $X \in \ve(\bar{M})$ and $Y \in \Gamma(E)$. $s^E$ is called the \emph{generalized second fundamental form} of $E$ and the above equation is referred to as the \emph{Gauss equation}. One can also check that $\nabla^E$ is metric compatible, i.e. $X\bp{Y,Z} = \bp{\nabla^E_XY,Z} + \bp{Y,\nabla^E_XZ}$ for all  $X \in \ve(\bar{M})$ and $Y,Z \in \Gamma(E)$.

For the remainder of the discussion we set $s^E := s^E|_{(E\times E)}$. For $X,Y \in \Gamma(E)$, we can further decompose $s^E(X,Y)$ into its anti-symmetric and symmetric parts 

\begin{align}
	s^E(X,Y) & = (\nabla_XY)^{E^{\perp}} = \frac{1}{2}(\nabla_XY + \nabla_YX)^{E^{\perp}} + \frac{1}{2}(\nabla_XY - \nabla_YX)^{E^{\perp}} \\
	& = h^E(X,Y) + A^E(X,Y)  \\
	A^E(X,Y) & := \frac{1}{2}(\nabla_XY - \nabla_YX)^{E^{\perp}} \\
	h^E(X,Y) & := \frac{1}{2}(\nabla_XY + \nabla_YX)^{E^{\perp}}
\end{align}

Since $\nabla$ is torsion-free, $A^E(X,Y) = \frac{1}{2}([X,Y])^{E^{\perp}}$, hence $E$ is integrable iff $A^E \equiv 0$. $h^E$ is called the \emph{second fundamental form} of $E$. The second fundamental form can be decomposed in terms of its trace to get a further classification of $E$ as follows:

\begin{align}
	h^E(X,Y) & = \bp{X,Y} H_E + h^E_T(X,Y) \\
	 H_E & = \frac{1}{m} \tr{h^E}
\end{align}

\noindent where $h^E_T$ is trace-less. $H_E$ is called the \emph{mean curvature normal} of $E$. $E$ is called \emph{minimal}, \emph{umbilical} or \emph{geodesic}\footnote{Note that some authors use the name auto-parallel instead \cite{Meumertzheim1999}.} if $s^E(X,Y) = h^E_T(X,Y)$, $s^E(X,Y) = \bp{X,Y} H_E$ or $s^E(X,Y) = 0$ respectively for all $X,Y \in \Gamma(E)$. We add the qualification ``almost'' to the three definitions above by replacing $s^E$ with $h^E$; this just drops the requirement that $A^E \equiv 0$. For example $E$ is \emph{almost umbilical} iff $h^E_T = 0$. We remark that when $E$ is one dimensional $h^E_T = 0$ trivially, hence all one dimensional non-degenerate foliations and all one dimensional pseudo-Riemannian submanifolds are trivially umbilical. If $E$ is umbilical and $\nabla_X^{E^{\perp}}H_E = 0$ for all $X \in \Gamma(E)$ then $E$ is called \emph{spherical}. Finally if $E$ is spherical and $E^{\perp}$ is geodesic then $E$ is called \emph{Killing}.

We also note here that $s^E$ and $s^{E^{\perp}}$ are not independent of each other:

\begin{proposition} \label{prop:sEsEperp}
	For $X,Y \in \Gamma(E)$ and $Z \in \Gamma(E^{\perp})$, the following holds:
	\begin{equation}
		\scalprod{s^E(X,Y)}{Z} = - \scalprod{Y}{s^{E^{\perp}}(X,Z)}
	\end{equation}
\end{proposition}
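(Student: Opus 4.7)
The plan is to derive the identity from the metric compatibility of the ambient Levi-Civita connection $\nabla$ applied to the obvious orthogonality relation $\scalprod{Y}{Z} = 0$.

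First I would observe that since $Y \in \Gamma(E)$ and $Z \in \Gamma(E^{\perp})$ and $E$ is non-degenerate (so the splitting $T\bar M = E \obot E^{\perp}$ is orthogonal), the function $\scalprod{Y}{Z}$ vanishes identically on $\bar M$. Differentiating along $X$ and using the fact that $\nabla$ is metric compatible yields
\begin{equation}
0 = X\scalprod{Y}{Z} = \scalprod{\nabla_X Y}{Z} + \scalprod{Y}{\nabla_X Z}.
\end{equation}

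Next I would plug in the Gauss equations for the two distributions. For $Y \in \Gamma(E)$ the decomposition is $\nabla_X Y = \nabla^E_X Y + s^E(X,Y)$ with $\nabla^E_X Y \in \Gamma(E)$ and $s^E(X,Y) \in \Gamma(E^{\perp})$, so orthogonality of the splitting gives $\scalprod{\nabla_X Y}{Z} = \scalprod{s^E(X,Y)}{Z}$. Symmetrically, applying the Gauss equation to $E^{\perp}$ gives $\nabla_X Z = \nabla^{E^{\perp}}_X Z + s^{E^{\perp}}(X,Z)$ with $\nabla^{E^{\perp}}_X Z \in \Gamma(E^{\perp})$ and $s^{E^{\perp}}(X,Z) \in \Gamma(E)$, so $\scalprod{Y}{\nabla_X Z} = \scalprod{Y}{s^{E^{\perp}}(X,Z)}$. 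Substituting both identities into the displayed equation above yields the claim.

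There is really no hard step here: once one recognizes that $\scalprod{Y}{Z} = 0$ and that the components of $\nabla_X Y$ and $\nabla_X Z$ along the ``wrong'' factor are killed by the pairing, the identity drops out. The only tacit point worth flagging is that $s^{E^{\perp}}$ is being used on an argument $X \in \Gamma(E)$ (not $\Gamma(E^{\perp})$), so one should read $s^{E^{\perp}}$ as the original $T\bar M \times E^{\perp} \to E$ tensor defined just before the restriction convention, rather than its restriction to $E^{\perp} \times E^{\perp}$.
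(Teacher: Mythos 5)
Your proof is correct and is essentially identical to the paper's: both differentiate $\scalprod{Y}{Z}=0$ along $X$ using metric compatibility and then read off the $E^{\perp}$- and $E$-components via the Gauss equations for $E$ and $E^{\perp}$. Your remark that $s^{E^{\perp}}$ must here be taken as the unrestricted tensor $T\bar M \times E^{\perp} \to E$ is a correct reading of the paper's conventions.
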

\begin{proof}
	\begin{align}
		0 & = \nabla_X\bp{Y,Z} \\
		  & = \bp{\nabla_XY,Z} + \bp{Y,\nabla_XZ} \\
		  & = \scalprod{s^E(X,Y)}{Z} + \scalprod{Y}{s^{E^{\perp}}(X,Z)}
	\end{align}
\end{proof}

\subsection{Specialization to pseudo-Riemannian submanifolds}

Suppose $\phi : M \rightarrow \bar{M}$ is a local embedding of (a pseudo-Riemannian submanifold) $M^m$ inside $\bar{M}^n$. Then for any point $p \in M$, it is known that there exist local coordinates $(x^i)$ on $\bar{M}$, such that the subset
\begin{equation}
	\{(x^1,\dotsc,x^m,x^{m+1}, \dotsc,x^n) : x^{m+1}=c_{m+1},\dotsc,x^n=c_n\}
\end{equation}
for some $c_{m+1},\dotsc,c_n \in \R$ can be identified with $\phi(U)$ where $U$ is an open subset with $p \in U \subseteq M$. These coordinates induce a local foliation $L$ in a neighborhood of $p$, with $M$ being a leaf given by the above equation. We will refer to such a foliation as a (local) foliation of $\bar{M}$ associated with $M$. Now suppose $L$ is an arbitrary foliation of $\bar{M}$ associated with $M$, and let $E$ be the induced distribution. Locally we can assume $L$ is a foliation by pseudo-Riemannian submanifolds of $\bar{M}$, hence $E$ is non-degenerate and the discussion in the previous section applies to it. Since $E$ is integrable, it follows that for any $X,Y \in \Gamma(E)$, that $[X,Y] \in \Gamma(E)$. Throughout this discussion, for any $X \in \Gamma(E)$, we let $\tilde{X} \in \ve(M)$ denote the unique vector field such that for any $p \in M$, we have $X_{\phi(p)} = \phi_* \tilde{X}_p$. Then for any $X,Y \in \Gamma(E)$ we see that

\begin{equation}
	[X,Y]|_{\phi(p)} =  \phi_* [\tilde{X},\tilde{Y}]|_p
\end{equation}

Thus $[X,Y]|_{\phi(p)}$ depends only on $[\tilde{X},\tilde{Y}]|_p$ in $M$. 

Now denote by $\nabla$ (resp. $\bar{\nabla}$) the Levi-Civita connection on $M$ (resp. $\bar{M}$). By the uniqueness properties of the Levi-Civita connection on $M$, it follows that for any $X,Y \in \Gamma(E)$ we have for any $p \in M$ that

\begin{equation}
	(\bar{\nabla}_X^E Y)|_{\phi(p)} =  \phi_* (\nabla_{\tilde{X}} \tilde{Y})|_p
\end{equation}

Thus $(\bar{\nabla}_X^E Y)|_{\phi(p)} $ depends only on $(\nabla_{\tilde{X}} \tilde{Y})|_p$ in $M$. By also using the Gauss equation, we observe that for any $p \in M$, that $(\bar{\nabla}_X Y)|_{\phi(p)} $ depends only on $\tilde{X}$ and $\tilde{Y}$.

In consequence of these observations, it follows that the theory presented for pseudo-Riemannian distributions induces a similar one for pseudo-Riemannian manifolds. We now connect this with the standard notations \cite{chen2011pseudo}; in effect this removes the appearance of the extraneous distribution, $E$.

In this case $s^E \equiv h^E$ and $h := (h^E)|_{M}$, then the Gauss equation becomes:

\begin{equation}
	\bar{\nabla}_X Y = \nabla_X Y + h(X,Y)
\end{equation}

\noindent for all $X,Y \in \ve(M)$. We denote the set of normal vector fields over $M$, i.e. the restriction of $\Gamma(E^{\perp})$ to $M$ by $\ve(M)^{\perp}$. The Gauss equation for $E^{\perp}$ is usually called the \emph{Weingarten equation} and is only defined for $X \in \ve(M)$ and $Y \in \ve(M)^{\perp}$. This is because in this case, $\bar{\nabla}_X Y$ depends only on the values that $X$ and $Y$ take on $M$\footnote{This is because for any $p \in \bar{M}$, $(\bar{\nabla}_X Y)|_p$ depends only on the values of $Y$ along any curve tangent to $X_p$. See Lemma~4.8 in \cite{Lee1997} and the following exercise, or Proposition~3.18~(3) in \cite{barrett1983semi}.}. Thus we can let  $A_Y(X) := -s^{E^{\perp}}(X,Y)$, $\nabla^{\perp}_XY := \bar{\nabla}^{E^{\perp}}_X Y$ and the Gauss equation (for $E^\perp$) becomes:

\begin{equation}
	\bar{\nabla}_X Y = \nabla^{\perp}_XY - A_Y(X)
\end{equation}

\noindent for all $X \in \ve(M)$ and $Y \in \ve(M)^{\perp}$. Note that the properties of $\bar{\nabla}^{E^{\perp}}$ imply that $\nabla^{\perp}$ is a connection\footnote{More precisely it satisfies the properties in definition~3.9 in \cite{barrett1983semi} and is metric compatible.} on $\ve(M)^{\perp}$. In this notation, the relationship between $s^E$ and $s^{E^\perp}$ given in \cref{prop:sEsEperp} becomes:

\begin{equation} \label{eq:secFwein}
	\bp{h(X,Y),Z} = \bp{A_Z(X),Y}
\end{equation}

\noindent for all $X,Y \in \ve(M)$ and $Z \in \ve(M)^{\perp}$.

Finally, we note that the definitions of minimal, umbilical, or geodesic foliations induces corresponding definitions for submanifolds. For example, a submanifold is geodesic if its second fundamental form vanishes identically.

In conclusion, we should mention that even though we have given a concise presentation of the theory, it's not useful for practical calculations. For these, one will have to evaluate these quantities in terms of curves on $M$. See for example, Proposition~4.8 in \cite{barrett1983semi}. 

%

\section{Circles and Spherical Submanifolds*} \label{sec:circSphSub}

%

In this section we will briefly overview the theory of circles and spherical submanifolds of pseudo-Riemannian manifolds. Circles are covariantly defined using the Frenet formula, but the definition of a sphere requires more work \cite{Nomizu1973}. This material is not necessary to read the rest of the article, although it gives an application of the general theory presented in the previous section, a geometric interpretation of spherical submanifolds, and gives some background for the results on the intrinsic properties of warped products to come. We also present this theory here because it's not covered in standard references, in contrast with the corresponding theory for geodesic submanifolds (see \cite{barrett1983semi}).

A proper circle\footnote{Sometimes these are called geodesic circles \cite{Aminova2003}. This name emphasizes the fact that we due not require the image of these curves to be a compact set, i.e. homeomorphic to $\Si^1$.} in a pseudo-Riemannian manifold is defined using the Frenet formula as a unit speed curve whose first curvature is constant and non-zero and remaining curvatures vanish. To be precise, let $\gamma(t)$ be a unit speed curve in $M$, i.e. $\dot{\gamma}^2 = \pm 1$. Let $X := \dot{\gamma}$. Let $\kappa(t) := \norm{\nabla_X X}$ be the (first) curvature of $\gamma$. Assuming $\kappa \neq 0$, we define $Y$ to be the unit vector field over $\gamma$ derived from $\nabla_X X$. I.e. $Y$ satisfies the following equation

\begin{equation}
	\nabla_X X = \kappa Y
\end{equation}

A proper circle is defined to be a curve which satisfies $\nabla_XY = c X$ for some $c \in \R \setminus \{0\}$. We observe that

\begin{align}
	\bp{\nabla_XY , X} & = - \bp{Y , \nabla_XX} \\
	& = -\kappa \bp{Y , Y}
\end{align}

The above equation implies that $c = -\varepsilon_0\varepsilon_1 \kappa$ where $\varepsilon_0 := \sgn \bp{X,X}$ and $\varepsilon_1 := \sgn \bp{Y,Y}$. Thus a proper circle is defined by the equations

\begin{align}
	\nabla_X X & = \kappa Y \\
	\nabla_XY & = - \varepsilon_0\varepsilon_1\kappa X
\end{align}

\noindent where $\kappa \neq 0$ is a constant. A proper circle satisfies the following third order ODE \cite{Abe1990}:

\begin{equation} \label{eq:circ}
	\nabla_X \nabla_X X = - \bp{\nabla_X X,\nabla_X X} \bp{X,X} X
\end{equation}

Conversely we will see shortly that any unit speed curve satisfying the above equation with $\bp{\nabla_X X,\nabla_X X} \neq 0$ is a proper circle. We define a \emph{circle} in a pseudo-Riemannian manifold to be a unit speed curve satisfying the above equation, hereafter called the \emph{circle equation}. The following lemma shows that any pseudo-Riemannian manifold admits circles:

\begin{lemma}[Existence and Uniqueness of Circles \cite{Nomizu1974}] \label{lem:circUniq}
	Consider the following initial conditions: $p \in M$, a unit vector $X_p \in T_p M$ and $Y_p \in X_p^{\perp}$. There exists a unique locally defined unit speed curve $\gamma(t)$ in $M$ satisfying \cref{eq:circ} and the initial conditions:
	\begin{align}
		\gamma(0) & = p \\
		\dot{\gamma}(0) & = X_p \\
		(\nabla_XX)|_p & = Y_p
	\end{align}
	
	\noindent where $X := \dot{\gamma}$ and $Y := \nabla_XX$. Furthermore, $\bp{Y,Y}$ is constant along any circle.
\end{lemma}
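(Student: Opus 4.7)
The strategy is to recognize the circle equation as a third-order ODE for $\gamma$, invoke standard ODE theory to get a local curve with the prescribed initial data, and then verify by differentiation that the constraints ``unit speed'' and ``$Y \perp X$'' are automatically preserved — which will simultaneously yield the constancy of $\langle Y,Y \rangle$.

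First I would translate the equation into coordinates. Picking local coordinates $(x^i)$ around $p$, the expression $\nabla_X\nabla_X X$ becomes a polynomial in $\dot\gamma^i, \ddot\gamma^i, \dddot\gamma^i$ and the Christoffel symbols, and the right-hand side $-\langle \nabla_X X, \nabla_X X\rangle \langle X,X\rangle X$ is a polynomial in $\dot\gamma^i, \ddot\gamma^i$ and the metric coefficients. Solving for $\dddot\gamma^k$ yields a smooth third-order ODE; introducing the auxiliary variables $v^i := \dot\gamma^i$ and $w^i := \ddot\gamma^i$ converts it into a smooth first-order system on an open subset of $\R^{3n}$. The initial conditions $\gamma(0) = p$, $\dot\gamma(0) = X_p$, $(\nabla_XX)|_p = Y_p$ translate into initial values for $(\gamma, v, w)$ (using $Y_p = (\nabla_XX)|_p$ to solve for $w^k(0)$). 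The standard existence and uniqueness theorem for ODEs then produces a unique local solution $\gamma(t)$, establishing the first assertion.

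Next I would check that this solution is actually unit speed and that $Y = \nabla_X X$ satisfies $\langle Y,Y\rangle = \langle Y_p,Y_p\rangle$. Set
\begin{equation}
    f(t) := \langle X,X\rangle, \qquad g(t) := \langle X,Y\rangle, \qquad h(t) := \langle Y,Y\rangle.
\end{equation}
Differentiating along $\gamma$ and using only the metric compatibility of $\nabla$ and the circle equation $\nabla_X Y = -hf X$ gives
\begin{align}
    f' &= 2g, \\
    g' &= h + \langle X, -hfX\rangle = h(1-f^2), \\
    h' &= 2\langle \nabla_XY, Y\rangle = -2hf g.
\end{align}
The initial conditions satisfy $f(0) = \varepsilon_0 = \pm 1$, $g(0) = \langle X_p, Y_p\rangle = 0$, and $h(0) = \langle Y_p, Y_p\rangle$. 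The constant triple $(f,g,h) \equiv (\varepsilon_0, 0, \langle Y_p,Y_p\rangle)$ clearly solves this system with these initial data (since $1 - f^2 \equiv 0$ and $g \equiv 0$), so by uniqueness it is \emph{the} solution. In particular $\gamma$ is unit speed, $Y\perp X$, and $\langle Y,Y\rangle$ is constant along $\gamma$, completing the proof.

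The only subtle point is the verification that the circle equation is genuinely a third-order ODE in normal form, i.e. that we can solve for the highest derivative — but this is immediate because the top-order term $\nabla_X\nabla_X X$ contributes $\dddot\gamma^k$ with coefficient $1$, so no implicit function manoeuvre is required. The rest is a routine bookkeeping computation with the product rule and the definition $Y = \nabla_XX$.
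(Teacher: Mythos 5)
Your proposal is correct and follows essentially the same route as the paper: existence and uniqueness from standard ODE theory for the third-order circle equation, followed by a conserved-quantity argument showing that $(\bp{X,X},\bp{X,Y},\bp{Y,Y})$ satisfies a first-order system whose unique solution with the given initial data is constant. The only cosmetic difference is that you close the system by including $h'=-2hfg$ and handle all three quantities in one uniqueness argument, whereas the paper first solves the $(f,g)$ subsystem (with $h$ as a coefficient) and then deduces $h'=0$ separately; both are valid.
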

\begin{proof}
	It follows by the existence and uniqueness theorem for ODEs that there exists a unique locally defined curve $\gamma(t)$ satisfying \cref{eq:circ} with the above initial conditions. Then observe the following:
	
	\begin{align}
		\nabla_X \bp{X,X} & = 2 \bp{X,\nabla_XX} = \bp{X,Y} \\
		\nabla_X \bp{X,Y} & = \bp{Y,Y} + \bp{X,\nabla_X Y} \\
		& \overset{\eqref{eq:circ}}{=} \bp{Y,Y} - \bp{X,X}^2 \bp{Y,Y} \\
		& = \bp{Y,Y}(\bp{X,X}^2 - 1)
	\end{align}
	
	The above two equations define a system of ODEs for $\bp{X,X}$ and $\bp{X,Y}$, with initial values $\bp{X,X}|_p = \varepsilon = \pm 1$ and $\bp{X,Y}|_p = 0$. Thus by the uniqueness of the solutions, it follows that $\bp{X,X} = \varepsilon$ and $\bp{X,Y} = 0$ wherever $\gamma$ is defined. Hence $\gamma$ is a unit speed curve.
	
	Finally observe that
	
	\begin{align}
		\nabla_X\bp{Y,Y} & = 2 \bp{\nabla_XY, Y}  \\
		& \overset{\eqref{eq:circ}}{=} - 2 \bp{\nabla_X X,\nabla_X X} \bp{X,X} \bp{X,Y} \\
		& = 0
	\end{align}
	
	Hence $\bp{Y,Y}$ is constant.
\end{proof}

Note that $k := \norm{Y}$ in the above lemma is usually called the \emph{curvature} of the circle. In Riemannian manifolds, circles are completely classified by their curvature, although this is not true for pseudo-Riemannian manifolds. Using the above lemma we can classify circles in a pseudo-Riemannian manifold as follows. Let $\gamma(t)$ be a circle in $M$ and suppose $\gamma$ satisfies the initial conditions of the above lemma. Then $\gamma$ can be classified as follows depending on $Y_p$:

\begin{description}
	\item[Geodesic: ] If $Y_p= 0$.
	\item[Proper Circle: ] If $\bp{Y,Y}|_p \neq 0$.
	\item[Null Circle: ] If $\bp{Y,Y}|_p = 0$ but $Y_p \neq 0$, i.e. $Y_p$ is lightlike, hence \cref{eq:circ} reduces to $\nabla_X\nabla_X X = 0$.
\end{description}

Note that this classification is well defined globally since $\bp{Y,Y}$ is a constant of a circle and the uniqueness theorem for ODEs forces any circle with $Y_p = 0$ to be a geodesic.

\begin{example}[Geodesics in Spherical Submanifolds \cite{Kassabov2010}] \label{ex:geoInSph}
	Let $M$ be a spherical submanifold of $\bar{M}$. Suppose $\gamma(t)$ is a unit speed geodesic on $M$. We will show that $\gamma$ is a circle in $\bar{M}$. By the Gauss equation, we have the following:
	
	\begin{equation}
		\bar{\nabla}_X X = \bp{X,X} H
	\end{equation}
	
	Then by the Weingarten equation and using the fact that $\bar{\nabla}^{\perp} H = 0$ where $\bar{\nabla}^{\perp}$ is the induced normal connection over $M$, we have the following:
	
	\begin{align}
		\bar{\nabla}_X \bar{\nabla}_X X & = \bp{X,X} \bar{\nabla}_X H \\
		& = -\bp{X,X} A_H(X) \\
		& = -\bp{X,X} \bp{H,H} X \\
		& = -\bp{X,X}\bp{\bar{\nabla}_X X,\bar{\nabla}_X X} X
	\end{align}
	
	\noindent since for any $Z \in \ve(M)^{\perp}$, $\bp{A_H(X),Z} \overset{\eqref{eq:secFwein}}{=} \bp{h(X,Z),H} = \bp{X,Z} \bp{H,H}$.
\end{example}

We note here that the above example in combination with \cref{lem:circUniq} shows that the mean curvature vector field of a spherical submanifold is locally determined by its value at a single point. In \cref{ex:eunnPropCirc} we will describe the circles in pseudo-Euclidean space after we have described the spherical submanifolds of the space.

We will now present some additional results that show how circles can be used to characterize spherical submanifolds. These results were first obtained for the Riemannian case by \citeauthor{Nomizu1974} in \cite{Nomizu1974}. They were generalized to the Lorentzian case by \citeauthor{Ikawa1985} in \cite{Ikawa1985} and to the pseudo-Riemannian case by \citeauthor{Abe1990} in \cite{Abe1990}.

For the following theorems we denote a pseudo-Riemannian manifold $M$ with signature $\alpha$ by $M_{\alpha}$. The following theorem characterizes spherical submanifolds in terms of circles, it is analogous to the corresponding theorem for geodesics and geodesic submanifolds (see \cite[section~4.4]{barrett1983semi}).

\begin{theorem}[Circles and Spheres \cite{Abe1990}] \label{thm:circNdSph}
	Let $M_{\alpha}$ be an $n$ dimensional pseudo-Riemannian submanifold of $\bar{M}_{\beta}$. For any $\varepsilon_0 \in \{-1,1\}$ and $\varepsilon_1 \in \{-1,0,1\}$ satisfying $2 - 2\alpha \leq \varepsilon_0 + \varepsilon_1 \leq 2 n - 2 \alpha -2$ and $k \in \R^+$, the following are equivalent:
	\begin{enumerate}[(a)]
		\item Every circle in $M_{\alpha}$ with $\bp{X,X} = \varepsilon_0$ and $\bp{\nabla_X X,\nabla_X X} = \varepsilon_1 k^2$ is a circle in $\bar{M}_{\beta}$.
		\item $M_{\alpha}$ is a spherical submanifold of $\bar{M}_{\beta}$.
	\end{enumerate} 
\end{theorem}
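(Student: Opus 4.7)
The plan is to translate the circle equation in $\bar M$ into algebraic identities on the second fundamental form of $M$ via the Gauss and Weingarten equations, handling the two implications separately. Throughout, write $X = \dot\gamma$ for a unit speed circle $\gamma$ in $M$ with $\bp{X,X} = \varepsilon_0$ and $Y_M := \nabla_X X$, so that Gauss gives $\bar\nabla_X X = Y_M + h(X,X)$ and hence $\bp{\bar\nabla_X X,\bar\nabla_X X} = \bp{Y_M,Y_M} + \bp{h(X,X),h(X,X)}$.

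For $(b)\Rightarrow(a)$ I would follow the pattern of \cref{ex:geoInSph}. Umbilicity gives $h(X,X) = \bp{X,X} H$, so a second application of $\bar\nabla_X$ — using Gauss on the tangential summand $Y_M$ and Weingarten on the normal summand $\bp{X,X} H$ — produces tangential pieces $\nabla_X Y_M$ and $-\bp{X,X}^2\bp{H,H} X$ (where $A_H(X) = \bp{H,H} X$ comes from \cref{eq:secFwein} applied to $h(X,W) = \bp{X,W} H$), together with normal pieces $h(X,Y_M)$ and $\bp{X,X}\nabla^\perp_X H$ that vanish respectively because $\bp{X,Y_M} = 0$ (the unit-speed constraint) and $\nabla^\perp H = 0$ (sphericity). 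Invoking the circle equation in $M$ for $\nabla_X Y_M$ then algebraically reassembles the circle equation in $\bar M$.

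For $(a)\Rightarrow(b)$, fix $p\in M$ and prescribe initial data $(X_p, Y_p)$ with $X_p$ unit of sign $\varepsilon_0$ and $Y_p \in X_p^\perp \cap T_pM$ with $\bp{Y_p,Y_p}=\varepsilon_1 k^2$; the arithmetic bounds $2-2\alpha\le \varepsilon_0+\varepsilon_1\le 2n-2\alpha-2$ are precisely what guarantees that such a $Y_p$ exists, since they control the number of directions of each causal type available in $X_p^\perp \cap T_pM$. By \cref{lem:circUniq} a circle $\gamma$ in $M$ launches from these data, and by hypothesis it is also a circle in $\bar M$. Expanding $\bar\nabla_X \bar\nabla_X X = -\bp{\bar\nabla_X X,\bar\nabla_X X}\bp{X,X} X$ via Gauss and Weingarten and projecting onto $TM$ and $(TM)^\perp$ yields, after using the circle equation in $M$ to cancel the $Y_p$-dependence in the tangential part, the two pointwise identities
\begin{align}
A_{h(X_p,X_p)}(X_p) &= \bp{X_p,X_p}\,\bp{h(X_p,X_p),h(X_p,X_p)}\, X_p, \\
(\nabla^\perp h)(X_p,X_p,X_p) + 3\,h(X_p,Y_p) &= 0.
\end{align}

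The first identity is polynomial in $X_p$ and holds on the open cone $\{X\in T_pM : \varepsilon_0 \bp{X,X} > 0\}$, so by analyticity it extends to all of $T_pM$; standard polarisation then forces $h(X,Y) = \bp{X,Y} H_p$ for some $H_p \in (T_pM)^\perp$, i.e.\ $M$ is umbilical. With umbilicity in hand, $h(X_p,Y_p) = \bp{X_p,Y_p} H_p = 0$ since $Y_p \perp X_p$, and $(\nabla^\perp h)(X_p,X_p,X_p) = \bp{X_p,X_p}\nabla^\perp_{X_p} H = \varepsilon_0 \nabla^\perp_{X_p} H$, so the second identity collapses to $\nabla^\perp_{X_p} H = 0$ for every admissible unit $X_p$, and linearity promotes this to $\nabla^\perp H \equiv 0$; thus $M$ is spherical. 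The main obstacle I anticipate is the polarisation step: one must verify that restricting to a single causal class of unit vectors still produces an open subset of $T_pM$ large enough to read off the full umbilicity of $h$ from the quintic identity on $h(X,X)$, which is exactly where the signature bounds on $\varepsilon_0+\varepsilon_1$ become essential.
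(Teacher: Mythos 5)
First, note that the paper itself does not prove this theorem: its proof environment consists solely of the citation ``See \cite{Abe1990}'', so your attempt can only be measured against the standard argument of \citeauthor{Nomizu1974} and \citeauthor{Abe1990}. Your direction $(b)\Rightarrow(a)$ is correct and is the natural extension of \cref{ex:geoInSph} from geodesics to circles. In $(a)\Rightarrow(b)$ the two pointwise identities you derive (tangential and normal components of the ambient circle equation) are also correct. The gap is in how you extract umbilicity. You take it from the tangential identity
\begin{equation}
A_{h(X_p,X_p)}(X_p) = \bp{X_p,X_p}\,\bp{h(X_p,X_p),h(X_p,X_p)}\, X_p
\end{equation}
by claiming it ``holds on the open cone'' and extends by analyticity, after which ``standard polarisation'' gives $h(X,Y)=\bp{X,Y}H_p$. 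Neither step is sound as written: the two sides are homogeneous in $X_p$ of degrees $3$ and $7$ respectively, so the identity holds only on the level set $\bp{X,X}=\varepsilon_0$ and does not propagate to the cone, let alone to all of $T_pM$, by analyticity. Even after correctly homogenizing, what you obtain is a quintic constraint coupling $h$ to its shape operators; reducing that to umbilicity is not a polarisation exercise (it works transparently only in codimension one, where $A_{h(X,X)}(X)=\bp{SX,X}SX$ and the condition forces every unit vector to be an eigenvector of $S$). You correctly sense that something is fragile here, but you locate the difficulty in the wrong identity.

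The repair, which is the route actually taken in \cite{Nomizu1974,Abe1990}, is to use your \emph{second} (normal) identity first, exploiting the freedom in $Y_p$ rather than in $X_p$. Fix a unit $X_p$ of sign $\varepsilon_0$. The term $(\nabla^{\perp}h)(X_p,X_p,X_p)$ is tensorial in $X_p$ and independent of $Y_p$, so the identity forces $h(X_p,Y_p)$ to take the same value for every admissible $Y_p\in X_p^{\perp}\cap T_pM$ with $\bp{Y_p,Y_p}=\varepsilon_1 k^2$. The signature bounds $2-2\alpha\le\varepsilon_0+\varepsilon_1\le 2n-2\alpha-2$ are exactly what guarantees that this admissible set is nonempty and that differences of admissible vectors span $X_p^{\perp}\cap T_pM$ (e.g.\ $\pm Y_p$ are both admissible, so the common value is $0$). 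Hence $h(X_p,\cdot)$ vanishes on $X_p^{\perp}\cap T_pM$, which upon polarising over $X_p$ yields $h(X,Y)=\bp{X,Y}H$. Substituting back, the normal identity collapses to $\bp{X_p,X_p}\nabla^{\perp}_{X_p}H=0$, giving sphericity, and the tangential identity is then satisfied automatically, so it carries no independent information and need not be analysed at all.
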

\begin{proof}
%
%
%
%
%
See \cite{Abe1990}.
\end{proof}

More intuitively, the above theorem states that a spherical submanifold $M$ is precisely a submanifold in which all circles in $M$ are circles in the ambient space. Also note that the above theorem shows that a circle is precisely a spherical submanifold of dimension one. The following theorem is a variant of the above theorem which is known to hold (in full generality) only in the strictly pseudo-Riemannian case.

\begin{theorem}[Circles and Spheres II \cite{Abe1990}] 
	Let $M_{\alpha}$ be an $n$ dimensional ($1 \leq \alpha \leq n - 1$) pseudo-Riemannian submanifold of $\bar{M}_{\alpha}$ having the same signature $\alpha$. For any $\varepsilon_0 \in \{-1,1\}$, the following are equivalent:
	\begin{enumerate}[(a)]
		\item Every geodesic in $M_{\alpha}$ with $\bp{X,X} = \varepsilon_0$ is a circle in $\bar{M}_{\alpha}$.
		\item $M_{\alpha}$ is a spherical submanifold of $\bar{M}_{\alpha}$.
	\end{enumerate}
\end{theorem}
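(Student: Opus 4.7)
The plan is to mirror the strategy of \cref{thm:circNdSph}, restricted to the unit-speed geodesics of causal type $\varepsilon_0$ that are available by hypothesis. For the direction (b) $\Rightarrow$ (a), I would simply invoke \cref{ex:geoInSph}: the computation there verifies the circle equation \cref{eq:circ} in $\bar{M}$ along every unit-speed geodesic of a spherical submanifold, with no assumption on the sign of $\bp{X,X}$.

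For the converse, fix $p \in M$ and an arbitrary unit vector $X \in T_p M$ with $\bp{X,X} = \varepsilon_0$, which exists because $1 \le \alpha \le n-1$. Let $\gamma$ be the geodesic with $\dot\gamma(0) = X$. Since $\nabla_X X = 0$ along $\gamma$, the Gauss equation yields $\bar\nabla_X X = h(X,X)$, and then the Weingarten equation gives
\begin{equation*}
\bar\nabla_X \bar\nabla_X X = \nabla_X^\perp h(X,X) - A_{h(X,X)}(X).
\end{equation*}
Substituting into \cref{eq:circ} for $\bar M$ and separating tangential from normal components produces, at $p$ and for every $X \in T_pM$ with $\bp{X,X} = \varepsilon_0$, the two pointwise identities
\begin{align*}
A_{h(X,X)}(X) &= \varepsilon_0 \bp{h(X,X), h(X,X)}\, X, \\
\nabla_X^\perp h(X,X) &= 0.
\end{align*}

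The hard part is deducing umbilicality of $M$ from the tangential identity. A simplifying feature of the hypothesis that $\bar M$ has the same signature $\alpha$ as $M$ is that the normal bundle $T^\perp M$ has signature $\alpha - \alpha = 0$, i.e.\ is Riemannian; in particular $\bp{h(X,X), h(X,X)} \ge 0$. My approach is first to promote the tangential identity from unit vectors to a polynomial identity on $T_pM$: rescaling $Y \in U_{\varepsilon_0} := \{Y : \sgn\bp{Y,Y} = \varepsilon_0\}$ to the unit vector $X = Y/\sqrt{\varepsilon_0 \bp{Y,Y}}$ and clearing denominators yields
\begin{equation*}
\bp{Y,Y}\, A_{h(Y,Y)}(Y) = \bp{h(Y,Y), h(Y,Y)}\, Y \qquad (Y \in U_{\varepsilon_0}),
\end{equation*}
which, since both sides are polynomial in $Y$ and $U_{\varepsilon_0}$ is open in $T_p M$, extends to all of $T_pM$. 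I would then polarize by varying $X$ inside a mixed-signature $2$-plane $\Pi \subset T_pM$ via a hyperbolic rotation $X(t) = \cosh t\, e_1 + \sinh t\, e_2$ (with $\bp{e_1,e_1} = \varepsilon_0$, $\bp{e_2,e_2} = -\varepsilon_0$, which preserves $\bp{X(t),X(t)} = \varepsilon_0$), match Taylor coefficients in $t$, and use the Weingarten relation \cref{eq:secFwein} to translate shape-operator equations into bilinear identities among $h(e_1,e_1)$, $h(e_2,e_2)$ and $h(e_1,e_2)$. Running this in enough planes $\Pi$, the target is the pointwise umbilical condition $h(Y,Z) = \bp{Y,Z} H$ for a single $H \in T_p^\perp M$. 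This polarization is the main obstacle and is where I expect the bulk of the work to lie.

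Once umbilicality is in hand, $h(X,X) = \bp{X,X} H$, so the normal identity becomes $\bp{X,X}\, \nabla_X^\perp H = 0$. Since $\bp{X,X} = \varepsilon_0 \ne 0$ and $\nabla^\perp H$ is linear in its argument, this forces $\nabla_X^\perp H = 0$ first for every $X \in U_{\varepsilon_0}$ and then for every $X \in T_pM$. Hence $H$ is parallel in the normal bundle, so $M_\alpha$ is spherical, completing the proof.
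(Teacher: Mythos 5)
The paper offers no proof of this theorem (it is quoted from \cite{Abe1990} without argument), so your attempt stands on its own. The direction (b) $\Rightarrow$ (a) via \cref{ex:geoInSph} is fine, as is the reduction of (a) $\Rightarrow$ (b) to the two pointwise identities, the rescaling to the polynomial identity $\bp{Y,Y}\,A_{h(Y,Y)}(Y) = \bp{h(Y,Y),h(Y,Y)}\,Y$ on all of $T_pM$, and the observation that the normal bundle is positive definite. The gap is exactly where you locate it: umbilicality is never actually derived, and the route you sketch (hyperbolic rotations, Taylor coefficients, ``enough planes'') is not shown to close. It cannot close without using the indefiniteness of the induced metric in an essential way, because the corresponding statement is false in the definite case: the Veronese surface in $S^4$ has all of its geodesics circles of the ambient sphere, has Riemannian normal bundle, yet is not umbilical. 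Your only stated use of $1 \leq \alpha \leq n-1$ is the existence of unit vectors of causal type $\varepsilon_0$, which already holds for one choice of $\varepsilon_0$ when $\alpha = 0$; so whatever you extract by polarizing among non-null directions (isotropy-type identities such as $\bp{h(Y,Y),h(Y,Y)} = \mu \bp{Y,Y}^2$) cannot suffice.

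The missing idea is a one-line exploitation of the polynomial identity you already established: evaluate it at a null vector $Y \neq 0$, which exists precisely because $1 \leq \alpha \leq n-1$. The left side vanishes, giving $\bp{h(Y,Y),h(Y,Y)} = 0$, and positive definiteness of the normal metric then forces $h(Y,Y) = 0$ on the entire null cone. For each normal vector $\xi$ the quadratic form $Y \mapsto \bp{h(Y,Y),\xi}$ therefore vanishes on the null cone of the non-degenerate indefinite form $\bp{\cdot,\cdot}$, hence equals $c_\xi \bp{Y,Y}$ for a scalar $c_\xi$; since $c_\xi$ is linear in $\xi$ it equals $\bp{H,\xi}$ for a unique normal vector $H$, and $h(Y,Z) = \bp{Y,Z}H$ follows. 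With umbilicality in hand, your treatment of the normal identity $\nabla^\perp_X h(X,X) = 0$ is correct and yields $\nabla^\perp H = 0$, so $M_\alpha$ is spherical. I recommend replacing the hyperbolic-rotation polarization with this null-cone argument; it is also the step that makes transparent why both hypotheses (indefinite induced metric, definite normal metric) are indispensable.
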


These results can be further generalized by considering more general types of curves such as helices (which we will not define here). See \cite{Nakanishi1988} where a theorem analogous to \cref{thm:circNdSph} is proven characterizing helices in terms of geodesic submanifolds. Also in \cite{Jun1994} results relating conformal circles to umbilical submanifolds are presented.

The following lemma describes how much information is required to specify a sphere. It is a partial generalization of the corresponding lemma for the Riemannian case proven in \cite{Kassabov2010}.

\begin{lemma}[Uniqueness of Spheres] \label{lem:unqSph}
	Suppose that $M$ and $N$ are connected and geodesically complete spherical submanifolds of $\bar{M}$ both satisfying the following condition: For some $p \in M \cap N$, $M$ and $N$ are tangent and have the same mean curvature vectors. Then $M \equiv N$.
\end{lemma}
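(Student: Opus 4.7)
\bigskip\noindent\textbf{Proof strategy.}
The plan is first to show that $M$ and $N$ coincide in an ambient neighborhood of $p$ via the uniqueness of circles (\cref{lem:circUniq}), and then to extend this agreement globally by a connectedness argument.

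For the local step, note that since $M$ and $N$ are spherical, their mean curvature vector fields $H_M$ and $H_N$ are parallel in the respective normal connections, and by hypothesis $H_M(p)=H_N(p)=:\zeta$ while $T_p M = T_p N$ (in particular $\dim M = \dim N$). Given any unit vector $X_p\in T_p M=T_p N$, let $\gamma_M$ and $\gamma_N$ be the unit-speed geodesics of $M$ and $N$ through $p$ with initial velocity $X_p$; both are defined on all of $\R$ by geodesic completeness. \Cref{ex:geoInSph} shows each satisfies the circle equation \cref{eq:circ} in $\bar{M}$, with common initial data
\begin{equation*}
\gamma(0)=p,\qquad \dot{\gamma}(0)=X_p,\qquad (\bar{\nabla}_X X)|_p = \bp{X_p,X_p}\,\zeta,
\end{equation*}
where the equality of the third datum uses $H_M(p)=H_N(p)$. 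The uniqueness part of \cref{lem:circUniq} then forces $\gamma_M=\gamma_N$ pointwise in $\bar{M}$. Letting $X_p$ vary and using that $\exp_p^M, \exp_p^N$ are local diffeomorphisms at the origin (together with the elementary fact that two embedded submanifolds of the same dimension tangent at a point, with one locally contained in the other, must coincide locally), we obtain an open neighborhood $U\subseteq\bar{M}$ of $p$ with $M\cap U = N\cap U$.

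For the global step, set $A:=\{q\in M\cap N : T_q M=T_q N,\ H_M(q)=H_N(q)\}$. Then $p\in A$, and applying the local step at each point of $A$ shows that $A$ is open in $M$. For closedness of $A$ in $M$: if $q_n\in A$ converges to $q\in M$, then for $n$ sufficiently large $q_n$ lies in a normal neighborhood of $q$ in $M$, and the local step applied at $q_n$ produces an ambient neighborhood $V\ni q$ in $\bar{M}$ with $M\cap V = N\cap V$; this forces $q\in N$, and continuity of tangent spaces and mean curvature vectors along $M$ places $q$ in $A$. By connectedness of $M$, $A=M$, so $M\subseteq N$; the symmetric argument yields $N\subseteq M$, hence $M\equiv N$.

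The main obstacle lies in the closedness step. The local argument is a clean pseudo-Riemannian analog of the Riemannian proof in \cite{Kassabov2010}; however, unlike the Riemannian setting, Hopf--Rinow is unavailable, so we cannot reduce the global statement to a single geodesic from $p$ to an arbitrary point of $M$. The workaround is to propagate the local agreement step by step, relying on the existence of normal neighborhoods in $\bar{M}$ and the geodesic completeness of both $M$ and $N$ to pass from points of $A$ to their limits.
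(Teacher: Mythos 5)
Your proof is correct and follows essentially the same route as the paper's: both reduce the statement to the uniqueness of circles (\cref{lem:circUniq}), applied via \cref{ex:geoInSph} to geodesics of the spherical submanifolds, and then spread the agreement over all of $M$ by connectedness (the paper via broken geodesics together with parallel transport of the tangent spaces, you via an open--closed argument). The only soft spot is your closedness step, where the neighborhood produced by the local step at $q_n$ need not actually contain $q$; but this is easily repaired with ingredients you already have, since the geodesic of $M$ joining $q_n$ to $q$ supplied by the normal neighborhood is itself a geodesic of $N$ by your local step and geodesic completeness, which yields $q \in N$ directly.
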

\begin{proof}
	Our proof is a generalization of the proof of lemma~4.14 in \cite[P.~105]{barrett1983semi}.

	Let $q \in M$ be arbitrary and suppose that $\gamma(t)$ is a geodesic segment in $M$ running from $p$ to $q$. Then observe that $\gamma$ is a geodesic circle in $\bar{M}$ with velocity $X_p$ and acceleration $\bp{X,X}|_p H^M_p$ at $p$ where $H^M$ is the mean curvature vector field of $M$. By the uniqueness of circles (see \cref{{lem:circUniq}}) and the hypothesis it follows that $\gamma$ is also geodesic in $N$ which is defined everywhere since $N$ is geodesically complete. Note that this implies that mean curvature vector fields of $M$ and $N$ coincide over $\gamma$, so we denote this vector field by $H$.

	Now suppose $Z_p \in T_p M \cap X_p^{\perp}$ and let $Z$ be the parallel transport of $Z_p$ over $\gamma$ with respect to $M$. Since parallel transport is an isometry, $\bp{Z, X} = 0$. Thus by the Gauss equation, 
	\begin{align}
		\bar{\nabla}_X Z & = \nabla^M_X Z + \bp{Z, X}H \\
		& = 0
	\end{align}
	
	\noindent where $\bar{\nabla}$ is the Levi-Civita connection on $\bar{M}$ and $\nabla^M$ is the induced Levi-Civita connection on $M$. Thus $Z$ is also the parallel transport of $Z_p$ over $\gamma$ with respect to $\bar{M}$.
	
	Thus the parallel transport of $T_p M \cap X_p^{\perp}$ to $q$ on $\bar{M}$ is equal to $T_q M \cap X_q^{\perp}$. Similarly the parallel transport of $T_p N \cap X_p^{\perp}$ to $q$ on $\bar{M}$ is equal to $T_q N \cap X_q$. Since the parallel transport on $\bar{M}$ is uniquely determined, we deduce that $T_q M \cap X_q^{\perp} = T_q N \cap X_q^{\perp}$. Since $X_q \in T_q M,T_q N$, we conclude that $T_q M  = T_q N $. Thus since $M$ is connected, one can apply this argument to an arbitrary broken geodesic (see \cite{barrett1983semi}) to conclude that $M \subseteq N$.
	
	Finally by applying the argument for $M$ interchanged with $N$, we see that $M \equiv N$.
\end{proof}

Let $M$ be a space of constant curvature. We will show in this article that for every $p \in M$, non-degenerate subspace $V \subset T_p M$, and normal vector $H \in (T_p M)^\perp$ there exists a connected and geodesically complete spherical submanifold passing through $p$ with tangent space $V$ and mean curvature vector $H$ at $p$. In the following theorem, we will show that this property characterizes Riemannian spaces of constant curvature. For the following theorem, we say a Riemannian manifold $M$ satisfies the \emph{axiom of $r$-spheres} if: for every $p \in M$ and any $r$ dimensional subspace $V \subset T_p M$ there exists a spherical submanifold passing through $p$ and tangent to $V$.

\begin{theorem}[Spheres in spaces of constant curvature \cite{leung1971}] \label{thm:sphSCC}
	Let $M$ be a Riemannian manifold with dimension $n \geq 3$ and fix $2 \leq r < n$. Then $M$ is a space of constant curvature iff it satisfies the axiom of $r$-spheres (see above).
\end{theorem}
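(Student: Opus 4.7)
The plan is to prove the two implications separately. The forward direction is essentially already contained in the paper: by \cref{thm:spherSubI}, at every point $p$ and every non-degenerate subspace $V \subset T_pM$ (in particular every $r$-dimensional one), the choice $z = 0$ produces a connected, geodesically complete, geodesic---hence spherical---submanifold through $p$ tangent to $V$. So a space of constant curvature trivially satisfies the axiom of $r$-spheres.

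For the converse, suppose $M$ satisfies the axiom of $r$-spheres. I would first extract from each spherical submanifold a pointwise algebraic constraint on the ambient curvature tensor $R$. If $N \subset M$ is spherical, then $h(X,Y) = \bp{X,Y}H$ and $\nabla^\perp H = 0$, so
\begin{equation*}
(\bar{\nabla}_X h)(Y,Z) = \bp{Y,Z}\,\nabla^\perp_X H = 0,
\end{equation*}
and the Codazzi equation collapses to $(R(X,Y)Z)^\perp = 0$ for all $X,Y,Z \in T_pN$. Equivalently, the tangent space $T_pN$ is invariant under $R(X,Y)$ whenever $X,Y \in T_pN$. The axiom of $r$-spheres promotes this to the following property of $R$ at every $p \in M$: for every $r$-dimensional subspace $V \subset T_pM$,
\begin{equation*}
R(X,Y)Z \in V \qquad \text{whenever } X,Y,Z \in V.
\end{equation*}

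The main obstacle is the purely algebraic step of deducing from this subspace-invariance condition that $R$ has the constant-curvature form
\begin{equation*}
R(X,Y)Z = K(p)\bigl(\bp{Y,Z}\,X - \bp{X,Z}\,Y\bigr)
\end{equation*}
at each $p$. Since $2 \leq r < n$ I would specialize to $2$-planes $V = \operatorname{span}\{X,Y\}$ to obtain $\bp{R(X,Y)X, Z} = 0$ for every $Z$ orthogonal to both $X$ and $Y$. Polarizing this identity in $X$ and $Y$ and combining with the first Bianchi identity and the standard symmetries of the Riemann tensor then forces all sectional curvatures at $p$ to agree; this is the classical isotropy lemma at the heart of Leung's proof.

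Finally, because $n \geq 3$ and each point of $M$ is isotropic, Schur's theorem upgrades pointwise isotropy to global constancy of the sectional curvature function $K \colon M \to \R$, and $M$ is a space of constant curvature.
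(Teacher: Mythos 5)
The paper itself does not prove this theorem; its ``proof'' is just the citation to \cite{leung1971}, so there is nothing internal to compare against. Your sketch is essentially the classical Leung--Nomizu argument and is sound: the forward direction via \cref{thm:spherSubI} with $z=0$ works (with the small caveat that \cref{thm:spherSubI} is stated for the models $\eunn(\kappa)$, so for an abstract Riemannian space of constant curvature you should add that the axiom of $r$-spheres is a local condition and $M$ is locally isometric to such a model), and the Codazzi computation $(R(X,Y)Z)^{\perp} = \bp{Y,Z}\nabla^{\perp}_X H - \bp{X,Z}\nabla^{\perp}_Y H = 0$ correctly extracts the curvature-invariance condition from each spherical submanifold. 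The one imprecision is the phrase ``specialize to $2$-planes $V=\spa{X,Y}$'': the axiom only supplies $r$-dimensional spheres for the fixed $r$, so for $r>2$ you cannot take $V$ to be a $2$-plane. The correct move, and the place where both inequalities $2\leq r<n$ are used, is: given orthonormal $X,Y$ and $Z\perp\spa{X,Y}$, choose an $r$-dimensional subspace $V$ with $\spa{X,Y}\subseteq V\subseteq Z^{\perp}$ (possible since $\dim Z^{\perp}=n-1\geq r\geq 2$); invariance of $V$ under $R(X,Y)$ then gives $\bp{R(X,Y)X,Z}=0$. From there the classical isotropy lemma and Schur's theorem finish the converse as you describe. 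So your proposal is a correct reconstruction of the cited proof rather than a genuinely different route.
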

\begin{proof}
	See \cite{leung1971}.
\end{proof}

\section{Spherical Submanifolds of Spaces of Constant Curvature} \label{sec:SphSubSCC}

In this section $\kappa$ is allowed to be zero. The following optional proposition relates umbilical submanifolds to spherical ones in spaces of constant curvature.

\begin{proposition} \label{prop:sccUmbImpSph}
	Any umbilical submanifold of $\eunn(\kappa)$ with dimension greater than one is necessarily spherical.
\end{proposition}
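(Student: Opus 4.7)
The plan is to prove this via the Codazzi--Mainardi equation, exploiting the fact that the ambient space has constant curvature. Let $M$ be an umbilical submanifold of $\eunn(\kappa)$ with $\dim M \geq 2$, so $h(Y,Z) = \bp{Y,Z} H$ for all tangent $Y,Z$, where $H$ is the mean curvature vector field. We want to show $\nabla^\perp_X H = 0$ for every $X \in \ve(M)$.

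First I would invoke the Codazzi equation for a submanifold of a pseudo-Riemannian manifold, which reads $(\bar\nabla_X h)(Y,Z) - (\bar\nabla_Y h)(X,Z) = (\bar R(X,Y)Z)^\perp$. In a space of constant curvature $\kappa$, the ambient Riemann tensor has the form $\bar R(X,Y)Z = \kappa(\bp{Y,Z}X - \bp{X,Z}Y)$; since $X$ and $Y$ are tangent to $M$, so is $\bar R(X,Y)Z$, and the right-hand side above vanishes. Hence for $M \subset \eunn(\kappa)$ the Codazzi equation reduces to $(\bar\nabla_X h)(Y,Z) = (\bar\nabla_Y h)(X,Z)$.

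Next I would substitute the umbilical form of $h$. Using metric compatibility of the Levi-Civita connection on $M$,
\begin{align}
(\bar\nabla_X h)(Y,Z) &= \nabla^\perp_X(\bp{Y,Z}H) - \bp{\nabla_XY,Z}H - \bp{Y,\nabla_XZ}H \\
&= \bp{Y,Z}\,\nabla^\perp_X H.
\end{align}
Thus Codazzi becomes the identity
\begin{equation*}
\bp{Y,Z}\,\nabla^\perp_X H \;=\; \bp{X,Z}\,\nabla^\perp_Y H, \qquad X,Y,Z \in \ve(M). \tag{$\ast$}
\end{equation*}

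Finally I would extract $\nabla^\perp_X H = 0$ from ($\ast$), arguing pointwise (since $\nabla^\perp_\cdot H$ is tensorial in the first slot). Fix $p \in M$ and $X_p \in T_pM$. If $X_p$ is non-null, then $X_p^\perp \cap T_pM$ is a non-degenerate subspace of dimension $\dim M - 1 \geq 1$, so it contains a non-null vector $Y_p$; plugging $Y = Y_p$, $Z = Y_p$ and $X = X_p$ into ($\ast$) gives $\bp{Y_p,Y_p}\nabla^\perp_{X_p} H = 0$, hence $\nabla^\perp_{X_p} H = 0$. If $X_p$ is null, I would write $X_p = X_1 + X_2$ with $X_1, X_2$ non-null (always possible: complete $X_p$ to a basis and split), and conclude by linearity.

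The whole argument is short; the only point that deserves care is the last step in the null/low-dimensional regime, where one must verify that ``enough'' non-null test vectors are available in $T_pM$. The hypothesis $\dim M > 1$ is precisely what makes this orthogonal complement argument go through, so this is where the dimension assumption is used.
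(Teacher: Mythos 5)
Your argument is correct. The paper itself gives no proof of this proposition; it simply cites Lemma~3.2~(a) of Chen's book, and the proof there is precisely the Codazzi-equation argument you have written out: in constant curvature the normal component of $\bar R(X,Y)Z$ vanishes, substituting $h(Y,Z)=\bp{Y,Z}H$ turns Codazzi into $\bp{Y,Z}\nabla^\perp_XH=\bp{X,Z}\nabla^\perp_YH$, and choosing $Y=Z$ non-null and orthogonal to $X$ kills $\nabla^\perp_XH$. Your handling of the two points where the pseudo-Riemannian setting requires care is also right: for non-null $X_p$ the subspace $X_p^\perp\cap T_pM$ is non-degenerate of dimension $\dim M-1\geq 1$ and hence contains a non-null vector, and a null $X_p$ in the non-degenerate space $T_pM$ can always be split as a sum of two non-null vectors, after which tensoriality of $\nabla^\perp_{\cdot}H$ in the lower slot finishes the argument. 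This is exactly where the hypothesis $\dim M>1$ enters (for a curve the equation $(\ast)$ is vacuous, consistent with the remark in \cref{sec:pRiemSubFol} that one-dimensional submanifolds are trivially umbilical but need not be spherical). So your proposal is a correct, self-contained replacement for the outsourced citation.
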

\begin{proof}
	This follows from Lemma~3.2~(a) in \cite{chen2011pseudo}.
\end{proof}

Here we state some properties of spherical submanifolds in spaces of constant curvature.

\begin{proposition}[Spherical Submanifolds in Spaces of Constant Curvature] \label{prop:eunnUmb}
	Let $\phi : N \rightarrow \eunn(\kappa)^\circ$ be an isometric immersion of a pseudo-Riemannian manifold $N$. If $N$ is a spherical submanifold, then
	
	\begin{enumerate}[(a)]
		\item $\bp{H,H}$ is constant. 
		\item $N$ is of constant curvature $\kappa + \bp{H,H}$ \label{it:umbCc}
	\end{enumerate}
\end{proposition}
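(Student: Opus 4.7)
The plan is to treat the two parts separately, with (a) following quickly from the definition of a spherical submanifold and (b) following from the Gauss equation for the curvature tensor.

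For part (a), I would use that by definition a spherical submanifold is umbilical with $\nabla^\perp_X H = 0$ for every $X \in \ve(N)$. Since the induced normal connection $\nabla^\perp$ is metric compatible (as noted in the discussion of pseudo-Riemannian distributions in \cref{sec:pRiemSubFol}), we have
\begin{equation}
    X\bp{H,H} = 2\bp{\nabla^\perp_X H, H} = 0
\end{equation}
for every $X \in \ve(N)$. Hence $\bp{H,H}$ is locally constant; on any connected piece of $N$ (and in particular on each immersed neighborhood) it is constant.

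For part (b), I would invoke the Gauss equation for the curvature tensors: for $X,Y,Z,W \in \ve(N)$,
\begin{equation}
    \bp{R(X,Y)Z,W} = \bp{\bar{R}(X,Y)Z,W} + \bp{h(X,W),h(Y,Z)} - \bp{h(X,Z),h(Y,W)}.
\end{equation}
Since $\eunn(\kappa)^\circ$ has constant curvature $\kappa$, the ambient term is
\begin{equation}
    \bp{\bar{R}(X,Y)Z,W} = \kappa\bigl(\bp{Y,Z}\bp{X,W} - \bp{X,Z}\bp{Y,W}\bigr).
\end{equation}
Because $N$ is umbilical, $h(X,Y) = \bp{X,Y} H$, so the second fundamental form terms combine to $\bp{H,H}\bigl(\bp{X,W}\bp{Y,Z} - \bp{X,Z}\bp{Y,W}\bigr)$. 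Adding these gives
\begin{equation}
    \bp{R(X,Y)Z,W} = (\kappa + \bp{H,H})\bigl(\bp{Y,Z}\bp{X,W} - \bp{X,Z}\bp{Y,W}\bigr),
\end{equation}
which is exactly the curvature identity characterizing constant curvature $\kappa + \bp{H,H}$.

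There is no serious obstacle: (a) is essentially a one-line consequence of spherical-ness plus metric compatibility, and (b) is a direct substitution into the Gauss equation using the umbilical condition and the known constant curvature of the ambient space. The only mild care needed is that part (a) yields ``locally constant,'' so one should note that on each connected component of $N$ we get a genuine constant, which is enough for (b) to read as stated.
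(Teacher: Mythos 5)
Your proof is correct. Note that the paper does not actually write out an argument for this proposition: its ``proof'' is a one-line citation to Lemma~3.2 of \cite{chen2011pseudo}. What you have written is essentially the standard proof of that cited lemma, so in substance the two routes coincide; the difference is only that yours is self-contained. Part (a) is exactly as you say: sphericality gives $\nabla^{\perp}_X H = 0$, and metric compatibility of the normal connection (which the paper establishes for $\nabla^{E}$ and hence for $\nabla^{E^{\perp}}$ in its discussion of pseudo-Riemannian distributions) yields $X\bp{H,H} = 2\bp{\nabla^{\perp}_X H, H} = 0$. Part (b) is the Gauss curvature equation specialized to an umbilical submanifold of a constant-curvature ambient space, and your bookkeeping of the terms
\begin{equation}
	\bp{h(X,W),h(Y,Z)} - \bp{h(X,Z),h(Y,W)} = \bp{H,H}\bigl(\bp{X,W}\bp{Y,Z} - \bp{X,Z}\bp{Y,W}\bigr)
\end{equation}
is right, giving the constant-curvature identity with constant $\kappa + \bp{H,H}$. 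Your closing caveat is also the correct one to make: (a) only gives local constancy, so one should read the statement on each connected component of $N$ (or assume $N$ connected); note too that for $\dim N = 1$ the curvature assertion in (b) is vacuous, and for $\dim N \geq 2$ no appeal to Schur's lemma is needed since the constancy of $\bp{H,H}$ is already supplied by (a).
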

\begin{proof}
	Lemma~3.2 from \cite{chen2011pseudo}.
\end{proof}

\section{Standard spherical submanifolds of pseudo-Euclidean space} \label{sec:StdSubEunn}

We collect some properties of $\eunn(\kappa)$ in the following proposition.

\begin{proposition} \label{prop:eunnStdHQuad}
	Let $r$ denote the dilatational vector field and $r^2 = \bp{r,r}$. Fix $r^2 \in \R$, the following are true about $\eunn(\frac{1}{r^2})$
	\begin{enumerate}[(a)]
		\item It is a spherical submanifold with mean curvature normal
		\begin{equation} \label{eq:eunnkapMean}
			H = - \frac{r}{r^2}
		\end{equation}
		\item It has constant curvature $\dfrac{1}{r^2}$ and is geodesically complete.
	\end{enumerate}
\end{proposition}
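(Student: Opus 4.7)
The plan is to compute the second fundamental form of the inclusion $\eunn(1/r^2) \hookrightarrow \eunn$ directly, using the fact that the Levi-Civita connection on $\eunn$ is ordinary directional differentiation and that the dilatational field $r$ is (up to a factor) the gradient of $p \mapsto \tfrac{1}{2}\bp{p,p}$. Since the hyperquadric is a level set of this function, $r$ is everywhere normal to $\eunn(1/r^2)$; moreover, since $\bp{r,r} = r^2 \neq 0$ is constant on the hyperquadric, $r$ trivializes the (one-dimensional) normal bundle.

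Next I would plug $r$ into the Weingarten equation. For any $X \in \ve(\eunn(1/r^2))$, treating $r$ as the identity vector field on $\eunn$ gives $\bar{\nabla}_X r = X$, which is purely tangential. Comparing with $\bar{\nabla}_X r = -A_r(X) + \nabla^\perp_X r$ yields simultaneously $A_r(X) = -X$ and $\nabla^\perp_X r = 0$. Via \cref{eq:secFwein}, for all tangent $X,Y$,
\begin{equation}
    \bp{h(X,Y),r} = \bp{A_r(X),Y} = -\bp{X,Y},
\end{equation}
and since $r$ spans the normal bundle, this forces $h(X,Y) = -\bp{X,Y}\,r/r^2$. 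This exhibits $\eunn(1/r^2)$ as umbilical with mean curvature normal $H = -r/r^2$, proving \cref{eq:eunnkapMean}. Sphericity then follows immediately from $\nabla^\perp_X r = 0$ and the constancy of $r^2$, which give $\nabla^\perp_X H = -(1/r^2)\nabla^\perp_X r = 0$.

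For part (b), the constant curvature assertion follows from \cref{prop:eunnUmb}~\ref{it:umbCc} applied with ambient curvature $0$ (or directly from the Gauss equation in $\eunn$): the sectional curvature of $\eunn(1/r^2)$ equals $\bp{H,H} = \bp{r,r}/r^4 = 1/r^2$. For geodesic completeness, I would argue concretely: given a point $p \in \eunn(1/r^2)$ and a unit tangent $X_p$, the $2$-plane $\spa{p, X_p}$ meets the hyperquadric in a curve parametrized globally by an appropriate trigonometric or hyperbolic function (depending on $\sgn(1/r^2)$), and by \cref{ex:geoInSph} together with the uniqueness of circles in $\eunn$ this curve is the geodesic through $p$ with velocity $X_p$; since the parametrization exists for all $t \in \R$, the manifold is geodesically complete.

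The main obstacle is really bookkeeping rather than mathematics: verifying that $r$ is normal and that $\bar{\nabla}_X r = X$ are quick, but one must be careful that the proposition allows both $r^2 > 0$ and $r^2 < 0$ (hence both spheres and hyperboloids), so the explicit geodesic parametrization used for completeness must be handled uniformly or split into cases according to $\sgn(r^2)$. This case split, rather than any deep calculation, is the only genuinely nontrivial step.
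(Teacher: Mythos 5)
Your argument is correct, but it takes a genuinely more self-contained route than the paper. The paper disposes of part (a) almost entirely by citation: umbilicity and the formula $H=-r/r^2$ are quoted from Lemma~4.27 of \cite{barrett1983semi}, sphericity is then deduced from \cref{prop:sccUmbImpSph} (for dimension $>1$) or read off from \cref{eq:eunnkapMean}, and geodesic completeness is quoted from Lemma~4.29 of \cite{barrett1983semi}. You instead carry out the Weingarten computation yourself: $\bar{\nabla}_X r = X$ tangential gives $A_r(X)=-X$ and $\nabla^\perp_X r = 0$ simultaneously, whence $h(X,Y)=-\bp{X,Y}\,r/r^2$ via \cref{eq:secFwein} and $\nabla^\perp_X H=0$ with no case split on dimension. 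This is arguably cleaner than the paper's own treatment of sphericity, which awkwardly separates the one-dimensional case, and it makes the proposition independent of the external reference. For the constant curvature in part (b) you and the paper use the same tool, \cref{prop:eunnUmb}~\ref{it:umbCc}.

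The one place where your sketch is genuinely incomplete is geodesic completeness. By restricting to a \emph{unit} tangent $X_p$ you cover only the non-null geodesics; in the pseudo-Riemannian setting ($\nu\geq 1$, or $\nu\leq n-1$ on the negative-definite side) the hyperquadric also carries null geodesics, which cannot be normalized and are not reached by the circle machinery of \cref{ex:geoInSph}. These must be handled separately, though the fix is immediate: for null $X_p$ the straight line $\gamma(t)=p+tX_p$ satisfies $\bp{\gamma,\gamma}=\bp{p,p}$ identically (since $\bp{p,X_p}=0$ and $X_p^2=0$), so it lies in the hyperquadric, is a geodesic of $\eunn$ contained in the submanifold, hence a geodesic of the submanifold, and is defined for all $t$. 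A second, smaller bookkeeping point: the choice between trigonometric and hyperbolic parametrization in the non-null case depends on the sign of $\bp{X_p,X_p}/r^2$, not on $\sgn(1/r^2)$ alone. With these two additions your argument is complete.
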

\begin{proof}
	The first follows from \cite[Lemma~4.27]{barrett1983semi}. When $\dim \eunn(\frac{1}{r^2}) > 1$, the first result together with \cref{prop:sccUmbImpSph} shows that $\eunn(\frac{1}{r^2})$ is a spherical submanifold. In any case, it follows from \cref{eq:eunnkapMean} that $\eunn(\frac{1}{r^2})$ is a spherical submanifold. Hence the second result follows from \cref{prop:eunnUmb}~\ref{it:umbCc}. It follows from lemma~4.29 in \cite{barrett1983semi} that $\eunn(\frac{1}{r^2})$ is geodesically complete.
\end{proof}

We collect similar properties of $\punn$.

\begin{proposition} \label{prop:eunnPunn}
	The following are true about $\punn$ with mean curvature vector $-a$:
	\begin{enumerate}[(a)]
		\item It is a spherical submanifold with mean curvature normal
		\begin{equation}
			H = -a
		\end{equation}
		\item It is globally isometric to $\eunn$.
	\end{enumerate}
\end{proposition}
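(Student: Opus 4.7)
For part (b), my plan is to verify that the map $\psi : V \to \E^{n+2}_{\nu+1}$ defined in \cref{eq:psiPunn} is a global isometry onto $\punn$. First I would check $\psi(V) \subseteq \punn$ by expanding $\psi(x)^{2}$ and $\bp{\psi(x), a}$; the cancellations rely only on $a^{2} = b^{2} = 0$, $\bp{a,b}=1$, and $V \perp \spa{a,b}$. For bijectivity, given $p \in \punn$ I would decompose $p = \alpha a + \beta b + v$ with $v \in V$; the conditions $\bp{p,a} = 1$ and $p^{2}=0$ force $\beta = 1$ and $\alpha = -\tfrac{1}{2}v^{2}$, giving $p = \psi(v)$. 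For the isometry property, compute the differential $\psi_{*}(y)(x) = x - \bp{y,x}a$ and verify $\bp{\psi_{*}(y)(x_{1}), \psi_{*}(y)(x_{2})} = \bp{x_{1}, x_{2}}$, which again reduces to $a^{2}=0$ and $V \perp a$. Together with smoothness this gives a global isometry.

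For part (a), the core task is to compute the second fundamental form of $\punn$ inside the flat ambient $\E^{n+2}_{\nu+1}$. The first step is to identify the normal bundle: $\punn$ is cut out by $p^{2}=0$ and $\bp{p,a}=1$, whose gradients are $2p$ and $a$, so $N_{p}\punn = \spa{p,a}$. One verifies this is complementary to $T_{p}\punn = \spa{p,a}^{\perp}$ using $\bp{p,a}=1$ together with $p^{2}=a^{2}=0$. Next, given tangent vectors $X = \psi_{*}(y)(x_{1})$ and $Y = \psi_{*}(y)(x_{2})$, extend $Y$ along $\punn$ by $\tilde Y(\psi(z)) := x_{2} - \bp{z,x_{2}}a$ and differentiate along the straight-line curve $\gamma(t) = \psi(y + t x_{1})$. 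Since the ambient connection is flat, $\bar{\nabla}_{X}\tilde Y = \tfrac{d}{dt}\big|_{0}\tilde Y(\gamma(t)) = -\bp{x_{1}, x_{2}}a$. This vector already lies in $\spa{a}\subseteq N_{p}\punn$, so its normal component is itself and hence $h(X,Y) = -\bp{x_{1},x_{2}}a = -\bp{X,Y}a$ using the isometry from part (b). Thus $\punn$ is umbilical with mean curvature normal $H = -a$.

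To conclude $\punn$ is spherical, observe that $H = -a$ is the restriction to $\punn$ of a constant vector field on $\E^{n+2}_{\nu+1}$, so $\bar{\nabla}_{X}(-a) = 0$ for every tangent $X$ and therefore $\nabla^{\perp}_{X}H = 0$; alternatively, since $\dim \punn = n \geq 2$, one can invoke \cref{prop:sccUmbImpSph} applied in the flat ambient $\E^{n+2}_{\nu+1} = \E^{n+2}_{\nu+1}(0)$. I do not anticipate a deep obstacle here: the only delicate point is tracking the two-dimensional normal bundle, whose natural generators $p$ and $a$ are both lightlike, and taking care not to confuse the degenerate light cone $\E^{n+2}_{\nu+1}(\infty)$ (in which $\punn$ naturally sits) with the non-degenerate submanifold $\punn$ itself.
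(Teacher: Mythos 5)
Your proposal is correct and follows essentially the same route as the paper: compute $\psi_*v = v - \bp{v,x}a$ to see the induced metric agrees with that of $V$ (hence the global isometry), then differentiate in the flat ambient space to find the normal component $-\bp{v,w}a$, giving umbilicity with $H=-a$, and conclude sphericity from $-a$ being covariantly constant. Your added checks (surjectivity onto $\punn$, explicit identification of the normal bundle $\spa{p,a}$) are sound elaborations of details the paper leaves implicit, not a different argument.
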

\begin{proof}
	Consider the map $\psi$ given by \cref{eq:psiPunn}. It then follows that for $v \in T V$, 
	
	\begin{align}
		\psi_*v = v - \bp{v,x} a
	\end{align}
	
	The above equation shows that the induced metric at each point is the induced metric on $V$. Hence $\punn$ is globally isometric to $\eunn$. Now to calculate the second fundamental form, we have for $w,v \in T V$:
	
	\begin{align}
		\nabla_{\psi_*w} \psi_*v & = \nabla_w v - \bp{\nabla_w v, x} a - \bp{v, w} a \\
		& = \psi_* \nabla_w v - \bp{v, w} a
	\end{align}
	
	\noindent Hence it follows that $\punn$ is umbilical with mean curvature vector $-a$. Since $-a$ is covariantly constant, it follows that $\punn$ is spherical.
\end{proof}

\section{Warped Products} \label{sec:WP}

In this section we define the warped product and give some properties of it which will be used in this article. The content of this section is primarily from \cite{Meumertzheim1999} where the more general notion of a twisted product was introduced. For more on warped products see \cite{Meumertzheim1999,Zeghib2011}. A warped product can be defined to be a special case of a twisted product as follows:

\begin{definition}[Twisted and Warped Products] \label{def:twistedProd}
	Let $M = \prod_{i=0}^k M_i$ be a product of pseudo-Riemannian manifolds $(M_i, g_i)$ where $\dim M_i > 0$ for $i > 0$. Suppose for $i=0,...,k$, $\pi_i : M \rightarrow M_i$ is the projection map and $\rho_i: M \rightarrow \R^+$ is a function. The following metric $g$ on $M$ is called a \emph{twisted product metric}
	
	\begin{equation}
		g(X,Y) = \sum_{i=0}^k \rho_i^2 g_i(\pi_{i *}X,\pi_{i *}Y) \quad \text{for $X,Y \in \ve(M)$}
	\end{equation}
	
	If each $\rho_i$ depends only on $M_0$ and $\rho_0 \equiv 1$ then $g$ is called a \emph{warped product metric} and $(M,g)$ is called a \emph{warped product}. The warped product is denoted by $M_0 \times_{\rho_1} M_1 \times \cdots \times_{\rho_k} M_k $. $M_0$ is called the geodesic factor of the warped product and the $M_i$ for $i > 0$ are called spherical factors.
\end{definition}

\begin{example}[Prototypical warped product]
	The prototypical example of a warped product is the following warped product defined in (an open subset of) $\E^n$, which is the product manifold $\R^+ \times S^{n-1}$ equipped with the metric $g = \d \rho^2 + \rho^2 \tilde{g}$ where $\tilde{g}$ is the metric of the $(n-1)$-sphere $S^{n-1}$.
\end{example}

In the above example we say the warped product $\R^+ \times_{\rho} S^{n-1}$ is a \emph{warped product decomposition} of $\E^n$. In general a \emph{warped product decomposition} of a given pseudo-Riemannian manifold $M$ is a warped product which is (locally) isometric to $M$.

Each factor $M_i$ of the product manifold induces a foliation $L_i$ of $M$. For any $\bar{p} \in M$ the leaf of this foliation through $\bar{p}$, $L_i(\bar{p})$, is given as follows:

\begin{align}
	L_i(\bar{p}) &: = \{p \in M :  p = (\bar{p}_1,\dotsc, \bar{p}_{i-1}, p_i, \bar{p}_{i+1},\dotsc, \bar{p}_k), \ p_i \in M_i  \}
\end{align}

\noindent where $\bar{p}_j = \pi_j(\bar{p})$. We let $E_i$ denote the integrable distribution induced by $L_i$, then $\Ei = (E_i)_{i=1}^k$ is called the \emph{product net} of $\prod_{i=1}^k M_i$.

We also note here that a warping function $\rho_i$ of a warped product is only uniquely defined modulo products of constants. To elaborate, from the above definition one sees that we can multiply $\rho_i^2$ by any $c \in \R^+$ if we divide $g_i$ by $c$. The geometry of the warped product is not altered by such transformations as we will see. We say that the warping functions are \emph{normalized} (with respect to a point $\bar{p} \in M$), if for each $i$, $\rho_i(p) = 1$ for all $p \in L_i(\bar{p})$.

We record here some properties of the warped product:

\begin{proposition}[Properties of the Warped Product \cite{Meumertzheim1999}] \label{prop:tpProps}
	Let $\sideset{^{\rho}}{_{i=0}^k}{\prod}M_i$ be a warped product with product net $\Ei = (E_i)_{i=0}^k$.
	
	\begin{enumerate}
		\item $\Ei$ is orthogonal, i.e. it satisfies: $T M = \bigobot\limits_{i=0}^k E_i$
		\item For each $i > 0$ the distribution $E_i$ is Killing with mean curvature normal $H_i =  - \nabla (\log \rho_i)$. Hence $E_0$ is geodesic.
	\end{enumerate}
\end{proposition}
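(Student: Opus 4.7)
The plan is to reduce everything to Koszul-formula computations applied to vector fields that are lifts of vector fields on the factors $M_i$. Two structural facts do all of the work: (i) lifts from distinct factors Lie-commute, and (ii) each warping function $\rho_i$ (and each lift of a function on $M_i$) only ``sees'' derivatives in its own factor. Since all the tensors in question are $\F(M)$-bilinear, it suffices to verify the claims on local frames of lifted vector fields.

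Part (1) is essentially immediate. If $X \in \Gamma(E_i)$ and $Y \in \Gamma(E_j)$ with $i \neq j$, then $\pi_{k*}X = 0$ for $k \neq i$ and $\pi_{k*}Y = 0$ for $k \neq j$, so every term in $g(X,Y) = \sum_k \rho_k^2 g_k(\pi_{k*}X, \pi_{k*}Y)$ vanishes, proving $T M = \bigobot_{i=0}^k E_i$.

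For Part (2) I would first establish that $E_0$ is geodesic. Taking $X, Y \in \Gamma(E_0)$ lifted from $M_0$ and testing $\nabla_X Y$ against an arbitrary $V \in \Gamma(E_i)$ ($i>0$) in the Koszul formula, every term vanishes: the three ``$g(\cdot,\cdot)$'' terms by orthogonality of the net, $V g(X,Y) = 0$ because $g(X,Y)$ depends only on $M_0$, $[X,Y] \in \Gamma(E_0)$, and the remaining brackets vanish because lifts commute. Thus $\nabla_X Y \in \Gamma(E_0)$. Next I would compute $\nabla_V W$ for $V,W \in \Gamma(E_i)$ ($i>0$) by testing against $Z$ in each factor. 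Testing against $Z \in \Gamma(E_0)$, the only surviving Koszul term is $-Z g(V,W) = -Z(\rho_i^2)\, g_i(V,W) = -2(Z\log\rho_i)\, g(V,W)$, giving the $E_0$-component $-g(V,W)\,\nabla(\log\rho_i)$. Testing against $Z \in \Gamma(E_j)$ with $j>0$, $j\neq i$, all Koszul terms vanish because $\rho_i^2 g_i(V,W)$ is independent of $M_j$ and $[V,W] \in \Gamma(E_i)$. This proves $E_i$ is umbilical with mean curvature normal $H_i = -\nabla(\log\rho_i) \in \Gamma(E_0)$.

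To upgrade ``umbilical'' to ``spherical'', I need $\nabla^\perp_V H_i = 0$ for $V \in \Gamma(E_i)$. Since $H_i$ is a lift from $M_0$, $[V,H_i]=0$, so $\nabla_V H_i = \nabla_{H_i} V$; a direct Koszul computation (testing $\nabla_{H_i} V$ against vector fields in each factor, as above) shows $\nabla_{H_i} V = (H_i \log \rho_i)\,V \in \Gamma(E_i)$. In particular its $E_i^\perp$-projection vanishes, so $E_i$ is spherical. Finally, ``Killing'' requires $E_i^\perp = E_0 \obot \bigobot_{j>0,\,j\neq i} E_j$ to be geodesic. By bilinearity I would split $X,Y \in \Gamma(E_i^\perp)$ into components along these summands and show each $\nabla_{X_a} Y_b$ has no $E_i$-part. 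The cases where $a$ or $b$ equals $0$ or where $a=b$ are handled by the computations already done; the remaining case is two distinct spherical factors $j \neq l$, both different from $i$, for which an analogous Koszul calculation shows $\nabla_{X_j} Y_l = 0$ outright (all terms die because $\rho_j$, $\rho_l$, $g_j$, $g_l$ each live on a single factor and the relevant brackets vanish).

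The main obstacle is not conceptual but bookkeeping: one must track which Koszul terms vanish for reasons of orthogonality, which vanish because functions depend only on $M_0$, and which vanish because lifts from distinct factors commute. Once one sets up the case analysis cleanly (tangent/normal to $E_i$, and within the normal part the further split $E_0$ versus other spherical factors), each individual computation is a one-line cancellation.
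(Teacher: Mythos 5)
Your proof is correct, but it is worth noting that the paper does not actually prove this proposition at all: it simply defers to Proposition~2 of \cite{Meumertzheim1999}. What you have written is a self-contained version of the standard Koszul-formula argument (essentially the classical warped-product computation of O'Neill, extended from two factors to $k+1$), and all of your case analysis checks out: the orthogonality of the net, the vanishing of $(\nabla_XY)^{E_i}$ for $X,Y\in\Gamma(E_0)$, the identity $g(\nabla_VW,Z)=-(Z\log\rho_i)\,g(V,W)$ for $V,W\in\Gamma(E_i)$ and $Z\in\Gamma(E_0)$ (the factor of $2$ from $Z(\rho_i^2)$ cancelling against the $2$ on the left of Koszul), and the outright vanishing of $\nabla_{X_j}Y_l$ for distinct spherical factors. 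Two small points deserve one extra line each if you write this up. First, the step $[V,H_i]=0$ uses that $H_i=-\nabla(\log\rho_i)$ is the \emph{lift} of a vector field on $M_0$; this is true because $\rho_0\equiv 1$ (so the metric on $E_0$ is the un-warped lift of $g_0$) and $\log\rho_i$ is constant along the other factors, but it should be stated, since for a general function on $M$ the gradient would not be a lift. Second, in this paper's terminology ``geodesic'' and ``Killing'' are phrased via $s^E$, which contains the antisymmetric part $A^E$; so geodesity of $E_i^\perp$ also requires its integrability. That is automatic here because $E_i^\perp=E_0\obot\bigobot_{j\neq i,\,j>0}E_j$ is itself a product-net distribution, but it is part of the definition being verified. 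With those two remarks added, your argument is a complete proof of the statement that the paper only cites.
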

\begin{proof}
	See proposition~2 in \cite{Meumertzheim1999}.
\end{proof}

The following theorem gives a converse to the above proposition.

\begin{theorem}[Geometric Characterization of Warped Products \cite{Meumertzheim1999}] \label{thm:geomTPWP}
	Let $M = \prod_{i=0}^kM_i$ be a connected product manifold equipped with metric $g$ and orthogonal product net $\Ei = (E_i)_{i=0}^k$. Then $g$ is the metric of a warped product $M_0 \times_{\rho_1} M_1 \times \cdots \times_{\rho_k} M_k $ iff $E_i$ are Killing foliations for $i = 1,...,k$.
\end{theorem}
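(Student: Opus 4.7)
The forward direction is precisely \cref{prop:tpProps}. For the converse my plan is, first, to establish the structural fact that for each $i > 0$ the mean curvature normal $H_i$ of the Killing foliation $E_i$ lies in the geodesic factor $E_0$, and then to iterate a single-factor (Hiepko-type) warped product decomposition.

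To prove $H_i \in E_0$, fix $X \in \Gamma(E_i)$ with $\bp{X,X} \neq 0$, which is possible since $E_i$ is non-degenerate. Integrability and umbilicity of $E_i$ give $(\nabla_X X)^{E_i^\perp} = \bp{X,X}\, H_i$. For each $j > 0$ with $j \neq i$, orthogonality of the product net yields $E_i \subset E_j^\perp$, and since $E_j$ is Killing its orthogonal complement $E_j^\perp$ is geodesic; hence $\nabla_X X \in E_j^\perp$, forcing the $E_j$-component of $H_i$ to vanish. Since also $H_i \in E_i^\perp$ by definition, $H_i$ has zero component in every $E_j$ with $j > 0$, so $H_i \in \bigl(\bigobot_{j > 0} E_j\bigr)^\perp = E_0$.

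I would next proceed by induction on $k$, the case $k = 0$ being trivial. For the inductive step, apply Hiepko's theorem to the orthogonal splitting $TM = E_k^\perp \oplus E_k$: since $E_k$ is Killing and $E_k^\perp$ is geodesic (and both are integrable), $M$ is locally isometric to a warped product $N \times_\rho M_k$, where $N$ is a leaf of $E_k^\perp$ and $\rho$ is characterized up to a multiplicative constant by $H_k = -\nabla^N \log \rho$. Because $H_k \in E_0 \subset TN$, the relation $X(\rho) = -\rho \bp{H_k, X}$ gives $X(\rho) = 0$ for $X$ tangent to $E_j|_N$ with $0 < j < k$, so $\rho$ descends to a function $\rho_k$ on the leaf $M_0$ of $E_0$ inside $N$.

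To finish, the leaf $N$ inherits the orthogonal product net $(E_j|_N)_{j=0}^{k-1}$, and I would verify that each $E_j|_N$ for $0 < j < k$ remains a Killing foliation of $N$: the Levi-Civita connection on $N$ is the restriction of the ambient one (because $TN = E_k^\perp$ is geodesic in $M$), the mean curvature normal $H_j \in E_0 \subset TN$ transfers unchanged, and sphericality of $E_j|_N$ in $N$ follows from that of $E_j$ in $M$. The induction hypothesis then decomposes $N$ as $M_0 \times_{\rho_1} M_1 \times \cdots \times_{\rho_{k-1}} M_{k-1}$; composing this with $M \cong N \times_{\rho_k} M_k$ yields the desired warped product form of $g$. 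The main obstacle I anticipate is verifying that the complementary distribution $\tilde E_j^\perp := E_j^\perp \cap TN$ is geodesic inside $N$, which is what the Killing descent really requires; but this follows cleanly, since for $X, Y \in \tilde E_j^\perp$ one has $\nabla_X Y \in E_j^\perp$ (as $E_j^\perp$ is geodesic in $M$) and $\nabla_X Y \in TN = E_k^\perp$ (as $E_k^\perp$ is geodesic in $M$), so $\nabla_X Y \in \tilde E_j^\perp$, and this coincides with $\nabla^N_X Y$ because $N$ itself is geodesic in $M$.
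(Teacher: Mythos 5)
The paper does not actually prove this theorem: it is stated as a citation, deferring entirely to Proposition/Theorem results of \cite{Meumertzheim1999} (the forward direction being recorded separately as \cref{prop:tpProps}). So there is nothing in the paper to compare your converse argument against line by line; what I can do is assess your route on its own terms. Your key structural step --- that each mean curvature normal $H_i$ lies in $E_0$ --- is proved correctly and cleanly: for non-null $X \in \Gamma(E_i)$, umbilicity gives $(\nabla_X X)^{E_i^{\perp}} = \bp{X,X}H_i$, while geodesy of each $E_j^{\perp}$ ($j \neq i$, $j>0$) forces $\nabla_X X \in E_j^{\perp}$, so $H_i \in \bigl(\bigobot_{j>0}E_j\bigr)^{\perp} = E_0$. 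The verification that the Killing property descends to the leaf $N$ of $E_k^{\perp}$ (using that $N$ is totally geodesic, so $\nabla^N$ is the restriction of $\nabla$, and intersecting the two geodesic distributions $E_j^{\perp}$ and $E_k^{\perp}$) is also sound. The one place where your argument leans on an unproved black box is Hiepko's decomposition theorem for the single-factor case $TM = E_k^{\perp}\obot E_k$: that is essentially the entire analytic content of the $k=1$ instance of the theorem you are proving, so the induction reduces the general case to it but does not eliminate it. Relatedly, Hiepko's theorem in its local form only produces a warped product metric on a neighborhood; the statement here is global on the connected product manifold $M = \prod_{i=0}^k M_i$, so you still need to observe that the locally defined warping functions $\rho_i$ (determined up to a multiplicative constant by $\d\log\rho_i = -H_i^{\flat}$ on $M_0$) patch together over the connected leaf $M_0$, and that the induced metrics $g_i$ on the fibres are consistent across leaves. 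These are standard but should be said. With those two caveats acknowledged, your outline is a correct and natural proof strategy, and arguably more informative than the paper's bare citation.
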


In particular one should note that for a warped product, the manifolds $M_i$ are spherical submanifolds of $M$. This is an important observation in constructing warped products. We say a warped product is \emph{proper} if none of the spherical factors are geodesic submanifolds. One can check (see proposition~2 in \cite{Meumertzheim1999}) that this is equivalent to requiring $\rho_i$ to be non-constant.

The above theorem also shows that any manifold can be made into a spherical submanifold (of some other manifold). Hence this notion is only interesting with respect to a fixed manifold.

\section{Warped product decompositions of Spaces of Constant Curvature} \label{sec:wpSCC}

In this section we study warped product decompositions of $\eunn(\kappa)$ where $\kappa$ may equal zero. Let $M = M_0 \times_{\rho_1} M_1 \times \cdots \times_{\rho_k} M_k$ be a warped product and $\psi : M \rightarrow \eunn(\kappa)$ a warped product decomposition of  $\eunn(\kappa)$.Fix $\bar{p} \in \psi(M)$. Let $H_i =  - \nabla (\log \rho_i)$ be the mean curvature vector field associated to the canonical foliation $L_i$ generated by $M_i$ (see \cref{prop:tpProps}). Let $V_i := T_{\bar{p}_i} M_i$ for each $i$ and $z_i := H_i|_{\bar{p}} \in V_0$ for $i > 0$. Then note that

\begin{equation}
	T_{\overline{p}} M = \bigobot\limits_{i=0}^k V_i
\end{equation}

It follows from corollary~2 in \cite{Meumertzheim1999} that the mean curvature vectors satisfy the following equation for $i \neq j$:

\begin{equation} \label{eq:wpSccMean}
	\bp{z_i, z_j} = - \kappa
\end{equation}

In this case we say that $\psi$ is a warped product decomposition of $\eunn(\kappa)$ associated with the initial data $(\bar{p};\bigobot\limits_{i=0}^k V_i; a_1,...,a_k)$ where $a_i := \kappa\bar{p} - z_i$.

Conversely, let $\overline{p} \in \eunn(\kappa)$ where $n \geq 2$ and consider the following decomposition of $T_{\overline{p}} \eunn(\kappa)$, $T_{\overline{p}} \eunn = \bigobot\limits_{i=0}^k V_i$ into non-trivial subspaces (hence non-degenerate) with $k \geq 1$. Suppose $z_1,...,z_k \in V_0$ satisfy \cref{eq:wpSccMean}. Let $a_i := \kappa\bar{p} - z_i$ and assume additionally that the subset of non-zero $a_i$ are linearly independent. In this case, we say that $(\bar{p};\bigobot\limits_{i=0}^k V_i; a_1,...,a_k)$ are \emph{initial data} for a warped product decomposition of $\eunn(\kappa)$. We will show in this article that in a space of constant curvature there always exists a warped product decomposition associated with any given initial data. It follows from \cref{thm:sphSCC} that in the category of Riemannian manifolds with $n > 2$, this property characterizes spaces of constant curvature.

The additional condition requiring the $a_i$ to be linearly independent trivially holds in Euclidean space and in motivating applications. The reason we make this assumption will become more apparent later. Here is an optional lemma, which is given for completeness, and hints at why we make this assumption.

\begin{lemma}
	Suppose $a_1,\dotsc,a_k$ are linearly independent pair-wise orthogonal lightlike vectors. Then there exist vectors $b_1,\dotsc,b_k$ such that $\bp{a_i , b_j} = \delta_{ij}$ and $\bp{b_i , b_j} = 0$.
\end{lemma}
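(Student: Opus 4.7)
The plan is to construct the $b_i$ in two stages: first produce any vectors dual to the $a_i$ under the metric, then correct them inside the isotropic span of the $a_j$'s to be mutually null. Throughout I will repeatedly exploit the fact that $\mathrm{span}\{a_1,\dotsc,a_k\}$ is totally isotropic (each $a_i$ is lightlike and $\langle a_i,a_j\rangle=0$ for all $i,j$), so adding combinations of the $a_j$ to any vector never affects pairings $\langle a_i,\cdot\rangle$.

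First I would produce auxiliary vectors $c_1,\dotsc,c_k$ satisfying $\langle a_i,c_j\rangle=\delta_{ij}$. Since the $a_i$ are linearly independent and the ambient metric on $\E^n_\nu$ is non-degenerate, the linear map
\begin{equation}
L:\E^n_\nu\longrightarrow\R^k,\qquad L(v)=\bigl(\langle a_1,v\rangle,\dotsc,\langle a_k,v\rangle\bigr)
\end{equation}
is surjective; indeed, if $L(v)$ were orthogonal to some non-zero $(\lambda_1,\dotsc,\lambda_k)$ for every $v$, then $\sum_i\lambda_i a_i$ would be in the radical of the ambient metric, contradicting independence. Pulling back the standard basis of $\R^k$ yields the $c_j$.

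Next I would look for the $b_j$ in the form
\begin{equation}
b_j=c_j+\sum_{l=1}^{k}\alpha_{jl}\,a_l,
\end{equation}
with scalars $\alpha_{jl}\in\R$ to be determined. Because the $a_l$ are pair-wise orthogonal and orthogonal to themselves, the condition $\langle a_i,b_j\rangle=\delta_{ij}$ is automatically preserved for any choice of $\alpha_{jl}$. Expanding $\langle b_i,b_j\rangle$ and again using $\langle a_l,a_m\rangle=0$ together with $\langle a_l,c_j\rangle=\delta_{lj}$ gives
\begin{equation}
\langle b_i,b_j\rangle=\langle c_i,c_j\rangle+\alpha_{ij}+\alpha_{ji}.
\end{equation}
Hence it suffices to choose any symmetric solution of $\alpha_{ij}+\alpha_{ji}=-\langle c_i,c_j\rangle$; the canonical choice $\alpha_{ij}:=-\tfrac12\langle c_i,c_j\rangle$ works and gives null, pair-wise orthogonal $b_j$ with the required duality.

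There is no real obstacle here; the only subtle point is checking that the correction by $a_l$'s does not disrupt the dual pairing, which is immediate from the isotropy of $\mathrm{span}\{a_1,\dotsc,a_k\}$. The result can be viewed as the standard fact that a $k$-dimensional totally isotropic subspace of a non-degenerate pseudo-Euclidean space extends to a hyperbolic $2k$-dimensional subspace, with the $\{a_i,b_j\}$ forming a Witt (null) basis of it.
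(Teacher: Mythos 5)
Your proof is correct, but it takes a genuinely different route from the paper's. The paper argues inductively: it first shows (by contradiction, using the rank--nullity theorem applied to the very same map you call $L$) that there is a $b_1\in\bigcap_{i\geq 2}a_i^\perp$ with $\bp{a_1,b_1}=1$, and then repeats the argument for $\{a_2,\dotsc,a_k\}$ inside $\spa{a_1,b_1}^\perp$, which is a non-degenerate subspace of two dimensions less. You instead produce all the dual vectors $c_1,\dotsc,c_k$ in one shot from the surjectivity of $L$ (your justification of surjectivity --- a nontrivial kernel vector of $L^{\ast}$ would put $\sum_i\lambda_ia_i$ in the radical of the metric --- is the contrapositive of the paper's contradiction, so the underlying linear algebra is identical), and then correct them by elements of the totally isotropic span $\spa{a_1,\dotsc,a_k}$ via $\alpha_{ij}=-\tfrac12\bp{c_i,c_j}$. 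Your computation $\bp{b_i,b_j}=\bp{c_i,c_j}+\alpha_{ij}+\alpha_{ji}$ is right, since the cross terms contribute exactly $\alpha_{ji}\bp{c_i,a_i}+\alpha_{ij}\bp{a_j,c_j}$ and the $\bp{a_l,a_m}$ terms vanish. What your approach buys is that the conditions $\bp{b_i,b_j}=0$ (including $i=j$) are handled explicitly and uniformly; the paper's induction guarantees $\bp{a_i,b_j}=\delta_{ij}$ and $\bp{b_i,b_j}=0$ for $i\neq j$ by construction but is silent about arranging $\bp{b_i,b_i}=0$, which still requires exactly your kind of correction $b_i\mapsto b_i-\tfrac12 b_i^2a_i$. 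What the paper's induction buys is that each new $b_j$ is found in an explicitly smaller non-degenerate subspace, which makes the Witt-basis structure of $\spa{a_1,b_1}\obot\cdots\obot\spa{a_k,b_k}$ visible step by step; your closing remark identifies the same structure after the fact.
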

\begin{proof}
	Suppose to the contrary that for any $b_1$ satisfying $\bp{b_1, a_i} = 0$ for $i > 1$ we have $\bp{b_1, a_1} = 0$. Thus
	
	\begin{equation}
		\cap_{i=2}^k a_i^\perp \subseteq a_1^\perp
	\end{equation}
	
	Define $T : V \rightarrow \R^k$ by:
	
	\begin{equation}
		T(v) = (\bp{a_1 , v},\dotsc,\bp{a_k , v})
	\end{equation}
	
	By hypothesis we have $\dim \ker T \geq n - (k-1)$, hence $\dim \Ima T \leq k-1$ by the rank-nullity theorem. Thus $a_1^\flat,\dotsc,a_k^\flat$ are linearly dependent, a contradiction.
	
	Thus there exists $b_1 \in \cap_{i=2}^k a_i^\perp$ with $\bp{a_1,b_1} = 1$. The result then follows by induction. Indeed the next step is to find $b_2$ by applying the above result to $\{a_2,\dotsc,a_k\} \subset \spa{a_1,b_1}^\perp$ making use of the fact that $\spa{a_1,b_1}$ is non-degenerate by construction.
\end{proof}

It has been shown by Nolker in \cite{Nolker1996} that given any initial data for Riemannian spaces of constant curvature, there exists a unique warped product decomposition associated with the initial data. In this article we will show that given any initial data for a WP-decomposition of $\eunn(\kappa)$, there exists a WP-decomposition associated with the initial data. This WP-decomposition is probably uniquely determined but we don't use or prove this supposition.

One can also deduce that from corollary~2 in \cite{Meumertzheim1999} that the Hessian $H$ of each warping function $\rho$ of a space of constant curvature satisfies the following equation on the geodesic factor:

\begin{equation}
	H(X,Y) = -\kappa \rho \bp{X,Y}
\end{equation}

This proves the following fact:

\begin{lemma} \label{lem:EunnKapProdDecomp}
	A space of constant non-zero curvature does not admit product decompositions.
\end{lemma}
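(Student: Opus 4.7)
The plan is to invoke the Hessian identity displayed immediately above the lemma,
\[
	H(X,Y) = -\kappa \rho \bp{X,Y},
\]
which is asserted to hold on the geodesic factor for every warping function $\rho$ of a warped product decomposition of $\eunn(\kappa)$. A product decomposition is the degenerate case of a warped product in which every warping function is identically $1$, so this identity should immediately force $\kappa=0$.

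More precisely, I would proceed by contradiction. Suppose $\eunn(\kappa)$ admits a product decomposition $M_0 \times M_1 \times \cdots \times M_k$ with $k\geq 1$ and $\dim M_i>0$ for $i>0$. A genuine product has at least two factors of positive dimension, so after relabelling we may also assume $\dim M_0\geq 1$. Regard this product as the warped product with warping functions $\rho_1,\dotsc,\rho_k \equiv 1$. Then for each $i\geq 1$ the Hessian $H$ of $\rho_i$ on $M_0$ vanishes identically, and the identity quoted above becomes
\[
	0 \;=\; -\kappa \, \bp{X,Y} \qquad \text{for all } X,Y\in TM_0.
\]
The induced metric on $M_0$ is non-degenerate (since $M_0$ is a pseudo-Riemannian manifold) and $\dim M_0\geq 1$, so there exist tangent vectors $X,Y\in TM_0$ with $\bp{X,Y}\neq 0$. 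Hence $\kappa=0$, contradicting the hypothesis.

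The argument is essentially a one-line consequence of the cited Hessian identity, so there is no real obstacle; the only point that requires a moment's care is the relabelling step, which is needed because the definition of a warped product allows the geodesic factor $M_0$ to be zero-dimensional, whereas the Hessian identity is vacuous in that situation. As soon as one picks any positive-dimensional factor to play the role of $M_0$, the identity bites and the conclusion is immediate.
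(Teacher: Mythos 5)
Your proposal is correct and follows exactly the route the paper intends: the paper derives the Hessian identity $H(X,Y) = -\kappa\rho\bp{X,Y}$ from corollary~2 of \cite{Meumertzheim1999} and then simply asserts that it ``proves the following fact,'' which is precisely your observation that constant warping functions force $\kappa\bp{X,Y}=0$ and hence $\kappa=0$. Your extra care about relabelling a positive-dimensional factor to serve as the geodesic factor is a reasonable tidying of a detail the paper leaves implicit.
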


\section{Warped product decompositions of pseudo-Euclidean space} \label{sec:WPdecompEunn}

\subsection{Spherical submanifolds of pseudo-Euclidean space}

We first describe the spherical submanifolds of pseudo-Euclidean space. The following theorem is a generalization of Lemma~5 in \cite{Nolker1996} to pseudo-Euclidean space.

\begin{theorem}[Spherical submanifolds of $\eunn$] \label{thm:spherSub}
	Let $\overline{p} \in \glssymbol{eunn}$ be arbitrary, $V \subseteq \eunn$ a non-degenerate subspace with $m := \dim V \geq 1$, $\mu := \ind V$ and $z \in V^{\perp}$. Let $\tilde{\kappa} := z^2$, $a := -z$ and $W = \R a \obot V$. There is exactly one $m$-dimensional connected and geodesically complete spherical submanifold $\tilde{N}$ with $\overline{p} \in \tilde{N}$, $T_{\overline{p}} \tilde{N} = V$ and having mean curvature vector at $\overline{p}$, z. $\tilde{N}$ is an open submanifold of N; N is referred to as the spherical submanifold determined by $(\overline{p},V,a)$ and is given as follows (where $\simeq$ means isometric to):
	
	\begin{enumerate}[(a),style=multiline]
		\item \label{it:sphEunna} $a = 0$ iff N is geodesic, in this case $N \simeq \E^m_{\mu}$ \\
		\begin{equation}
			N = \overline{p} + V
		\end{equation}
		\item \label{it:sphEunnb} $a$ is timelike, then $\mu \leq \nu - 1$ and $N \simeq H^m_{\mu}(\tilde{\kappa})$
		\item \label{it:sphEunnc} $a$ is spacelike, then $N \simeq S^m_{\mu}(\tilde{\kappa})$ \\
		For cases (b) and (c), let $c = \overline{p} -\frac{a}{\tilde{\kappa}}$ be the center of N, then N is given as follows:
		\begin{equation}
			N = c + \{ p \in W \: | \: p^2 = \frac{1}{\tilde{\kappa}} \}
		\end{equation}
		\item \label{it:sphEunnd} $a$ is lightlike, then $\mu \leq \nu - 1$ and $N \simeq E^m_{\mu}$
		\begin{equation}
			N = \overline{p} + \{ p - \frac{1}{2} p^2 a  \: | \: p \in V \}
		\end{equation}
	\end{enumerate}
\end{theorem}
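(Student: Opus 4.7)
Uniqueness of $\tilde N$ is immediate from \cref{lem:unqSph}: any two connected, geodesically complete spherical submanifolds meeting at a point with common tangent space and common mean curvature vector there coincide. The work is therefore existence: in each case I need to exhibit an $N$ passing through $\bar{p}$ with $T_{\bar{p}} N = V$ and mean curvature $z$ at $\bar{p}$, verify $N$ is spherical, and take $\tilde N$ to be the connected component of $N$ through $\bar{p}$, checking it is geodesically complete.

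Case \ref{it:sphEunna} is immediate: $N = \bar{p} + V$ is a totally geodesic affine submanifold isometric to $\E^m_\mu$, with vanishing mean curvature, which equals $z = -a = 0$. For cases \ref{it:sphEunnb} and \ref{it:sphEunnc}, I would note that $W = \R a \obot V$ is non-degenerate of dimension $m+1$ and the set $N - c$ equals the central hyperquadric $W(\tilde\kappa)$ of the pseudo-Euclidean subspace $W$. The signature count $\ind W = \mu + 1$ (timelike $a$) or $\mu$ (spacelike $a$), combined with the formula for the signature of $\eunn(\kappa)$ given in \cref{sec:notNCon}, yields the claimed isometries with $H^m_\mu(\tilde\kappa)$ and $S^m_\mu(\tilde\kappa)$ respectively. \Cref{prop:eunnStdHQuad}, applied inside $W$, shows $W(\tilde\kappa)$ is spherical with mean curvature normal $-r/r^2$; since $c + W$ is a totally geodesic affine subspace of $\eunn$, this transfers to $N$ verbatim. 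At $\bar{p}$ one then has $H(\bar{p}) = -\tilde\kappa \cdot a/\tilde\kappa = -a = z$ and $T_{\bar{p}} N = (\bar{p} - c)^\perp \cap W = a^\perp \cap W = V$. Geodesic completeness of each connected component of $W(\tilde\kappa)$ also comes from \cref{prop:eunnStdHQuad}, so $\tilde N$ is a well-defined open submanifold of $N$.

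For case \ref{it:sphEunnd} the argument mirrors the proof of \cref{prop:eunnPunn}. I parametrize $\psi : V \to \eunn$ by $\psi(p) = \bar{p} + p - \tfrac{1}{2} p^2 a$. Since $a \in V^\perp$ and $a^2 = 0$, the identity $\psi_* v = v - \bp{p,v}\, a$ gives $\bp{\psi_* w,\psi_* v} = \bp{w,v}$, so $\psi$ is an isometric embedding of $V \cong \E^m_\mu$ onto $N$ with $T_{\bar{p}} N = V$, whence $N$ is connected and geodesically complete. A short computation strictly analogous to the one in \cref{prop:eunnPunn} yields $\bar\nabla_{\psi_* w} \psi_* v = \psi_* \nabla_w v - \bp{w,v}\, a$, which shows $N$ is umbilical with mean curvature vector $-a = z$ at every point; constancy of $a$ as an ambient vector field then makes it parallel in the normal connection, so $N$ is spherical. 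The main (and rather modest) obstacle is this direct second-fundamental-form computation in case \ref{it:sphEunnd}, with the rest of the argument reducing to applications of \cref{prop:eunnStdHQuad}, \cref{lem:unqSph}, and signature bookkeeping.
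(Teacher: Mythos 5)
Your proposal is correct and follows essentially the same route as the paper: uniqueness via \cref{lem:unqSph}, cases (b) and (c) by applying \cref{prop:eunnStdHQuad} to the hyperquadric of $W$ translated by $c$, and case (d) by the computation underlying \cref{prop:eunnPunn}. You simply spell out more of the "easily checked" details (the value of $H$ at $\bar{p}$, the tangent space, and the signature bookkeeping) than the paper does.
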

\begin{remark} \label{rem:tildN}
	$\tilde{N} = N$ except in the following two cases (which are anti-isometric): When $N \simeq H^m_0(\tilde{\kappa})$ or $N \simeq S^m_m(\tilde{\kappa})$, N is disconnected \cite[Section~4.6]{barrett1983semi} and so $\tilde{N}$ is given as follows:
	\begin{equation}
		\tilde{N} = N \cap (c+ \{p \in W \: | \: \bp{a,p} > 0 \})
	\end{equation}
\end{remark}
\begin{proof}
	First we note that it suffices to show that there exists a single connected and geodesically complete sphere satisfying the initial conditions. By \cref{lem:unqSph}, it must be unique.

	\Cref{it:sphEunna} is clear. For \cref{it:sphEunnb,it:sphEunnc}, it follows from \cref{prop:eunnStdHQuad} that $N$ is a sphere and the initial conditions are easily checked. The connectedness properties follow from lemma~4.25 in \cite{barrett1983semi}. It follows from lemma~4.29 in \cite{barrett1983semi} that $N$ is geodesically complete.
	
	\Cref{it:sphEunnd} follows from \cref{prop:eunnPunn}.
\end{proof}
\begin{remark}
	See \cite{chen2011pseudo} for a different proof.
\end{remark}

Since circles are one dimensional spherical submanifolds, we can use the above theorem to describe the circles in pseudo-Euclidean space.

\begin{example}[Proper Circles in pseudo-Euclidean space] \label{ex:eunnPropCirc} 
	Suppose $(\bar{p},\bar{V},k \bar{Y})$ are initial conditions for a proper circle as in \cref{lem:circUniq} with $\varepsilon_0 := \bar{V}^2 = \pm 1$, $\varepsilon_1 := \bar{Y}^2 = \pm 1$ and $\norm{k\bar{Y}} \neq 0$. We now describe the circle determined by this data.
	
	By \cref{ex:geoInSph} the proper circle determined by these initial conditions determine a spherical submanifold of $\eunn$ characterized by $(\bar{p},\R \bar{V},  \varepsilon_0 k \bar{Y})$. Now let $H := \varepsilon_0 k \bar{Y}$, $\kappa := \bp{H,H} = \varepsilon_1 k^2$ and $c := \overline{p} + \frac{H}{\kappa} = \bar{p} - \frac{\varepsilon_0 \varepsilon_1 \bar{Y} }{k}$.
	
	\begin{parts}
		\item Euclidean circle, $\gamma = \Si^1$: $\varepsilon_0 = \varepsilon_1 = \pm 1$ \\
		
		\begin{equation}
			\gamma(t) = c + \frac{1}{k}(\sin(k t) \bar{V} - \cos(k t) \bar{Y})
		\end{equation}
		
		\item Hyperbolic circle, $\gamma = H^1$: $\varepsilon_0 = 1,  \varepsilon_1 = - 1$
		\item de Sitter circle, $\gamma = \Si^1_1$: $\varepsilon_0 = - 1,  \varepsilon_1 =  1$ \\
		In the last two cases (which are anti-isometric), $\gamma$ is given as follows: 
		
		\begin{equation}
			\gamma(t) = c + \frac{1}{k}(\sinh(k t) \bar{V} - \varepsilon_0\varepsilon_1 \cosh(k t) \bar{Y})
		\end{equation}
	\end{parts}
\end{example}

One can give a similar example for geodesics and null circles. 

%

\subsection{Warped product decompositions of pseudo-Euclidean space}

Our classification of the warped product decompositions of $\eunn$ is based on the fact that a specification of the tangent spaces and mean curvature normals of the spherical foliations of a warped product at one point $\overline{p}$, uniquely determines a warped product decomposition in a neighborhood of $\overline{p}$. We now carry out this classification as follows. Suppose $\psi : N_0 \times_{\rho_1} N_1 \times \cdots \times_{\rho_k} N_k \rightarrow \eunn$ is a warped product decomposition of $\eunn$ associated with initial data $(\bar{p};\bigobot\limits_{i=0}^k V_i; -z_1,...,-z_k)$. By \cref{eq:wpSccMean}, the mean curvature vectors at $\bar{p}$ satisfy the following equation:

\begin{equation}
	\bp{z_i,z_j} = 0 \quad i \neq j
\end{equation}

We now only consider the case $\nu \leq 1$ as the other signatures are straightforward generalizations of these standard ones. In this case, we will use \cref{thm:spherSub} to classify $N_i$ up to homothety as follows. Say $z_1,...,z_l = 0$ and the remaining are non-zero, then for $i = 1,...,l$ the $N_i$ are pair-wise orthogonal planes passing through $\overline{p}$. We now consider the remaining possibilities:

\begin{parts}
	\item Since the $z_i$ are orthogonal, there is at most one lightlike direction, say $z_{l+1}$. The remaining lightlike $z_i$ are proportional to $z_{l+1}$, but since we assume the non-zero $z_i$ are linearly independent, we will work with only one lightlike vector $z_{l+1}$. Then $N_{l+1}$ a paraboloid isometric to Euclidean space. The orthogonality relations force the remaining $z_i$ to be space-like and hence the remaining $N_i$ are Euclidean spheres.
	\item Similarly, at most one of the $z_i$ can be timelike, say $z_{l+1}$. Then $N_{l+1}$ is isometric to hyperbolic space. The orthogonality relations force the remaining $z_i$ to be space-like and hence the remaining $N_i$ are Euclidean spheres.
	\item The remaining $z_i$ are spacelike. If $\ind V_0 = 1$ or $\ind V_0 = 0$ in Euclidean space, then the remaining $N_i$ are Euclidean spheres. If $\ind V_0 = 0$ in Minkowski space, then $\ind V_j = 1$ for precisely one $j \geq 1$, then $N_j$ is de Sitter space while the remaining $N_i$ are Euclidean spheres.
	\item All $z_i$ are zero. Then each $N_i$ is an affine plane and the warped product is a product of planes.
\end{parts}

We summarize our findings in the following theorem.

\begin{theorem}[Warped products in $\E^n$ and $M^n$] \label{thm:wpInEunn}
	Suppose $N = N_0 \times_{\rho_1} N_1 \times \cdots \times_{\rho_k} N_k$ is a proper warped product decomposition of an open subset of $\eunn$. If at most one of the $N_i$ are intrinsically flat, then N is isometric to one of the following warped products:

	If $\eunn$ is Euclidean space:
	\begin{align}
		\E^m \times _{\rho_1} S^{n_1} \times \cdots \times _{\rho_s} S^{n_s}
	\end{align}
	
	If $\eunn$ is Minkowski space:
	\begin{align}
		M^m \times _{\lambda_1} \E^{n_1} \times _{\rho_2} S^{n_2} \times \cdots \times _{\rho_s} S^{n_s} \\
		M^m \times _{\tau_1} H^{n_1} \times _{\rho_2} S^{n_2} \times \cdots \times _{\rho_s} S^{n_s} \\
		\E^m \times _{\rho_1} dS^{n_1} \times _{\rho_2} S^{n_2} \times \cdots \times _{\rho_s} S^{n_s} \\
		M^m \times _{\rho_1} S^{n_1} \times _{\rho_2} S^{n_2} \times \cdots \times _{\rho_s} S^{n_s}
	\end{align}
	
	\noindent where $\nabla \rho_i$,$\nabla \tau_i$,$\nabla \lambda_i$ is a spacelike,timelike, lightlike vector field respectively.
\end{theorem}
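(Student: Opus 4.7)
The plan is to read off each factor $N_i$ from \cref{thm:spherSub} applied at $\bar{p}$ with the subspace $V_i\subseteq T_{\bar p}\eunn$ and the normal vector $-z_i$, and then enumerate the possibilities using the orthogonality relation $\bp{z_i,z_j}=0$ for $i\neq j$ coming from \cref{eq:wpSccMean} (with $\kappa=0$), together with the hypothesis that at most one of the $N_i$ is intrinsically flat. The case analysis in the \textbf{parts} block preceding the statement already carries out exactly this enumeration; the theorem is the repackaging of that analysis into a clean list, and the proof should just invoke it.

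First I would note that since the decomposition is proper, no $N_i$ with $i\geq 1$ is a geodesic factor, so each $z_i$ is nonzero (\cref{prop:tpProps} together with the discussion in \cref{sec:wpSCC}); by the initial-data hypothesis the $z_i$ are linearly independent. \Cref{thm:spherSub} then identifies $N_i$ up to isometry from the causal character of $z_i$: spacelike yields $S^{n_i}_{\mu_i}$, timelike yields $H^{n_i}_{\mu_i}$ (requiring $\mu_i\leq\nu-1$), and lightlike yields a paraboloid isometric to $\E^{n_i}_{\mu_i}$ (again requiring $\mu_i\leq\nu-1$), where $\mu_i:=\ind V_i$. The flatness hypothesis forces at most one $z_i$ to be lightlike, because two linearly independent orthogonal lightlike $z_i$ would produce two flat spherical factors.

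Next I would split on $\nu$. If $\nu=0$, the bound $\mu_i\leq\nu-1=-1$ rules out all timelike and lightlike $z_i$, so every $z_i$ is spacelike, every $N_i$ ($i\geq 1$) is a round sphere $S^{n_i}$, and $N_0\simeq\E^m$; this is the single Euclidean form. If $\nu=1$, the signature sum $\ind V_0+\sum_{i\geq 1}\mu_i=1$ and the orthogonality relations combine to yield four mutually exclusive configurations. A single timelike $z_1$ forces $V_0$ Lorentzian (so $N_0\simeq M^m$) and all remaining $z_i$ spacelike with $\mu_i=0$, producing the $H^{n_1}$-line. A single lightlike $z_1$ similarly forces $V_0$ Lorentzian and the rest spacelike, producing the $\E^{n_1}$-line; note that timelike and lightlike $z_i$ cannot coexist, since in a Lorentzian $V_0$ a lightlike vector orthogonal to a timelike one must vanish. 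If all $z_i$ are spacelike, either $\ind V_0=1$ (then all $N_i$ are round spheres and $N_0\simeq M^m$), or $\ind V_0=0$ and exactly one $\mu_j=1$ (then $N_j\simeq dS^{n_j}$ and $N_0\simeq\E^m$); these are the last two listed forms. The assertion on the causal character of $\nabla\rho_i$, $\nabla\tau_i$, $\nabla\lambda_i$ is immediate from $H_i=-\nabla(\log\rho_i)$ and $H_i|_{\bar p}=z_i$ (\cref{prop:tpProps}).

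The main obstacle is bookkeeping: one must track three constraints simultaneously, namely the pair-wise orthogonality and linear independence of the $z_i$, the signature identity $\ind V_0+\sum_{i\geq 1}\mu_i=\nu$, and the at-most-one-flat hypothesis, and then verify that the listed cases are both mutually exclusive and exhaustive. The key observation that keeps the list short is the Lorentzian dichotomy that an orthogonal family of nonzero vectors in a signature-$1$ space contains at most one non-spacelike vector.
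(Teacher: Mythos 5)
Your proposal is correct and follows essentially the same route as the paper, which derives this theorem from the case analysis immediately preceding it: classify each spherical factor via \cref{thm:spherSub} according to the causal character of $z_i=H_i|_{\bar p}$, and enumerate the admissible configurations using the pairwise orthogonality from \cref{eq:wpSccMean} together with the signature constraint. The only cosmetic difference is that you exclude a second lightlike $z_i$ via the at-most-one-flat-factor hypothesis, whereas the paper uses the linear independence of the nonzero $a_i$ combined with the fact that orthogonal lightlike vectors in Minkowski space are proportional; both are valid.
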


The above theorem shows that there are at 1 and 4 distinct types of proper singly warped products in Euclidean and Minkowski space respectively. One can show that the multiply warped products can be built up from the singly warped products by iteratively decomposing the geodesic factor of the warped product into another warped product which is ``compatible'' with the original. Thus we only describe a special subset of warped products for simplicity.

The following theorem describes this interesting class of warped products. Its proof can be deduced from Theorem~7 in \cite{Nolker1996}. It is a generalization of that theorem to pseudo-Euclidean space.

\begin{theorem}[ Standard Warped Products in $\eunn$ \cite{Nolker1996}] \label{thm:WPDecomps}
	Fix $\overline{p} \in \eunn$ where $n \geq 2$ and the following decomposition of $T_{\overline{p}} \eunn$, $T_{\overline{p}} \eunn = \bigobot\limits_{i=0}^k V_i$ into non-trivial subspaces (hence non-degenerate) with $k \geq 1$. Suppose $a_1,...,a_k \in V_0$ are pair-wise orthogonal. Let $\kappa_i := a_i^2$ and $\epsilon_i := \sgn \kappa_i$. We consider the following warped decompositions:
	
	\begin{description}
		\item[non-null warped decomposition] Let $\mu \geq 0$
		\begin{equation}
			\kappa_1 \leq \cdots \leq \kappa_{\mu} < 0 < \kappa_{\mu+1} \leq \cdots \leq \kappa_k
		\end{equation}
		
		\noindent In this case, let $c = \overline{p} - \sum\limits_{i=1}^k \frac{a_i}{\kappa_i}$ and $c_i = \overline{p} - \frac{a_i}{\kappa_i}$ for each $i = 1,...,k$.
		\item[null warped decomposition] $k = 1$, $a_1 := a$, $\kappa_1 =  a^2 = 0$ but $a \neq 0$, i.e. $a$ is lightlike.
		
		\noindent In this case, fix a lightlike vector $b \in V_0$ such that $\bp{a,b} = 1$ and let $c = \overline{p} - b$.
	\end{description}
	
	\noindent Now, define $N_0$ as follows:
	
	\begin{equation}
		N_0 := c + \{p \in V_0 | \bp{a_i, p} > 0 \text{ for all i }  \}
	\end{equation}
	
	Note that $N_0$ is an open subset of the plane determined by $(\overline{p},V_0,0)$. For $i=1,...,k$, let $N_i$ be the spherical submanifold of $\eunn$ determined by $(\overline{p},V_i,a_i)$. Define
	
	\begin{equation}
		\rho_i : \begin{cases}
			N_0 & \rightarrow \R_+ \\
			p_0 & \mapsto \bp{a_i, p_0 - c} = 1 + \bp{a_i, p_0 - \overline{p}}
		\end{cases}
	\end{equation}
	
	For $i = 1,...,k$, let $W_i := \R a_i \obot V_i$ and $P : \eunn \rightarrow W_i$ be the orthogonal projection. Then the map
	
	\begin{equation} \label{eq:psiGenFormEunn}
		\psi : \begin{cases}
		N_0 \times _{\rho_1} N_1 \times \cdots  \times _{\rho_k} N_k & \rightarrow \eunn \\
		(p_0,...,p_k) & \mapsto p_0 + \sum\limits_{i=1}^k \rho_i(p_0)(p_i - \overline{p})
		\end{cases}
	\end{equation}
	
	\noindent is an isometry onto the following set\footnote{Note that $\sgn 0 = 0$, otherwise for $a \neq 0$, $\sgn a$ is the sign of $a$.}:
	
\begin{equation}
	\Ima(\psi) := \begin{cases}
		c + \{ p \in \eunn \: | \: \sgn (P_i(p))^2 = \epsilon_i, \text{ for each } i = 1,...,k \} & \text{non-null case} \\
		c + \{ p \in \eunn \: | \: \bp{a,p} > 0 \} & \text{null case}
	\end{cases}
\end{equation}

%

$\Ima(\psi)$ is dense in $\eunn$ only for a non-null warped decomposition when each $W_i$ for $i = 1,...,k$ is Euclidean or anti-isometric to a Euclidean space.
\end{theorem}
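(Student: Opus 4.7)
The plan is to prove the claim in three stages: computing $d\psi$, verifying the pullback metric, and characterizing the image (including the density criterion). This closely follows the strategy of N\"olker's Theorem~7 in \cite{Nolker1996}, now adapted to arbitrary signature.

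First I would compute $d\psi$ at an arbitrary $(p_0,\dotsc,p_k)$. Since $\rho_i$ is affine in $p_0$ with differential $X_0 \mapsto \bp{a_i,X_0}$, one gets
\[
d\psi(X_0,X_1,\dotsc,X_k) = X_0 + \sum_{i=1}^{k}\bigl[\bp{a_i,X_0}(p_i-\bar{p}) + \rho_i(p_0)\,X_i\bigr]
\]
for $X_0 \in V_0 = T_{p_0}N_0$ and $X_i \in T_{p_i}N_i \subset W_i$. The main computation is then to check that $\psi^*\bp{\cdot,\cdot}$ equals the warped product metric. I would split this into three cases. For $i\neq j$ both positive, $\bp{d\psi\,X_i, d\psi\,X_j} = \rho_i\rho_j \bp{X_i,X_j}$ vanishes by expanding $X_i, X_j$ in the $\R a_\ell \oplus V_\ell$ decomposition and invoking pairwise orthogonality of the $a$'s and of the $V$'s. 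For the cross term $\bp{d\psi\,X_0, d\psi\,X_i}$, the decisive identity is $X_i \perp (p_i - c_i)$ in $W_i$, which holds because $N_i$ is the hyperquadric centred at $c_i$ in $W_i$; combined with $p_i - \bar{p} = (p_i-c_i) - a_i/\kappa_i$ this gives $\bp{p_i-\bar{p},X_i} = -\bp{a_i,X_i}/\kappa_i$, which precisely cancels the $\rho_i\bp{X_0,X_i}$ contribution. For $\bp{d\psi\,X_0, d\psi\,Y_0}$ with $X_0,Y_0 \in V_0$, expanding and applying $(p_i-c_i)^2 = 1/\kappa_i$ to compute $(p_i-\bar{p})^2$ shows that the $\bp{a_i,X_0}\bp{a_i,Y_0}$ corrections cancel in pairs, leaving $\bp{X_0,Y_0}$. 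In the null case the analogous calculation uses a lightlike partner $b$ for $a$ with $\bp{a,b}=1$ and the parabolic form of $N_1$ from \cref{thm:spherSub}~\ref{it:sphEunnd}.

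Next I would characterize the image and construct the inverse. In the non-null case, I would use the orthogonal splitting $\eunn = U_0 \obot W_1 \obot \cdots \obot W_k$, where $U_0 := V_0 \cap \spa{a_1,\dotsc,a_k}^\perp$, so that each $P_i$ is the orthogonal projection onto $W_i$. Evaluating $P_i$ on $\psi(p_0,\dotsc,p_k) - c$ yields an expression from which $\rho_i(p_0) > 0$ and the corresponding point $p_i \in \tilde{N}_i$ (the distinguished component of $N_i$ from \cref{rem:tildN}) are uniquely recoverable; the sign condition $\sgn(P_i p)^2 = \epsilon_i$ is exactly what ensures $\rho_i(p_0)$ is real, and positivity of $\rho_i$ selects the correct component. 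The null case is a direct analogue using the hyperbolic basis $\{a,b\}$ for the degenerate part of $V_0$. I expect the main obstacle to be the detailed bookkeeping in this inverse construction across the various connected components of each hyperquadric $N_i$, and in confirming that the recovered $p_0$ automatically lies in the orthant $N_0$.

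Finally, for the density claim: each condition $\sgn(P_i p)^2 = \epsilon_i$ cuts out an open subset whose complement contains the open set $\{p : \sgn(P_i p)^2 = -\epsilon_i\}$, and the latter is nonempty precisely when $W_i$ is indefinite. Hence $\Ima(\psi)$ is dense in $\eunn$ if and only if every $W_i$ is definite, i.e.\ Euclidean or anti-isometric to a Euclidean space. In the null case, $\Ima(\psi)$ is the open half-space $\{p : \bp{a,p}>0\}$, and hence is never dense.
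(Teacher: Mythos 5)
Your proposal is correct and follows essentially the same route as the paper's proof: both rely on the orthogonal splitting $\eunn = W_0 \obot W_1 \obot \cdots \obot W_k$ (your $U_0$ is the paper's $W_0$), the identities $(p_i-c_i)^2 = 1/\kappa_i$ and $\bp{p_i-c_i,X_i}=0$, an explicit inverse built from the projections $P_i$, and the hyperbolic pair $\{a,b\}$ in the null case. The only cosmetic difference is ordering — the paper first rewrites $\psi$ in the form adapted to $c + \bigobot W_i$ and then pushes forward, while you differentiate the original formula and expand afterwards — and your explicit argument for the density claim is a small addition the paper leaves implicit.
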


\begin{remark} \label{rem:WPEunnCon}
	Note that $\rho_i(\bar{p}) = 1$ for $i=1,...,k$. Also for each $p_i \in N_i$ we have $\psi(\bar{p},\dotsc,p_i,\dotsc,\bar{p}) = p_i$, hence $\psi(\bar{p},\dotsc,\bar{p}) = \bar{p}$.

	If the $N_i$ are required to be connected, then $\Ima(\psi)$ has to be modified slightly. For each $N_i$ that is disconnected (see the remark following \cref{thm:spherSub}), in addition to the restriction that $\sgn (P_i(p))^2 = \epsilon_i$ in the definition of $\Ima(\psi)$, add the restriction that $\scalprod{a_i}{P_i(p)} > 0$.
\end{remark}

\begin{proof}
	The idea of this proof is to assume \cref{eq:psiGenFormEunn} holds and then expand it by choosing an appropriate basis for $V_0$. In the expanded form we will be able to prove all the claims made in the theorem. We have the following two cases.

	\textbf{The non-null case:}  Let $W_0$ be the orthogonal complement of $\bigobot\limits_{i=1}^k \R a_i$ in $V_0$; which is well defined since $a_i^2 \neq 0$ for each $i$. Thus we have that 
	
	\begin{equation} \label{eq:VnotDecomp}
		V_0 = W_0 \obot \bigobot\limits_{i=1}^k \R a_i
	\end{equation}
	
	\noindent which implies:
	
	\begin{align}
		\eunn & = \bigobot\limits_{i=0}^k V_i \nonumber \\
		& = W_0 \obot \bigobot\limits_{i=1}^k \R a_i \obot \bigobot\limits_{i=1}^k V_i \nonumber \\
		& = W_0 \obot \bigobot\limits_{i=1}^k (\R a_i \obot V_i) \nonumber \\
		& = W_0 \obot \bigobot\limits_{i=1}^k W_i
	\end{align}
	
	Now let $P_i : \eunn \rightarrow W_i $ denote the orthogonal projection for $i = 0,...,k$. Then from \cref{eq:VnotDecomp}, we get the following orthogonal decomposition of $V_0$ which will be used extensively:
	
	\begin{equation} \label{eq:nonNullVnotDecomp}
		p = P_0p + \sum\limits_{i=1}^k \frac{1}{\kappa_i} \bp{a_i,p}a_i \quad \text{ for all } p \in V_0
	\end{equation}
	
	Now we use the above decomposition of $p \in V_0$ to write $\psi(p_0,...,p_k)$ adapted to the following affine decomposition of $\eunn$
	
	\begin{equation}
		\eunn = c + \bigobot\limits_{i=0}^k W_i 
	\end{equation}
	
	We get the following for $(p_0,...,p_k) \in N_0 \times \cdots \times N_k$
	
	\begin{equation} \label{eq:nonNullImDecomp}
		\psi(p_0,...,p_k) = c + P_0(p_0 - c) + \sum\limits_{i=1}^k \bp{a_i, p_0-c} (p_i-c_i)
	\end{equation}
	
%
	
	Now we prove that $\psi$ is injective: Let $(p_0,...,p_k), (q_0,...,q_k) \in N_0 \times \cdots \times N_k$ and suppose that $\psi(p_0,...,p_k) = \psi(q_0,...,q_k)$. From \cref{eq:nonNullImDecomp}, we deduce the following:
	
	\begin{align}
		P_0(p_0 - c) & = P_0(q_0 - c) \\
		\bp{a_i, p_0-c} (p_i-c_i) & = \bp{a_i, q_0-c} (q_i-c_i)	
	\end{align}
	
	Since for each $i = 1,...,k$, $(p_i-c_i)^2 = (q_i-c_i)^2 = \frac{1}{\kappa_i}$ and $\bp{a_i, p_0-c}, \bp{a_i, q_0-c} \in \R^+$, we deduce that $p_i = q_i$. Then \cref{eq:nonNullVnotDecomp} shows $p_0 = q_0$.
	
	Now for surjectivity: From \cref{eq:nonNullImDecomp} it's clear that $\psi(N_0 \times \cdots \times N_k) \subseteq \Ima(\psi)$. Given $p\in \Ima(\psi)$, using \cref{eq:nonNullImDecomp} in conjunction with \cref{eq:nonNullVnotDecomp} we can readily calculate the inverse $q = \psi^{-1}(p)$ given in components as follows:
	
	\begin{align}
		q_0 & = c + P_0(p-c) + \sum\limits_{i=1}^k \frac{\epsilon_i}{\sqrt{|\kappa_i|}} \norm{P_i(p-c)} a_i \\
		q_i & = c_i + \frac{1}{\sqrt{|\kappa_i|}} \frac{P_i(p-c)}{\norm{P_i(p-c)}} \quad i = 1,...,k
	\end{align}
	
	Now we show that $\psi$ is an isometry. Note first that for $p = (p_0,...,p_k) \in N_0 \times \cdots \times N_k$ and $v = (v_0,...,v_k) \in T_p (N_0 \times \cdots \times N_k)$, \cref{eq:nonNullImDecomp} implies that
	
	\begin{equation}  \label{eq:nonNullWPPush}
		\psi_*v = P_0v_0 + \sum\limits_{i=1}^k \bp{a_i, v_0}(p_i-c_i) + \sum\limits_{i=1}^k \bp{a_i, p_0-c}v_i
	\end{equation}
	
	Hence also using the fact that:
	
	\begin{equation}
		\bp{p_i-c_i, v_i} = 0 \text{ for } i = 1,...,k
	\end{equation}
	
	\noindent we get:
	
	\begin{align}
		(\psi_*v)^2 & = (P_0v_0)^2 + \sum\limits_{i=1}^k  (\bp{a_i, v_0}(p_i-c_i))^2 + \sum\limits_{i=1}^k (\bp{a_i, p_0-c}v_i)^2 \\
		& = (P_0v_0)^2 + \sum\limits_{i=1}^k \frac{\bp{a_i, v_0}^2}{\kappa_i} + \sum\limits_{i=1}^k \rho_i(p_0)^2 v_i^2 \\
		& = (P_0v_0 + \sum\limits_{i=1}^k \frac{\bp{a_i, v_0}}{\kappa_i} a_i)^2  + \sum\limits_{i=1}^k \rho_i(p_0)^2 v_i^2 \\
		& = v_0^2 + \sum\limits_{i=1}^k \rho_i(p_0)^2v_i^2
	\end{align}
	
	\noindent where the last two lines follow from the fact that $v_0 \in V_0$ and \cref{eq:nonNullVnotDecomp}.
	
	\textbf{The null case:} We have the following decomposition of $V_0$:
	
	\begin{equation} \label{eq:nullVnotDecomp}
		V_0 = W_0 \obot \spa{a,b}
	\end{equation}
	
	\noindent where $W_0$ is the orthogonal complement of $\spa{a,b}$ relative to $V_0$. Let $P_i$ denote the orthogonal projection onto $W_0$ for $i = 0$ and onto $V_1$ for $i = 1$. Then for $p \in \eunn$:
	
	\begin{equation} \label{eq:nullDomDecomp}
		p = P_0p + \bp{b,p} a + \bp{a,p} b + P_1p
	\end{equation}
	
	\noindent and
	
	\begin{equation} \label{eq:nullDomDecompLen}
		p^2 = (P_0p)^2 + 2\bp{b,p}\bp{a,p} + (P_1p)^2
	\end{equation}
	
	Let $c = \overline{p} - b$, $\tilde{p}_0 = p_0 - c$ and $\tilde{p}_1 = p_1 - \overline{p}$, then for $(p_0,p_1) \in N_0 \times N_1$
	
	\begin{align}
		\psi(p_0,p_1) & = c + P_0(\tilde{p}_0) + (\bp{b, \tilde{p}_0} - \frac{1}{2}\bp{a, \tilde{p}_0}(P_1(\tilde{p}_1))^2)a + \bp{a, \tilde{p}_0}b \nonumber \\
		& \qquad + \bp{a, \tilde{p}_0} P_1(\tilde{p}_1) \label{eq:nullImDecomp}
	\end{align}
	
%
	
	\noindent where the last two lines follow from \cref{eq:nullDomDecompLen}.
	
	Injectivity of $\psi$ follows readily from  \cref{eq:nullImDecomp}.
	
	Now for surjectivity: From \cref{eq:nullImDecomp} it's clear that $\psi(N_0 \times N_1) \subseteq \Ima(\psi)$. Given $p \in \Ima(\psi)$, let $\tilde{p} = p -c$, then using \cref{eq:nonNullImDecomp} in conjunction with \cref{eq:nonNullVnotDecomp} we can readily calculate the inverse $q = \psi^{-1}(p)$ given in components as follows:
	
	\begin{align}
		q_0 & = c + P_0(\tilde{p}) + (\bp{b, \tilde{p}} +\frac{1}{2\bp{a, \tilde{p}}}(P_1(\tilde{p}))^2)a + \bp{a, \tilde{p}}b \label{eq:nullDecompInv} \\
		q_1 & = \overline{p} + \frac{1}{\bp{a, \tilde{p}}}P_1(\tilde{p}) - \frac{1}{2\bp{a, \tilde{p}}^2}(P_1(\tilde{p}))^2 a
	\end{align}
	
	Now we show that $\psi$ is an isometry. Note first that for $p = (p_0,p_1) \in N_0 \times N_1$ and $v = (v_0,v_1) \in T_p (N_0 \times N_1)$, \cref{eq:nullImDecomp} implies that
	
	\begin{align}
		\psi_*v & = P_0v_0 + (\bp{b, v_0} - \frac{1}{2}\bp{a, v_0}(P_1(\tilde{p}_1))^2 - \bp{a, \tilde{p}_0}\bp{P_1\tilde{p}_1, P_1v_1})a + \bp{a,v_0}b \nonumber \\
		& \qquad + \bp{a, v_0} P_1(\tilde{p}_1) + \bp{a, \tilde{p}_0} P_1(v_1) \label{eq:nullWPPush}
	\end{align}
	
	Hence we get that:
	
	\begin{align}
		(\psi_*v)^2 & = (P_0v_0)^2 + 2 \bp{b, v_0} \bp{a, v_0} + \bp{a, \tilde{p}_0}^2  (P_1v_1)^2 \\
		& = (\bp{b, v_0}a + \bp{a, v_0}b + P_0v_0)^2 + \rho(p_0)^2  (P_1v_1)^2 \\
		& = v_0^2 + \rho(p_0)^2v_1^2
	\end{align}
	
	\noindent where the last two lines follow from the fact that $v_0 \in V_0$, \cref{eq:nullDomDecompLen} and since $P_1 : T_{p_1} N_1 \rightarrow V_1$ is an isometry for each $p_1 \in N_1$.
\end{proof}

\begin{definition}
	We call $\psi$ the warped product decomposition of $\eunn$ determined by $(\overline{p}; N_1,...,N_k)$ or by $(\overline{p}; \bigobot\limits_{i=0}^k V_i; a_1,...,a_k)$ as in the hypothesis of the above theorem.
\end{definition}

Note that in the context of the above definition, the warped product decomposition is proper if each $a_i \neq 0$. For actual calculations we wish to work with canonical forms. The following definition will be particularly convenient.

\begin{definition}[Canonical form for Warped products of $\eunn$]
	We say that a proper warped product decomposition of $\eunn$ determined by $(\bar{p}; \bigobot\limits_{i=0}^k V_i; a_1,...,a_k)$ is in canonical form if: $\bar{p} \in V_0$ and $\bp{\bar{p}, a_i} = 1$.
\end{definition}

We note here that any proper warped product decomposition $\psi$ of $\eunn$ can be brought into canonical form by the translation $\psi \rightarrow \psi - c$. This follows from the above theorem by observing that $\bp{\bar{p} - c, a_i} = 1$ for each $i > 0$. The following corollary gives the standard warped product decompositions of $\eunn$ in canonical form.

\begin{corollary}[Canonical form for Warped products of $\eunn$] \label{cor:WPconForm}
	Let $\psi$ be a proper warped product decomposition of $\eunn$ determined by $(\bar{p}; \bigobot\limits_{i=0}^k V_i; a_1,...,a_k)$ which is in canonical form.

	Then the conclusions of \cref{thm:WPDecomps} simplify as follows:
	
	\begin{equation}
		N_0 = \{p \in V_0 | \bp{a_i, p} > 0 \text{ for all i }  \}
	\end{equation}
	
	\begin{equation}
		\rho_i = \bp{a_i, p_0}
	\end{equation}
	
	\begin{equation}
		\Ima(\psi) = \begin{cases}
			\{ p \in \eunn \: | \: \sgn (P_i(p))^2 = \epsilon_i, \text{ for each } i = 1,...,k \} & \text{non-null case} \\
			\{ p \in \eunn \: | \: \bp{a,p} > 0 \} & \text{null case}
		\end{cases}
	\end{equation}

	For $(p_0,...,p_k) \in N_0 \times \cdots \times N_k$, $\psi$ has the following form:
	
	\begin{equation}
		\psi(p_0,...,p_k) = \begin{cases}
					P_0p_0 + \sum\limits_{i=1}^k \bp{a_i, p_0} (p_i-c_i) & \text{non-null case} \\
					P_0p_0 + (\bp{b, p_0} - \frac{1}{2}\bp{a, p_0}(P_1(p_1))^2)a + \bp{a,p_0}b + \bp{a, p_0} P_1p_1 & \text{null case}
				\end{cases}
	\end{equation}

	Furthermore, the following equation holds:
	
	\begin{equation} \label{eq:wpdecpSp}
		\psi(p_0,...,p_k)^2 = p_0^2
	\end{equation}
\end{corollary}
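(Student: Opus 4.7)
The strategy is a straightforward substitution: apply the canonical-form hypotheses $\bar{p} \in V_0$ and $\bp{\bar{p}, a_i} = 1$ to the formulas already established in \cref{thm:WPDecomps} and check that each of $N_0$, $\rho_i$, $\Ima(\psi)$, and the expression for $\psi(p_0,\dotsc,p_k)$ simplifies as claimed. The identity \eqref{eq:wpdecpSp} will then drop out of a short orthogonal expansion.

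For the non-null case, first observe that $c = \bar{p} - \sum_{i=1}^k a_i/\kappa_i$ lies in $V_0$, so the affine shift by $c$ leaves $V_0$ invariant. Using $\bp{a_i, \bar{p}} = 1$ together with pairwise orthogonality $\bp{a_i, a_j} = \delta_{ij} \kappa_i$, I would compute $\bp{a_i, c} = 0$, whence $\bp{a_i, p - c} = \bp{a_i, p}$ for all $p \in V_0$. This immediately yields the new description of $N_0$ and the formula $\rho_i(p_0) = \bp{a_i, p_0}$. To simplify $\Ima(\psi)$ I would show $P_i c = 0$ by computing $P_i \bar{p} = a_i/\kappa_i$ from the decomposition $V_0 = W_0 \obot \bigobot_j \R a_j$ and the conditions $\bp{a_j, \bar{p}} = 1$. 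Finally, writing $p_0 = P_0 p_0 + \sum_i \tfrac{\bp{a_i, p_0}}{\kappa_i} a_i$ by \eqref{eq:nonNullVnotDecomp} and absorbing the $a_i/\kappa_i$ pieces into $(p_i - c_i) = (p_i - \bar{p}) + a_i/\kappa_i$, the general formula $\psi = p_0 + \sum_i \rho_i(p_0)(p_i - \bar{p})$ of \eqref{eq:psiGenFormEunn} rearranges into the claimed simplified form.

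For the null case the strategy is analogous. With $c = \bar{p} - b$, the conditions $\bp{a, \bar{p}} = 1 = \bp{a, b}$ give $\bar{p} - b \in a^\perp \cap V_0$, and the decomposition $V_0 = W_0 \obot \spa{a, b}$ forces $\bar{p} = P_0 \bar{p} + \bp{b, \bar{p}} a + b$. Substituting $\tilde{p}_0 = p_0 - c$ into \eqref{eq:nullImDecomp} and using $\bp{a, \tilde{p}_0} = \bp{a, p_0}$, $\bp{b, \tilde{p}_0} = \bp{b, p_0} - \bp{b, \bar{p}}$, and $P_1 \bar{p} = 0$ (the latter because $\bar{p} \in V_0 \perp V_1$), the $P_0 \bar{p}$ and $\bp{b, \bar{p}}$ contributions cancel against the $c = \bar{p} - b$ prefix, yielding exactly the stated expression.

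The identity $\psi(p_0,\dotsc,p_k)^2 = p_0^2$ is then a short orthogonal expansion. In the non-null case, $P_0 p_0 \in W_0$ and each $(p_i - c_i) \in W_i$, so pairwise orthogonality of the $W_i$ and $(p_i - c_i)^2 = 1/\kappa_i$ give $\psi^2 = (P_0 p_0)^2 + \sum_i \bp{a_i, p_0}^2 / \kappa_i$; comparing with $p_0^2$ computed via \eqref{eq:nonNullVnotDecomp} gives equality. In the null case one expands using $\bp{a, b} = 1$ and the nullity of $a, b$; the term $-\bp{a, p_0}^2 (P_1 p_1)^2$ arising from $2 \bp{a, b}$ times the $a$-coefficient exactly cancels the contribution $\bp{a, p_0}^2 (P_1 p_1)^2$ from $(\bp{a, p_0} P_1 p_1)^2$, leaving $\psi^2 = (P_0 p_0)^2 + 2 \bp{a, p_0}\bp{b, p_0} = p_0^2$. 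The only mildly subtle step is verifying $P_i c = 0$ and $P_1 \bar{p} = 0$ to align the shifted formulas with the canonical ones; everything else is bookkeeping.
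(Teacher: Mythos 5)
Your proposal is correct and follows essentially the same route as the paper: compute $\bp{a_i,c}=0$ (resp. $\bp{a,c}=0$) from the canonical-form conditions, use this to strip the translation by $c$ from $N_0$, $\rho_i$, $\Ima(\psi)$ and the formula for $\psi$, and then obtain $\psi^2 = p_0^2$ by an orthogonal expansion against the decomposition $\eunn = W_0 \obot \bigobot_i W_i$. The only cosmetic difference is that you rearrange \eqref{eq:psiGenFormEunn} directly and explicitly verify $P_i c = 0$, whereas the paper simplifies the already-shifted form \eqref{eq:nonNullImDecomp} via $P_0 c = c$ and leaves the $\Ima(\psi)$ computation as ``similar''.
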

\begin{proof}

First note that for the non-null case:

\begin{align}
	\bp{a_i,c} & = \bp{a_i,\bar{p} - \frac{a_i}{\kappa_i}} \\
	& = 1 - \frac{\bp{a_i, a_i}}{\kappa_i} \\
	& = 0
\end{align}

Similarly for the null-case:

\begin{align}
	\bp{c, a} & = \bp{\bar{p} - b, a} \\
	& = 0
\end{align}

Thus we see that

\begin{align}
	N_0 & = c + \{p \in  V_0 | \bp{a_i, p} > 0 \text{ for all i }  \} \\
	& = \{p \in  V_0 | \bp{a_i, p} > 0 \text{ for all i }  \}
\end{align}

The formula for $\Ima(\psi)$ follows similarly. Clearly $\rho_i(p_0) =  \bp{a_i, p_0 - c} = \bp{a_i, p_0}$. Now we break into cases.

\textbf{The non-null case:}

Note that $c \in W_0$, so $P_0c = c$, hence

\begin{align}
	\psi(p_0,...,p_k) & = c + P_0(p_0 - c) + \sum\limits_{i=1}^k \bp{a_i, p_0-c} (p_i-c_i) \\
	& = c + P_0(p_0 - c) + \sum\limits_{i=1}^k \bp{a_i, p_0} (p_i-c_i) -\sum\limits_{i=1}^k \bp{a_i,c} (p_i-c_i) \\
	& = P_0p_0 + \sum\limits_{i=1}^k \bp{a_i, p_0} (p_i-c_i)
\end{align}

It follows from the above equation that $\psi(p_0,...,p_k)^2 = p_0^2$.

\textbf{The null case:}
	
By \cref{eq:nullDomDecomp}, $c$ can be written as follows:

\begin{equation}
	c = P_0c + \bp{b,c} a
\end{equation}

Thus \cref{eq:nullImDecomp} reduces to

\begin{align} 
	\psi(p_0,p_1) & = c + P_0(p_0) - P_0c - \bp{b,c}a + (\bp{b, p_0} - \frac{1}{2}\bp{a, p_0}(P_1(\tilde{p}_1))^2)a + \bp{a,p_0}b \nonumber \\
	& \qquad + \bp{a, p_0} P_1(\tilde{p}_1) \\
	& = P_0(p_0) + (\bp{b, p_0} - \frac{1}{2}\bp{a, p_0}(P_1(p_1))^2)a + \bp{a,p_0}b \nonumber \\
			& \qquad + \bp{a, p_0} P_1(p_1) \\
\end{align}

\noindent In the last equation we used the fact that $P_1 \tilde{p}_1 = P_1 p_1$ since $\bar{p} \in V_0$.

Finally, it follows from the above equation that $\psi(p_0,p_1)^2 = p_0^2$.
\end{proof}

\section{Isometry groups of Spherical submanifolds of pseudo-Euclidean space*} \label{sec:IsoSphEunn}

Warped products of spaces of constant curvature are closely related to certain integrable subgroups of the isometry group due to the following fact \cite{Zeghib2011}:

\begin{proposition}[Lifting isometries from Killing distributions] \label{prop:liftIsomKil}
	Let $M = B \times_{\rho} F$ be a warped product and suppose $\tilde{f} : F \rightarrow F$ is an isometry of F. Then the lift $f$ defined by
	
	\begin{equation}
		f(x,y) := (x, \tilde{f}(y)), \quad (x,y) \in B \times F
	\end{equation}
	
	\noindent is an isometry of M.
\end{proposition}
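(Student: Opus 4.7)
The plan is to verify directly that the map $f$ preserves the warped product metric, using the definition of that metric together with the elementary observation that $f$ acts as the identity on the $B$-factor and as $\tilde f$ on the $F$-factor.

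First I would record the two key commutation identities, namely $\pi_B \circ f = \pi_B$ and $\pi_F \circ f = \tilde f \circ \pi_F$, both immediate from the formula $f(x,y) = (x,\tilde f(y))$. Since $\rho$ is a function on $B$ and is pulled back to $M$ via $\pi_B$, the first identity also yields $\rho \circ f = \rho$. Next, I would note that $f$ is a diffeomorphism: its inverse is $(x,y) \mapsto (x,\tilde f^{-1}(y))$, which is smooth because $\tilde f$ is an isometry and hence a diffeomorphism of $F$.

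The main computation is then to show $f^* g = g$. Writing the warped product metric as
\begin{equation*}
	g = \pi_B^* g_B + \rho^2 \, \pi_F^* g_F,
\end{equation*}
the pullback splits into two terms. For the first, the functoriality of pullback together with $\pi_B \circ f = \pi_B$ gives $f^*(\pi_B^* g_B) = (\pi_B \circ f)^* g_B = \pi_B^* g_B$. For the second, using $\pi_F \circ f = \tilde f \circ \pi_F$, the fact that $\tilde f$ is an isometry of $F$ (so $\tilde f^* g_F = g_F$), and $\rho \circ f = \rho$, we obtain
\begin{equation*}
	f^*\bigl(\rho^2 \, \pi_F^* g_F\bigr) = (\rho \circ f)^2 \, (\pi_F \circ f)^* g_F = \rho^2 \, \pi_F^*(\tilde f^* g_F) = \rho^2 \, \pi_F^* g_F.
\end{equation*}
Adding these yields $f^* g = g$, which together with the diffeomorphism property shows that $f$ is an isometry of $M$.

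There is essentially no obstacle: the proof is a bookkeeping exercise that only uses the definition of the warped metric, the fact that $\rho$ depends solely on $B$, and that $\tilde f$ is an isometry of $F$. The one conceptual point worth highlighting is that the argument would \emph{fail} for lifts of isometries of the base $B$, because those would have to commute with $\rho$ in a nontrivial way; it is precisely the $B$-independence of the $F$-factor action (and the $F$-independence of $\rho$) that makes the lift work effortlessly in the stated direction.
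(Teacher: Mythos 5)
Your proof is correct and complete: the identities $\pi_B \circ f = \pi_B$ and $\pi_F \circ f = \tilde f \circ \pi_F$, together with $\rho \circ f = \rho$ and $\tilde f^* g_F = g_F$, give $f^*g = g$ by exactly the pullback computation you describe, and the diffeomorphism property is handled. The paper itself supplies no proof of this proposition (it simply cites \cite{Zeghib2011}), so there is nothing to compare against; your direct verification is the standard argument and would serve as the proof the paper omits. Your closing remark about why the analogous lift of a base isometry fails is also apt, since such a lift preserves the metric only when it additionally preserves $\rho$.
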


Theorem~5.1 in \cite{Zeghib2011} shows conversely that given a certain integrable group action on a pseudo-Riemannian manifold $M$, one can obtain a warped product whose spherical foliation is invariant under the action of the group. Hence in spaces of constant curvature one can show that the above property characterizes warped products. In view of this, in this section we state the isometry groups which preserve the spherical submanifolds of pseudo-Euclidean space.

The isometry groups of $H_\nu^n$ and $S_\nu^n$ are well documented, see for example \cite[section~9.2]{barrett1983semi}. In this section we will describe the isometry group of $\punn$. This is given in \cite[lemma~6]{Nolker1996} for the case when $\nu = 0$; that proof should generalize easily. Although, we will give a different proof (motivated by Nolker's results) using our knowledge of warped product decompositions and \cref{prop:liftIsomKil}.

We denote the homogeneous isometry group (i.e. orthogonal group) of $\E^{n+2}_{\nu+1}$ by $O_{\nu + 1}(n+2)$ (see \cite{barrett1983semi}). Then we have the following:

\begin{proposition}
	Let $-a$ be the mean curvature vector of $\punn$. The isometry group of $\punn$ is:
	
	\begin{equation} 
		I(\punn) = \{T \in O_{\nu + 1} (n+2) \; | \; T a = a \}
	\end{equation}
	
	Furthermore suppose we fix an embedding of $\eunn$ by fixing a subspace $V \simeq \eunn$, then for $p \in V$ and $\tilde{p} \in V^\perp$ we have the following Lie group isomorphism:
	
	\begin{equation}
		\phi : \begin{cases}
		O(V) \ltimes V & \rightarrow I(\punn) \\
		(B, v) & \mapsto  \phi(B,v)
		\end{cases}
	\end{equation}
	
	where
	
	\begin{equation}
		\phi(B,v)(p + \tilde{p}) = \tilde{p} + B p + \bp{a,\tilde{p}} v - (\bp{B p, v} + \frac{1}{2}\bp{a, \tilde{p}}v^2))a
	\end{equation}
\end{proposition}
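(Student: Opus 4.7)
The approach I would take is to reduce everything to the global isometry $\psi : V \to \punn$ of \cref{prop:eunnPunn}, which identifies $I(\punn)$ with $I(\eunn) \cong O(V) \ltimes V$. Simultaneously I would show that the restriction map $T \mapsto T|_{\punn}$ is a bijection from the stabilizer $\{T \in O_{\nu+1}(n+2) : Ta = a\}$ onto $I(\punn)$. Composing these two identifications, the formula for $\phi(B,v)$ is forced: it must be the unique linear extension to $\E^{n+2}_{\nu+1}$, fixing $a$, of $\psi \circ (x \mapsto Bx+v) \circ \psi^{-1}$.

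The forward direction of the restriction bijection is easy: if $Ta = a$, then $T$ preserves both the null cone $\E^{n+2}_{\nu+1}(\infty)$ (as a linear isometry) and the affine hyperplane $\{p : \bp{p,a} = 1\}$ (because $\bp{Tp, a} = \bp{Tp, Ta} = \bp{p, a}$), hence $T(\punn) = \punn$. Injectivity of restriction follows by fixing a basis $\{a, b, e_1, \ldots, e_n\}$ with the $e_i$ spanning $V$: the value $T(a) = a$ is given, $T(b) = T(\psi(0))$ is determined by $T|_{\punn}$, and $T(e_i) = (T \circ \psi)_{*}(0)(e_i)$ is determined by $T|_{\punn}$ since $\psi_*(0)$ is the identity on $V$.

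For surjectivity and the explicit formula, I would write down $\phi(B,v)$ as given and verify three things by direct calculation. First, $\phi(B,v)$ lies in $O_{\nu+1}(n+2)$ and fixes $a$: check pairwise inner products of the images of $a$, $b$, and the basis $\{e_i\}$, using $\bp{a,b}=1$, $a^2=b^2=0$, $V \perp \spa{a,b}$, and $B \in O(V)$. Second, $\phi(B,v)(\psi(x)) = \psi(Bx+v)$ for all $x \in V$, which is a direct expansion using $\bp{a,\psi(x)} = 1$ together with $B \in O(V)$. Third, every $F \in I(\punn)$ conjugates through $\psi$ to an isometry of $\eunn \cong V$, which is necessarily of the form $x \mapsto Bx + v$, so $F = \phi(B,v)|_{\punn}$.

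Finally, $\phi$ is a group homomorphism: decomposing $\phi(B,v)(p + \tilde{p})$ into its $V$ part $Bp + \bp{a,\tilde{p}}v$ and its $V^\perp$ part $\tilde{p} - (\bp{Bp,v} + \frac{1}{2}\bp{a,\tilde{p}}v^2)a$, and observing that $\bp{a, \cdot}$ on the $V^\perp$ component is preserved (since $a^2 = 0$), the composition $\phi(B_1,v_1) \circ \phi(B_2,v_2)$ collapses after bookkeeping to $\phi(B_1 B_2, v_1 + B_1 v_2)$, the standard semidirect product law on $O(V) \ltimes V$. Injectivity of $\phi$ follows from $\phi(B,v)(b) = b + v - \frac{1}{2}v^2 a$ (forcing $v=0$) and then $\phi(B,0)(e_i) = B e_i$ (forcing $B$ to be the identity). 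The main obstacle is purely computational: carefully tracking the lightlike components $a$ and $b$ through the formula; conceptually the result is forced by the isomorphism $\punn \cong \eunn$ from \cref{prop:eunnPunn} together with the known structure of the isometry group of pseudo-Euclidean space.
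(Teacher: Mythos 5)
Your proposal is correct, but it takes a noticeably different route through the key step than the paper does. The paper's proof is deliberately built on the warped-product machinery developed earlier: it realizes $\punn$ as the spherical leaf of the null warped product decomposition $\psi(p_0,p) = \bp{a,p_0}b + \bp{a,p_0}p + (\bp{b,p_0} - \frac{1}{2}\bp{a,p_0}p^2)a$ of $\E^{n+2}_{\nu+1}$, defines $\hat{T}(x) := \psi(x_0, Tx_1)$ as the lift of the fiber isometry $T(p) = Bp+v$, and invokes \cref{prop:liftIsomKil} to conclude that $\hat{T}$ is an isometry of an open subset of the ambient space; only then does it compute $\hat{T}$ explicitly, observe that it is linear and fixes $a$, and conclude $\hat{T} \in O_{\nu+1}(n+2)$. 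You instead bypass \cref{prop:liftIsomKil} entirely: you write down $\phi(B,v)$, verify by hand on the basis $\{a,b,e_1,\dotsc,e_n\}$ that it is a linear isometry fixing $a$, and check the intertwining identity $\phi(B,v)\circ\psi = \psi\circ(x\mapsto Bx+v)$ directly. Your surjectivity argument also differs in structure: the paper starts from an arbitrary $T$ in the stabilizer, exploits $(Tp)^2 = p^2$ against the decomposition $p = \bp{a,p}b + \bp{b,p}a + Pp$ to extract $(B,v) = (PT, PTb)$, whereas you route through the intrinsic isometry group of $\punn \cong \eunn$ (via \cref{prop:eunnPunn}) together with injectivity of the restriction map from the stabilizer, which you establish cleanly with the basis argument. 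The trade-off: the paper's proof illustrates the lifting proposition it has just stated and needs no appeal to the classification of isometries of $\eunn$ for surjectivity onto the stabilizer, while yours is more elementary and self-contained as a computation, at the cost of invoking the standard fact that every isometry of pseudo-Euclidean space is affine. Both are complete; your homomorphism and injectivity checks for $\phi$ agree in substance with the paper's (the paper gets the homomorphism property for free from $\widehat{TS} = \hat{T}\hat{S}$, which is the same intertwining identity you verify).
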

\begin{proof}
	Consider the warped product decomposition:
	
	\begin{equation}
	\psi(p_0, p) = \bp{a,p_0}b + \bp{a, p_0} p + (\bp{b, p_0} - \frac{1}{2}\bp{a, p_0}p^2)a
	\end{equation}
	
	 for $p_0 \in N_0$ and $p \in V$. Note that
	
	\begin{equation}
	\psi(b, p) = b + p  - \frac{1}{2}p^2 a 
	\end{equation}
	
	is a map onto $\punn$. As in \cref{eq:nullDecompInv}, one can deduce that the inverse of $\psi$ is
	
	\begin{align}
		q_0 & = (\bp{b, p} +\frac{1}{2\bp{a, p}}(P p)^2)a + \bp{a, p}b \\
		q_1 & = \frac{1}{\bp{a, p}}P p
	\end{align}
	
	Let $B \in O(V)$, $v \in V$ and define $Tp = Bp + v$ for $p \in V$. Now define $\hat{T}$ by:
	
	\begin{equation}
		\hat{T} : \begin{cases}
		\E^{n+2}_{\nu+1} & \rightarrow \E^{n+2}_{\nu+1} \\
		p & \mapsto  \psi (p_0 , T p_1)
		\end{cases}
	\end{equation}
	
	Since $\psi$ is a warped product decomposition, it follows by \cref{prop:liftIsomKil} that $\hat{T}$ induces an isometry of some open subset of $\E^{n+2}_{\nu+1}$ onto itself. We will now calculate $\hat{T}$ explicitly.
	
	For arbitrary $x \in \E^{n+2}_{\nu + 1}$ write $x = p + \tilde{p}$ where $p \in V$ and $\tilde{p} \in V^\perp$.
	
	\begin{align}
		(T q_1)^2 & = \norm{\frac{1}{\bp{a,x}} B p + v} \\
		& = (\frac{1}{\bp{a, x}^2} (P x)^2 + \frac{2}{\bp{a, x}} \bp{B p, v} + v^2)
	\end{align}
	
	\begin{align}
	\psi(q_0, T q_1) & = \bp{a,q_0}b + \bp{a, q_0} T q_1 + (\bp{b, q_0} - \frac{1}{2}\bp{a, q_0}(T q_1)^2)a \\
	& = \bp{a,x}b + \bp{a,x} T q_1 + (\bp{b, x} +\frac{1}{2\bp{a, x}}(P x)^2 - \frac{1}{2}\bp{a, x}(T q_1)^2)a \\
	& = \bp{a,x}b + \bp{a,x} T q_1 + (\bp{b, x} - (\bp{B p, v} + \frac{1}{2}\bp{a, x}v^2))a \\
	& = \bp{b, x} a + \bp{a,x}b + B p + \bp{a,x} v - (\bp{B p, v} + \frac{1}{2}\bp{a, x}v^2))a \\
	& = \tilde{p} + B p + \bp{a,x} v - (\bp{B p, v} + \frac{1}{2}\bp{a, x}v^2))a
	\end{align}
	
	Hence if $p := P x$ and $\tilde{p} := (I-P)x$ then
	
	\begin{equation}
		\hat{T}x = \tilde{p} + B p + \bp{a,x} v - (\bp{B p, v} + \frac{1}{2}\bp{a, x}v^2))a
	\end{equation}
	
	Thus since $\hat{T}$ is a linear isometry of $\E^{n+2}_{\nu+1}$ it follows that $\hat{T} \in O(\E^{n+2}_{\nu+1})$. Also $\hat{T}$ clearly fixes $a$ so $\hat{T} \in I(\punn)$.
	
	Let the map $\phi$ be as in the hypothesis. Note that $\phi(B,v) = \hat{T}$. $\phi$ is a Lie group homomorphism, since
	
	\begin{align}
		\hat{(T S)}x & = \psi (x_0 , T S x_1) \\
		& = \psi ((\hat{S} x)_0 , T (( \hat{S} x)_1) \\
		& = \hat{T} \hat{S} x
	\end{align}
	
	By definition of $\hat{T}$ it follows that $\phi$ is injective.
		
	
	To show that $\phi$ is surjective, fix $T \in I(\punn)$. Consider the decomposition:
	
	\begin{equation}
		p = \bp{a,p} b + \bp{b,p} a + P p, \quad p \in \eunn
	\end{equation}
	
	Using the fact that $(T p)^2 = p^2$ with the above decomposition we obtain the following equations:
	
	\begin{align}
		p^2 & = (T p)^2 = 2 \bp{a, Tp} \bp{b, Tp} + (P T p)^2  \\
		p \in V \Rightarrow p^2 & = (P T p)^2 \\
		p = b \Rightarrow  0 = b^2 & = 2 \bp{b, T b} + (P T b)^2 \\
		p = \psi(b, \tilde{p}) \Rightarrow  0 = p^2 & = 2 \bp{b, T \tilde{p}} + \bp{P T b, P T \tilde{p}}
	\end{align}
	
	The second equation implies that $P T \in O(V)$. We claim that $\phi(PT, PTb) = T$. This can be seen by decomposing the action of $T$ with respect to the above decomposition and then using the last three equations and the fact that $T \in I(\punn)$.
	
	Hence $\phi$ is a Lie group isomorphism.
\end{proof}

We also note that if $\psi : \eunn \rightarrow \punn$ is the standard embedding from \cref{eq:psiPunn}, then $\psi$ is equivariant, i.e. in the notation of the proof $\psi \circ T (p) = \hat{T} \circ \psi (p)$.

\section{Warped Product decompositions of Spherical submanifolds of Pseudo-Euclidean space} \label{sec:WPdecompSphEunn}

\subsection{Spherical submanifolds of \texorpdfstring{$\eunn(\kappa)$}{hyperquadrics of pseudo-Euclidean space}}

In this section we will classify the spherical submanifolds of $\eunn(\kappa)$. In particular we will show that they all have the form $\eunn(\kappa) \cap (\bar{p} + W)$ for some $\bar{p} \in \eunn$ and some subspace $W$. Although not all spherical submanifolds will have this form since we are only considering the case of pseudo-Riemannian manifolds. We will see that all spherical submanifolds of $\eunn(\kappa)$ arise as restrictions of spherical submanifolds of $\eunn$.

The following lemma concerns a submanifold $N$ of $\eunn(\kappa)$. We denote by $H'$ the mean curvature normal of $N$ in $\eunn(\kappa)$ and $H$ the mean curvature normal of $N$ in $\eunn$. Similar definitions hold for the second fundamental forms $h'$ and $h$. As usual $r$ denotes the dilatational vector field.

\begin{lemma} \label{lem:sphEunnkap}
	If $N$ is a submanifold of $\eunn(\kappa)$ then the following equations hold:
	
	\begin{equation}
		h(X,Y) = h'(X,Y) - \bp{X,Y} \frac{r}{r^2}
	\end{equation}
	
	\begin{equation} \label{eq:meanCurvRel}
		H = H' - \frac{r}{r^2}
	\end{equation}
	
	In particular, $N$ is an umbilical submanifold of $\eunn(\kappa)$ iff it is an umbilical submanifold of $\eunn$. In fact, $N$ is a spherical submanifold of $\eunn(\kappa)$ iff it is a spherical submanifold of $\eunn$.
\end{lemma}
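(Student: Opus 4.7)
The plan is to derive the second fundamental form relation by applying the Gauss equation twice along the chain of inclusions $N \subset \eunn(\kappa) \subset \eunn$, then to obtain everything else as a consequence.

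First I would use \cref{prop:eunnStdHQuad}, which tells us that $\eunn(\kappa)$ is an umbilical submanifold of $\eunn$ whose second fundamental form is
\begin{equation*}
	h_{\eunn(\kappa)}(X,Y) = -\bp{X,Y}\frac{r}{r^2}
\end{equation*}
for $X,Y$ tangent to $\eunn(\kappa)$. For $X,Y \in \ve(N)$, the Gauss equation for $N \subset \eunn(\kappa)$ gives $\nabla^{\eunn(\kappa)}_X Y = \nabla^N_X Y + h'(X,Y)$, while the Gauss equation for $\eunn(\kappa) \subset \eunn$ gives $\bar\nabla_X Y = \nabla^{\eunn(\kappa)}_X Y + h_{\eunn(\kappa)}(X,Y)$. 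Chaining these and comparing with $\bar\nabla_X Y = \nabla^N_X Y + h(X,Y)$, then projecting onto the normal bundle of $N$ in $\eunn$, yields the first displayed equation. Tracing over an orthonormal frame of $TN$ and dividing by $\dim N$ immediately gives \cref{eq:meanCurvRel}.

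The umbilical equivalence is then a direct substitution: $h(X,Y) = \bp{X,Y} H$ reduces algebraically to $h'(X,Y) = \bp{X,Y} H'$ using the two displayed formulas, since the terms proportional to $\bp{X,Y} r/r^2$ cancel on both sides. For the spherical equivalence, I would additionally need to compare the normal connections. The key observation is that the normal bundle of $N$ in $\eunn$ decomposes orthogonally as $\ve(N)^{\perp,\eunn(\kappa)} \oplus \R r$, and that for $X \in \ve(N)$,
\begin{equation*}
	\bar\nabla_X\left(\frac{r}{r^2}\right) = \frac{X}{r^2}
\end{equation*}
since $r^2$ is constant on $\eunn(\kappa)$ and $\bar\nabla_X r = X$. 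This derivative is tangent to $N$, hence has vanishing component in the normal bundle of $N$ in $\eunn$. Combined with \cref{prop:sEsEperp} applied to the inclusion $\eunn(\kappa) \subset \eunn$ (which shows that the normal component of $\bar\nabla_X H'$ in $\eunn$ coincides with $\nabla^{\perp,\eunn(\kappa)}_X H'$, because $\bp{X,H'}=0$ for $X$ tangent and $H'$ normal within $\eunn(\kappa)$), this will give $\nabla^{\perp,\eunn}_X H = \nabla^{\perp,\eunn(\kappa)}_X H'$, so one vanishes iff the other does.

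The main obstacle is the spherical equivalence: one must carefully track which components live in which piece of the orthogonal splitting of the normal bundle. I expect that once the fact that $\bar\nabla_X(r/r^2)$ is tangent to $N$ is exploited, the bookkeeping reduces to checking that the $\R r$-component of $\nabla^{\perp,\eunn}_X H$ vanishes trivially (it does, since $H'$ is orthogonal to $r$ throughout $\eunn(\kappa)$ and $\bar\nabla_X$ preserves this orthogonality up to tangent terms). Umbilicality combined with this identification of normal connections gives the spherical equivalence.
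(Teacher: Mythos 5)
Your argument is correct, but it is genuinely different from what the paper does: the paper's ``proof'' of \cref{lem:sphEunnkap} is a one-line citation to Lemma~3.5 and Corollary~3.1 of \cite{chen2011pseudo}, whereas you supply a self-contained derivation. Your route --- chaining the Gauss equations along $N \subset \eunn(\kappa) \subset \eunn$, substituting the umbilical second fundamental form $h_{\eunn(\kappa)}(X,Y) = -\bp{X,Y}\,r/r^2$ from \cref{prop:eunnStdHQuad}, tracing to get \cref{eq:meanCurvRel}, and observing that the two traceless parts coincide so that umbilicity transfers --- is exactly the standard computation that the cited results encode, and it has the advantage of keeping the article self-contained and of making visible \emph{why} the correction term is tangent to the radial direction only. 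The spherical equivalence is also handled correctly: $\bar\nabla_X(r/r^2) = X/r^2$ is tangent to $N$ and so drops out of the normal connection, and $\bar\nabla_X H'$ stays tangent to $\eunn(\kappa)$, giving $\nabla^{\perp,\eunn}_X H = \nabla^{\perp,\eunn(\kappa)}_X H'$. One small imprecision: the step where you identify the normal component of $\bar\nabla_X H'$ does not really need \cref{prop:sEsEperp}; what you are actually using is the umbilicity of $\eunn(\kappa)$ again, namely $h_{\eunn(\kappa)}(X,H') = \bp{X,H'}\,H_{\eunn(\kappa)} = 0$ since $X$ is tangent and $H'$ normal to $N$ inside $T\eunn(\kappa)$, so that $\bar\nabla_X H' = \nabla^{\eunn(\kappa)}_X H'$ has no component along $r$. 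With that substitution the bookkeeping you describe closes cleanly, and the proof is complete.
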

\begin{proof}
	These formulas follow from lemma~3.5 and corollary~3.1 in \cite{chen2011pseudo}.
\end{proof}

Now we consider the problem of finding the sphere in $\eunn(\kappa)$ passing through a point $\overline{p}$ with tangent space $V$ and mean curvature normal $z$ at $\overline{p}$. We make this precise as follows.

Let $\overline{p} \in \eunn(\kappa)$ be arbitrary, $V \subset T_{\overline{p}}\eunn(\kappa)$ a non-degenerate subspace with $m := \dim V \geq 1$, $\mu := \ind V$ and $z \in V^{\perp} \cap T_{\overline{p}}\eunn(\kappa)$.

Now let $a := \kappa \overline{p}  - z$. Then assuming this data defines a submanifold of $\eunn(\kappa)$, we use \cref{eq:meanCurvRel} to obtain the mean curvature normal in $\eunn$ at $\bar{p}$, which is given as follows:

\begin{equation}
	z - \kappa\overline{p} = - a
\end{equation}

Then this determines a sphere in $\eunn$ with initial data $(\overline{p}, V, a)$ by \cref{thm:spherSub}. Note that $a \neq 0$. In the following theorem we will show that this sphere in $\eunn$ is in fact the sphere in $\eunn(\kappa)$ determined by $(\overline{p}, V, a)$. First let $W := \R a \obot V$ and $\tilde{\kappa} := a^2$.

\begin{theorem}[Spherical submanifolds of $\eunn(\kappa)$] \label{thm:spherKapSub}
	There is exactly one $m$-dimensional connected and geodesically complete spherical submanifold $\tilde{N}$ of $\eunn(\kappa)$ with $\bar{p} \in \tilde{N}$, $T_{\bar{p}} \tilde{N} = V$ and having mean curvature vector at $\bar{p}$, z. $\tilde{N}$ is an open submanifold of N; $N = \eunn(\kappa) \cap (\bar{p} + W)$ is the spherical submanifold determined by $(\overline{p},V,a)$ in $\eunn(\kappa)$ and $\eunn$. In fact, $N$ can be given explicitly as follows (where $\simeq$ means isometric to):
	
	\begin{enumerate}[(a),style=multiline]
		\item \label{it:sSphHyp} $a$ is timelike, then $\mu \leq \nu - 1$ and $N \simeq H^m_{\mu}(\tilde{\kappa})$
		\item \label{it:sSphSph} $a$ is spacelike, then $N \simeq S^m_{\mu}(\tilde{\kappa})$  \\
		For cases (b) and (c), let $c = \overline{p} -\frac{a}{\tilde{\kappa}}$ be the center of N, then N is given as follows:
		\begin{equation}
			N = c + \{ p \in W \: | \: p^2 = \frac{1}{\tilde{\kappa}} \}
		\end{equation}
		\item \label{it:sSphPar} $a$ is lightlike, then $\mu \leq \nu - 1$ and $N \simeq \E^m_{\mu}$
		\begin{equation}
			N = \overline{p} + \{ p - \frac{1}{2} p^2 a  \: | \: p \in V \}
		\end{equation}
	\end{enumerate}
\end{theorem}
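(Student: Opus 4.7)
The plan is to reduce this theorem to the already-proven \cref{thm:spherSub} on spherical submanifolds of $\eunn$, using \cref{lem:sphEunnkap} to transfer the spherical property between the two ambient spaces and \cref{lem:unqSph} for uniqueness.

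First I would handle uniqueness: any connected, geodesically complete spherical submanifold $\tilde{N}$ of $\eunn(\kappa)$ passing through $\bar{p}$ with tangent space $V$ and mean curvature $z$ at $\bar{p}$ is unique by \cref{lem:unqSph}. It therefore suffices to exhibit $N$ as in the statement, show it is spherical in $\eunn(\kappa)$, show it contains $\bar{p}$ with $T_{\bar{p}} N = V$ and mean curvature $z$, and identify its geometry. Note first that $a \neq 0$: if $a = 0$ then $\kappa\bar{p} = z$, and taking inner product with $\bar{p}$ yields $1 = \kappa \bar{p}^2 = \bp{\bar{p}, z} = 0$ since $z \in T_{\bar{p}}\eunn(\kappa) = \bar{p}^\perp$.

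Next I would define $N$ to be the spherical submanifold of $\eunn$ determined by $(\bar{p}, V, a)$ according to \cref{thm:spherSub}; cases \ref{it:sSphHyp}, \ref{it:sSphSph}, \ref{it:sSphPar} correspond respectively to \ref{it:sphEunnb}, \ref{it:sphEunnc}, \ref{it:sphEunnd} of that theorem (the geodesic case is ruled out by $a \neq 0$). The crucial computation is to check $N \subseteq \eunn(\kappa)$. Using $\bar{p}^2 = \kappa^{-1}$, $V \perp \bar{p}$ (since $V \subset T_{\bar{p}}\eunn(\kappa)$), $z \perp \bar{p}$, and hence $\bp{\bar{p}, a} = \kappa \bar{p}^2 = 1$ and $a \in V^\perp$, one verifies directly by expanding $(c+q)^2$ (non-null case, $q \in W$ with $q^2 = \tilde{\kappa}^{-1}$) or $(\bar{p} + v - \tfrac{1}{2}v^2 a)^2$ (null case, $v \in V$) that the result equals $\kappa^{-1}$. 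In the non-null case, writing $q = \alpha a + v$ with $v \in V$ makes $\bp{c,q} = \alpha - \alpha = 0$ using $\bp{\bar{p},a}=1$ and $\bp{\bar{p},v}=0$; the null case uses $a^2 = 0$ and $\bp{v,a}=0$ to collapse the cross terms. This also yields the identification $N = \eunn(\kappa) \cap (\bar{p} + W)$ after a short matching calculation.

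Having placed $N$ inside $\eunn(\kappa)$, I would invoke \cref{lem:sphEunnkap}: since $N$ is spherical in $\eunn$, it is spherical in $\eunn(\kappa)$. To verify the mean curvature at $\bar{p}$, note that by \cref{thm:spherSub} the mean curvature of $N$ in $\eunn$ at $\bar{p}$ is $-a$, and \cref{eq:meanCurvRel} gives $H' = H + r/r^2$, which at $\bar{p}$ reads $H'|_{\bar{p}} = -a + \kappa \bar{p} = -(\kappa \bar{p} - z) + \kappa \bar{p} = z$, as required. Geodesic completeness and the explicit descriptions (isometry type, signature bounds $\mu \leq \nu - 1$ in the timelike/lightlike subcases) are inherited verbatim from \cref{thm:spherSub}, and the disconnectedness subtlety described in \cref{rem:tildN} transfers without change, yielding $\tilde{N}$ as the appropriate open submanifold of $N$.

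The main obstacle I anticipate is the direct verification that $N \subseteq \eunn(\kappa)$ in each of the three cases; everything else is a transparent application of earlier lemmas, but that containment is where the definition $a := \kappa\bar{p} - z$ (rather than $a := -z$ as in \cref{thm:spherSub}) actually earns its keep, and one must be careful to use both $\bp{\bar{p},a}=1$ and the orthogonality of $V$ and $a$ to $\bar{p}$.
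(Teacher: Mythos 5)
Your proposal is correct and takes essentially the same route as the paper's own proof: uniqueness via \cref{lem:unqSph}, construction of $N$ from \cref{thm:spherSub} with data $(\bar{p},V,a)$, the direct computation that $N = \eunn(\kappa) \cap (\bar{p}+W)$ using $\bp{\bar{p},a}=1$ and $c \perp W$, and transfer of the spherical property and mean curvature via \cref{lem:sphEunnkap}. Your explicit verifications that $a \neq 0$ and that $H'|_{\bar{p}} = -a + \kappa\bar{p} = z$ are small details the paper leaves implicit, but they do not change the argument.
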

\begin{remark}
	The relationship between $\tilde{N}$ and $N$ follows from \cref{rem:tildN}, since the above spheres are the spheres in $\eunn$ determined by $(\overline{p}, V, a)$ in \cref{thm:spherSub}.
\end{remark}
\begin{remark}
	$N$ is a geodesic submanifold of $\eunn(\kappa)$ iff $z = 0$ iff $W$ intersects the origin.
\end{remark}
\begin{proof}
	First we note that it suffices to show that there exists a single connected and geodesically complete sphere satisfying the initial conditions. By \cref{lem:unqSph}, it must be unique.

	The three definitions of $N$ given above follow directly from \cref{thm:spherSub} with initial data $(\overline{p}, V, a)$. Hence the relevant intrinsic properties of $N$ follow from \cref{thm:spherSub}. For the remainder of the proof we will assume $N$ is given by those definitions, and we will prove the following.
	
	\begin{claim}
		$N = \eunn(\kappa) \cap (\bar{p} + W)$
	\end{claim}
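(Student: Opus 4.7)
The plan is to verify the equality $N = \eunn(\kappa) \cap (\bar{p} + W)$ by showing both inclusions in each of the three cases, using only the explicit descriptions of $N$ already stated in the theorem and two orthogonality facts about the data $(\bar{p}, V, a)$.

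First I would record two basic identities that will be used repeatedly. Since $V \subset T_{\bar{p}}\eunn(\kappa) = \bar{p}^\perp$, every $v \in V$ satisfies $\bp{\bar{p}, v} = 0$; combined with $z \in V^\perp$ one obtains $a = \kappa\bar{p} - z \perp V$, so $W = \R a \obot V$ is indeed an orthogonal sum. Second, $\bp{\bar{p}, a} = \kappa \bar{p}^2 - \bp{\bar{p}, z} = 1 - 0 = 1$ because $z \in T_{\bar{p}}\eunn(\kappa)$ is orthogonal to $\bar{p}$ and $\bar{p}^2 = \kappa^{-1}$. In the non-lightlike cases this also yields $\bp{c, a} = \bp{\bar{p}, a} - \tilde{\kappa}/\tilde{\kappa} = 0$ and, for any $w = \alpha a + v \in W$ with $v \in V$, the formulas $\bp{\bar{p}, w} = \alpha$ and $\bp{a, w} = \alpha\tilde{\kappa}$.

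For cases \ref{it:sSphHyp} and \ref{it:sSphSph} (so $\tilde{\kappa} \neq 0$), I would check the inclusion $N \subseteq \eunn(\kappa) \cap (\bar{p} + W)$ as follows: any $q = c + p$ with $p \in W$ and $p^2 = 1/\tilde{\kappa}$ lies in $\bar{p} + W$ because $c - \bar{p} = -a/\tilde{\kappa} \in W$, and expanding $q^2 = c^2 + 2\bp{c,p} + p^2$ using the identities above gives $c^2 = \kappa^{-1} - 1/\tilde{\kappa}$, $\bp{c,p} = 0$, and $p^2 = 1/\tilde{\kappa}$, yielding $q^2 = \kappa^{-1}$. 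For the reverse inclusion, any $q \in \eunn(\kappa) \cap (\bar{p} + W)$ can be written $q = \bar{p} + w$ with $w = \alpha a + v \in W$; expanding $q^2 = \kappa^{-1}$ yields $w^2 = -2\alpha$, and then setting $p := w + a/\tilde{\kappa}$ one computes $p^2 = w^2 + 2\alpha + 1/\tilde{\kappa} = 1/\tilde{\kappa}$, so $q = c + p \in N$.

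For case \ref{it:sSphPar} ($a$ lightlike, $\tilde{\kappa} = 0$), the inclusion $N \subseteq \eunn(\kappa) \cap (\bar{p} + W)$ is immediate from $p - \frac{1}{2}p^2 a \in V + \R a = W$; squaring $q = \bar{p} + p - \frac{1}{2}p^2 a$ and using $a^2 = 0$, $\bp{\bar{p}, a} = 1$, $\bp{\bar{p}, p} = 0$, $\bp{a, p} = 0$ collapses $q^2$ to $\bar{p}^2 = \kappa^{-1}$. Conversely, given $q = \bar{p} + \alpha a + v \in \eunn(\kappa) \cap (\bar{p} + W)$ with $v \in V$, the condition $q^2 = \kappa^{-1}$ reduces (using the same four identities) to $2\alpha + v^2 = 0$, so $\alpha = -\frac{1}{2}v^2$ and therefore $q = \bar{p} + v - \frac{1}{2}v^2 a \in N$ with the choice $p = v$. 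The only step requiring any care is being systematic about the decomposition $w = \alpha a + v$ in $W$; everything else is direct substitution, and no subtlety is involved.
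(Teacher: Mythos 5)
Your proof is correct and follows essentially the same route as the paper: the same key identities ($\bp{\bar{p},a}=1$, $c^{2}=\kappa^{-1}-\tilde{\kappa}^{-1}$, $c\perp W$) and the same expansion of $q^{2}$ for a point of $\bar{p}+W$ in each of the three cases. The only cosmetic difference is that the paper parametrizes $\bar{p}+W=c+W$ around the center $c$ from the start and reads off both inclusions from a single ``iff'' ($p\in\eunn(\kappa)$ iff $\tilde{p}\in W(\tilde{\kappa})$), whereas you prove the two inclusions separately and translate to $c$ at the end.
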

	\begin{proof}
		Note that the following equations are satisfied: $\bp{\bar{p},\bar{p}} = \dfrac{1}{\kappa}$, $\bp{a,\bar{p}} = 1$
	
		First we consider the case of \cref{it:sSphHyp,it:sSphSph}. We can always write $p = c + \tilde{p}$ where $\tilde{p} \in W$. Also note that the following holds:
		
		\begin{align}
			\bp{c,c} & = \bp{\bar{p},\bar{p}} - 2 \scalprod{\bar{p}}{\frac{a}{\tilde{\kappa}}} + \frac{1}{\tilde{\kappa}^2}\bp{a,a} \label{eq:lenC} \\
			& = \dfrac{1}{\kappa} - 2 \dfrac{1}{\tilde{\kappa}} +  \dfrac{1}{\tilde{\kappa}} \\
			& = \dfrac{1}{\kappa} - \dfrac{1}{\tilde{\kappa}}
		\end{align}
		
		Then since $\bp{c,a} = 0$, we have 
		
		\begin{align}
			\bp{p,p} & = c^2 - 2 \bp{c,\tilde{p}} + \tilde{p}^2 \\
			& = \dfrac{1}{\kappa} - \dfrac{1}{\tilde{\kappa}} + \tilde{p}^2
		\end{align}
		
		The above equation shows that $p \in \eunn(\kappa)$ iff $\tilde{p} \in W(\tilde{\kappa})$, which proves the result.
		
		Now for \cref{it:sSphPar}. We can always write $p = \bar{p} + v + w a$ where $v \in V$ and $w \in \R$. Hence
		
		\begin{align}
			\bp{p,p} & = \dfrac{1}{\kappa} + v^2 + 2 w
		\end{align}
		
		The above equation shows that $p \in \eunn(\kappa)$ iff $w = - \frac{1}{2} v^2$, which proves the result.	
	\end{proof}
	
	Thus we have shown that $N$ is a spherical submanifold of $\eunn$ contained in $\eunn(\kappa)$. It then follows from \cref{lem:sphEunnkap} that $N$ is a spherical submanifold of $\eunn(\kappa)$ with mean curvature normal $z$ at $\bar{p}$. Furthermore by \cref{prop:eunnUmb}~\ref{it:umbCc}, this sphere is of constant curvature $\kappa + z^2 = a^2 = \tilde{\kappa}$.
\end{proof}

Now we mention when we can restrict a sphere in $\eunn$ to one in $\eunn(\kappa)$. Suppose $(\bar{p}, V, -z)$ determines a sphere in $\eunn$ with $\bar{p} \in \eunn(\kappa)$ and $V \subset T_{\bar{p}}\eunn(\kappa)$. Then define $z'$ as

\begin{equation}
	z' := z + \kappa \overline{p} \in V^{\perp}
\end{equation}

We know that $\bar{p} \in V^{\perp}$ and $z \in V^{\perp}$ by hypothesis. In order for $\bp{z', \bar{p}} = 0$, we must additionally assume $\bp{z,\bar{p}} = - 1$. In this case, $(\bar{p}, V, -z)$ define initial data for a sphere in $\eunn(\kappa)$. It follows from the above theorem that this sphere is simultaneously the sphere in $\eunn$ and in $\eunn(\kappa)$ determined by $(\bar{p}, V, -z)$.

\subsection{Warped Product decompositions of Spherical submanifolds of Pseudo-Euclidean space}

Suppose $\psi : N_0 \times_{\rho_1} N_1 \times \cdots \times_{\rho_k} N_k \rightarrow \eunn(\kappa) $ is a warped product decomposition of $\eunn(\kappa)$ associated with initial data $(\bar{p};\bigobot\limits_{i=0}^k V_i; a_1,...,a_k)$ where each $a_i = \kappa \bar{p} - z_i$. By \cref{eq:wpSccMean}, the mean curvature vectors at $\bar{p}$ satisfy the following equation:

\begin{equation}
	\bp{z_i,z_j} = - \kappa \quad i \neq j
\end{equation}

By \cref{thm:spherKapSub}, $L_i(\bar{p})$ is a spherical submanifold of $\eunn$ determined by $(\bar{p},V_i, a_i)$. Note that $a_i \neq 0$. Furthermore the above equation implies that

\begin{equation}
	\bp{a_i,a_j} = 0 \quad i \neq j
\end{equation}

Also recall that by assumption, the $a_i$ are linearly independent. Thus the initial data $(\bar{p}; (\R \bar{p} \obot V_0) \obot V_1 \obot \dots  \obot V_k; a_1, \dotsc,  a_k)$ determines a proper warped product decomposition of the ambient space $\eunn$. Furthermore, we note that this warped product decomposition is in canonical form; the canonical form was specifically designed to have this property. We now consider the converse problem of restricting a warped product decomposition in $\eunn$ to $\eunn(\kappa)$. The following theorem shows that this is always possible when the warped product in $\eunn$ is proper and in canonical form:

\begin{theorem}[Restricting Warped products to $\eunn(\kappa)$] \label{thm:eunnKapRestWP}
	Let $\psi$ be a proper warped product decomposition of $\eunn$ associated with $(\bar{p}; \bigobot\limits_{i=0}^k V_i; a_1,...,a_k)$ which is in canonical form. Suppose $\kappa^{-1} := \bar{p}^2 \neq 0$ and let $N' := N_0(\kappa) \times_{\rho_1} N_1 \times \cdots \times_{\rho_k} N_k$. Note that $N_0(\kappa)$ is an open subset of the sphere in $\eunn(\kappa)$ determined by $(\bar{p},(\bar{p}^{\perp} \cap V_0),0)$. Then $\phi : N' \rightarrow \eunn(\kappa)$ defined by $\phi := \psi|_{N'}$ is a warped product decomposition of $\eunn(\kappa)$ determined by $(\bar{p}; (\bar{p}^{\perp} \cap V_0)\bigobot\limits_{i=1}^k V_i; a_1,...,a_k)$.
	
	Furthermore for any point $p \in \Ima(\psi)$ with $p^2 \neq 0$, the leaf of the foliation induced by $N_i$, $L_i(p)$, is simultaneously a sphere in $\eunn$ and $\eunn(\frac{1}{p^2})$. Also $\psi$ is in canonical form at every $p \in \Ima(\psi)$. 
\end{theorem}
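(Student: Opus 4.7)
The central tool will be equation~\eqref{eq:wpdecpSp}, which says $\psi(p_0,\dotsc,p_k)^2=p_0^2$, together with \cref{thm:spherKapSub} and \cref{lem:sphEunnkap}. The idea is simply to restrict $\psi$ to the codomain where $p_0 \in N_0(\kappa)$ and check that nothing breaks. First I would verify that $N_0(\kappa)$ is itself a (geodesic) spherical submanifold of $\eunn(\kappa)$: since the warped product is in canonical form, $\bar{p}\in V_0$ and so the initial data $(\bar{p},\bar{p}^{\perp}\cap V_0,0)$ for \cref{thm:spherKapSub} produces $a=\kappa\bar{p}$, $W=\R\kappa\bar{p}\obot(\bar{p}^{\perp}\cap V_0)=V_0$, hence a sphere equal to $\eunn(\kappa)\cap V_0$; $N_0(\kappa)$ is by definition its open subset $\{p_0 \in N_0 : p_0^2=\kappa^{-1}\}$.

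Next, I would verify that the triple $(\bar{p};(\bar{p}^{\perp}\cap V_0)\obot V_1\obot\cdots\obot V_k;a_1,\dotsc,a_k)$ really is initial data for a warped product decomposition of $\eunn(\kappa)$: the orthogonal decomposition of $T_{\bar{p}}\eunn(\kappa)=\bar{p}^{\perp}$ is immediate because each $V_i$ ($i\ge 1$) already lies in $\bar{p}^{\perp}$ (as $\bar{p}\in V_0\perp V_i$); the new mean-curvature data $z_i'=\kappa\bar{p}-a_i$ lies in $\bar{p}^{\perp}\cap V_0$ since $\langle \kappa\bar{p}-a_i,\bar{p}\rangle=1-1=0$ by the canonical-form relation $\langle a_i,\bar{p}\rangle=1$; pairwise orthogonality of the $a_i$ and their linear independence are inherited from $\psi$. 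By \eqref{eq:wpdecpSp}, $\phi=\psi|_{N'}$ maps $N'$ into $\eunn(\kappa)$, and it is a warped product decomposition of $\eunn(\kappa)$ because the warping functions $\rho_i$ depend only on $p_0\in N_0(\kappa)\subset N_0$, the isometry property is inherited from $\psi$, and $\phi(\bar{p},\dotsc,\bar{p})=\bar{p}$.

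For the second statement, fix $p=\psi(p_0,\dotsc,p_k)\in\Ima(\psi)$ with $p^2\neq 0$. The leaf $L_i(p)=\{\psi(p_0,\dotsc,q_i,\dotsc,p_k) : q_i\in N_i\}$ is a spherical submanifold of $\eunn$ by \cref{prop:tpProps}; and by \eqref{eq:wpdecpSp} every point of $L_i(p)$ has squared-norm $p_0^2=p^2$, so $L_i(p)\subset \eunn(\tfrac{1}{p^2})$. Then \cref{lem:sphEunnkap} upgrades this to the statement that $L_i(p)$ is a spherical submanifold of $\eunn(\tfrac{1}{p^2})$ as well.

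The main obstacle is the canonical-form claim at a general $p\in\Ima(\psi)$. Here the plan is to compute the ambient mean-curvature vector $a_i^{(p)}$ at $p$ directly from \cref{prop:tpProps}~(2): on $N_0$, $\rho_i(q_0)=\langle a_i,q_0\rangle$ (canonical form) gives $\nabla(\log\rho_i)=a_i/\rho_i$, and pushing forward through $\psi_*$ (using \eqref{eq:nonNullWPPush} and the observation that $P_0 a_i=0$ since $a_i\in V_0$ is parallel to itself in the $W_i$-summand) reduces to $\psi_*a_i=\kappa_i(p_i-c_i)$, so $a_i^{(p)}=\rho_i(p_0)^{-1}\kappa_i(p_i-c_i)$; an analogous computation handles the null case. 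The identity $\langle p,a_i^{(p)}\rangle=1$ then reduces to $\langle p,p_i-c_i\rangle=\rho_i(p_0)/\kappa_i$, which follows directly from the explicit form of $\psi(p_0,\dotsc,p_k)$ in \cref{cor:WPconForm} using the $W_0\obot W_1\obot\cdots\obot W_k$ decomposition. Finally, $p\in V_0^{(p)}$ holds because the leaf $L_0(p)=\{\psi(q_0,p_1,\dotsc,p_k):q_0\in V_0\}$ is a linear (not just affine) subspace of $\eunn$: the formula in \cref{cor:WPconForm} sends $q_0=0$ to $0$, so $0\in V_0^{(p)}=\mathrm{span}\,L_0(p)\ni p$.
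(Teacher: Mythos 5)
Your proposal is correct and follows essentially the same route as the paper: restrict via \cref{eq:wpdecpSp}, note the warped product metric is inherited, invoke \cref{thm:spherKapSub}/\cref{lem:sphEunnkap} for the leaves, and verify the canonical form at a general $p$ by computing the leaf mean curvature $-a_i/\rho_i$ and pushing forward through $\psi_*$. In fact you supply more detail than the paper does at the two places it says ``can show'' (the identification $\psi_*a_i=\kappa_i(p_i-c_i)$ and the check $\langle p,a_i^{(p)}\rangle=1$), which is a welcome tightening rather than a deviation.
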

\begin{proof}
	By \cref{eq:wpdecpSp} in \cref{cor:WPconForm} it follows that $\phi$ is a diffeomorphism onto $\phi(N') \subseteq \eunn(\kappa)$. Clearly the restriction of the metric on $N$ to $N'$ is still a warped product metric. Hence it follows that $\phi$ is a warped product decomposition of $\eunn(\kappa)$, i.e. an isometry from a warped product. Furthermore by \cref{thm:spherKapSub} it follows that for each $i > 0$, $N_i$ is also the sphere in $\eunn(\kappa)$ determined by $(\bar{p}, V_i, z_i)$.
	
	Now for the last point, fix $p \in \Ima(\psi)$ with $p^2 \neq 0$. Let $\tilde{r}$ be the dilatational vector field in $N_0$ and $r := \psi_*\tilde{r}$. Can show that $r$ is also the dilatational vector field in $\eunn$ (e.g. see \cref{eq:psiGenFormEunn}). Now if $\rho_i = \bp{\tilde{r}, a_i}$, then it can be shown using results in \cite{Meumertzheim1999} that the mean curvature vector $H_i$ is:
	
	\begin{equation}
		H_i = - \frac{a_i}{\rho_i}
	\end{equation}
	
	Hence $\bp{\tilde{r}, -H_i} = 1$. Thus at $p$, by making the identification $r = p$, we see that $T N_i$ is orthogonal to $p = \psi_* \tilde{p}$ and $\bp{\tilde{p}, -H_i} = 1$. It follows from the discussion following \cref{thm:spherKapSub} that $L_i(p)$ is also a sphere in $\eunn(\frac{1}{p^2})$.
\end{proof}
\begin{remark}[Connectedness]
	\Cref{rem:WPEunnCon} gives the appropriate modifications of $\Ima(\psi)$ when each $N_i$ for $i > 0$ are required to be connected. When $N_0(\kappa) \simeq H^m_0(\tilde{\kappa})$ or $N_0(\kappa) \simeq S^m_m(\tilde{\kappa})$, $N_0(\kappa)$ is disconnected \cite[Section~4.6]{barrett1983semi} and so we modify $N_0(\kappa)$ as follows: By \cref{thm:spherKapSub} it follows that $N_0(\kappa)$ is an open subset of the sphere in $\eunn$ determined by $(\bar{p},(\bar{p}^{\perp} \cap V_0),\kappa \bar{p})$. Thus to enforce connectedness, it follows by \cref{rem:tildN} that we must replace $N_0(\kappa)$ with 
	\begin{equation}
		N_0(\kappa) \cap \{p \in V_0 \: | \: \bp{\kappa \bar{p},p} > 0 \}
	\end{equation}

\begin{proof}
	$a = \kappa \bar{p}$, $\tilde{\kappa} = a^2 = \kappa$
	\begin{align}
		c & = \bar{p} - \frac{a}{\tilde{\kappa}} \\
		& = 0
	\end{align}
\end{proof}

Now we show the effect of this on $\phi(N')$ when $\nu = 1$.

\begin{parts} 
	\item $a_i$ is time-like for some $i$ \\
	$N_0(\kappa)$ is automatically connected since $N_0(\kappa) \subset \{p \in V_0 \: | \: \bp{a_i,p} > 0 \text{ for each i} \}$, then $N_0(\kappa) \subset \{p \in V_0 \: | \: \bp{\kappa \bar{p},p} > 0 \}$ since $\bp{a_i,\kappa \bar{p}} = \kappa <0$ (see \cite[P.~143]{barrett1983semi} and Nolker's proof of the hyperbolic case).
	\item null case, $a := a_1$ is light-like \\
	$N_0(\kappa)$ is connected here as well. First observe that it follows from the equation for $\psi$ in \cref{cor:WPconForm} that
	\begin{equation}
		\bp{a, \psi(p_0,p_1)} = \bp{a, p_0} > 0
	\end{equation}
	
	Thus it follows that $N_0(\kappa)$ and $\phi(N')$ are in the time cone opposite to $a$ (see remarks preceding Nolker's proof of the hyperbolic case). Thus it follows that $N_0(\kappa) \subset \{p \in V_0 \: | \: \bp{\kappa \bar{p},p} > 0 \}$, so $N_0(\kappa)$ and hence $\phi(N')$ are connected.
	
	In this case $\phi(N')$ is the maximal connected component of $\eunn(\kappa)$ passing through $\bar{p}$. 
	\item $a_i$ is space-like for each $i$ \\
	First observe that it follows from the proof of \cref{cor:WPconForm} that $c = P_0c \in W_0$ and $p_i-c_i \in W_i$ for $i > 0$, hence
	
	\begin{align}
		\scalprod{c}{ \psi(p_0,\dotsc,p_k)} & = \scalprod{ c}{ P_0p_0 + \sum\limits_{i=1}^k \bp{a_i, p_0} (p_i-c_i)} \\
		& = \scalprod{ c}{ P_0p_0 } \\
		& = \bp{c, p_0}
	\end{align}
	
	Also since since $\bp{c,a_i} = 0$, we have that
	
	\begin{align}
		\bp{c,c} & = \bp{c,\bar{p}} \\
		& = \bp{\bar{p} - \sum\limits_{i=1}^k \frac{a_i}{\kappa_i}, \bar{p}} \\
		& = \frac{1}{\kappa} - \sum\limits_{i=1}^k \frac{1}{\kappa_i} \\
		& < 0 
	\end{align}
	
	In other words, $c$ is time-like. Also the above equation shows that $\bp{c,\kappa \bar{p}} > 0$, thus $c$ and $\kappa \bar{p}$ are in opposite time cones (see \cite[P.~143]{barrett1983semi}). Hence,
	
	\begin{equation}
		\{p \in V_0 \: | \: \bp{\kappa \bar{p},p} > 0 \} = \{p \in V_0 \: | \: \bp{\kappa c,p} > 0 \}
	\end{equation}
	
	Thus since $\scalprod{c}{ \psi(p_0,\dotsc,p_k)} = \bp{c, p_0}$, we see that $\phi(N')$ becomes 
	
	\begin{equation} 
		\phi(N') \cap \{ p \in \eunn \: | \: \bp{\kappa c,p} > 0 \}
	\end{equation}
	
%
\end{parts}

\end{remark}

In the following corollary we show how to obtain any warped product decomposition of $\eunn(\kappa)$ by restricting an appropriate warped product decomposition of $\eunn$. The ``appropriate'' warped product product decomposition of $\eunn$ to restrict follows from the discussion preceding the above theorem. Thus together with the above theorem, we have the following corollary:

\begin{corollary}[Warped product decompositions of $\eunn(\kappa)$]
	Suppose $(\bar{p};\bigobot\limits_{i=0}^k V_i; a_1,...,a_k)$ define initial data for a warped product decomposition of $\eunn(\kappa)$.

	Let $\phi$ be the warped product decomposition of $\eunn(\kappa)$ given in the above theorem by restricting the warped product decomposition of $\eunn$ with initial data $(\bar{p}; (\R \bar{p} \obot V_0) \bigobot\limits_{i=1}^k V_i; a_1, \dotsc,  a_k)$.
	
	Then $\phi$ is a warped product decomposition of $\eunn(\kappa)$ determined by $(\bar{p};\bigobot\limits_{i=0}^k V_i; a_1,...,a_k)$.
\end{corollary}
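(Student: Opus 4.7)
My plan is to verify that the modified initial data $(\bar{p}; (\R \bar{p} \obot V_0) \obot V_1 \obot \dots \obot V_k; a_1, \dotsc, a_k)$ really is valid initial data in canonical form for a proper warped product decomposition of $\eunn$, so that \cref{thm:eunnKapRestWP} applies, and then to match the resulting restricted decomposition against the originally specified data.

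First I would check the canonical form conditions. Since $\bar{p} \in \eunn(\kappa)$ with $\kappa \neq 0$, we have $\bar{p}^2 = \kappa^{-1} \neq 0$, so $\R \bar{p}$ is a non-degenerate line. Because $V_0 \subseteq T_{\bar{p}}\eunn(\kappa) = \bar{p}^\perp$, the direct sum $\R \bar{p} \obot V_0$ is genuinely orthogonal and non-degenerate. Trivially $\bar{p} \in \R \bar{p} \obot V_0$. For the canonical form condition $\bp{\bar{p}, a_i} = 1$, recall $a_i = \kappa \bar{p} - z_i$ with $z_i \in V_0 \subset \bar{p}^\perp$, so $\bp{\bar{p}, a_i} = \kappa \bp{\bar{p},\bar{p}} = 1$. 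The $a_i$ are pair-wise orthogonal and linearly independent by the definition of initial data for $\eunn(\kappa)$, and each $a_i \neq 0$ (equivalently the decomposition is proper) because $\bp{\bar{p}, a_i} = 1$. Finally the total tangent space $(\R \bar{p} \obot V_0) \obot V_1 \obot \dots \obot V_k$ equals $\R \bar{p} \obot T_{\bar{p}}\eunn(\kappa) = T_{\bar{p}}\eunn$, so we really have a decomposition of all of $\eunn$.

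Next I would invoke \cref{thm:eunnKapRestWP} directly: the proper canonical-form warped product decomposition $\psi$ of $\eunn$ determined by this data restricts to a warped product decomposition $\phi := \psi|_{N'}$ of $\eunn(\kappa)$ on $N' = N_0(\kappa) \times_{\rho_1} N_1 \times \cdots \times_{\rho_k} N_k$. The spherical factors $N_1, \dotsc, N_k$ are unchanged from the ambient picture, and by the same theorem the geodesic factor $N_0(\kappa)$ is an open subset of the sphere in $\eunn(\kappa)$ determined by $(\bar{p}, \bar{p}^\perp \cap (\R \bar{p} \obot V_0), 0) = (\bar{p}, V_0, 0)$, which is geodesic in $\eunn(\kappa)$ with tangent space $V_0$ at $\bar{p}$.

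It remains to identify the initial data of $\phi$. The tangent space decomposition at $\bar{p}$ is $T_{\bar{p}}N_0(\kappa) \obot T_{\bar{p}}N_1 \obot \dots \obot T_{\bar{p}}N_k = V_0 \obot V_1 \obot \dots \obot V_k$, as required. For each $i > 0$, by \cref{thm:spherKapSub} $N_i$ is simultaneously the sphere in $\eunn$ and in $\eunn(\kappa)$ determined by $(\bar{p}, V_i, a_i)$; equivalently, by \cref{lem:sphEunnkap} the mean curvature normal $H_i'$ in $\eunn(\kappa)$ at $\bar{p}$ is related to the ambient mean curvature normal $H_i = -a_i$ by
\begin{equation}
H_i' = H_i + \frac{\bar{p}}{\bar{p}^2} = -a_i + \kappa \bar{p} = z_i,
\end{equation}
which is exactly the mean curvature vector prescribed by the original initial data. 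Hence $\phi$ is a warped product decomposition of $\eunn(\kappa)$ determined by $(\bar{p}; \bigobot_{i=0}^k V_i; a_1, \dotsc, a_k)$.

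Since all substantive geometric content was already proven in \cref{thm:eunnKapRestWP}, \cref{thm:spherKapSub}, and \cref{lem:sphEunnkap}, I do not expect a genuine obstacle: the only non-bookkeeping point is the verification that $\bp{\bar{p}, a_i} = 1$, which is immediate once one uses $z_i \perp \bar{p}$.
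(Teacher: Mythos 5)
Your proposal is correct and follows essentially the same route as the paper: the paper's justification for this corollary is precisely the discussion preceding \cref{thm:eunnKapRestWP}, which verifies that the lifted data $(\bar{p}; (\R \bar{p} \obot V_0) \obot V_1 \obot \dots \obot V_k; a_1, \dotsc, a_k)$ is proper initial data in canonical form (the $a_i$ pairwise orthogonal, nonzero, linearly independent, and $\bp{\bar{p},a_i}=1$), followed by an application of that theorem. Your extra bookkeeping step matching the mean curvature normals via \cref{lem:sphEunnkap} ($H_i' = -a_i + \kappa\bar{p} = z_i$) is a correct and slightly more explicit version of what the paper delegates to \cref{thm:spherKapSub}.
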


We now mention which warped product decompositions are possible in $\eunn(\kappa)$. We do this by finding out when it's possible to restrict a warped product on the ambient space. Given a warped product $ (V_0 \obot V_1 \obot \dots  \obot V_k; a_1, \dotsc,  a_k)$ passing through an arbitrary point in $\eunn$, in order to restrict it to $\eunn(\kappa)$, we need it to pass through a point $\bar{p} \in V_0$ with $\bar{p}^2 = \kappa$ satisfying $\bp{\bar{p}, a_i} = 1$. So for a fixed $\kappa \neq 0$, we enumerate the distinct warped products in $\eunn$, expand $\bar{p} \in V_0$ so that $\bp{\bar{p}, a_i} = 1$ and determine if it's possible for $\bar{p}^2 = \kappa$. By making use of \cref{thm:wpInEunn}, we have the following results:

\begin{theorem}[Warped products in Spherical submanifolds of $\E^n$ and $M^n$] \label{thm:wpInEunnKap}
	Suppose $N = N_0 \times_{\rho_1} N_1 \times \cdots \times_{\rho_k} N_k$ is a warped product decomposition of an open subset of a spherical submanifold of $\E^n$ or $M^n$. This warped product is necessarily proper. If at most one of the $N_i$ are intrinsically flat, then N is isometric to one of the following warped products:

	In $S^n$:
	\begin{align}
		S^m \times _{\rho_1} S^{n_1} \times \cdots \times _{\rho_s} S^{n_s}
	\end{align}
	
	In $dS^n$:
	\begin{align}
		dS^m \times _{\lambda_1} \E^{n_1} \times _{\rho_2} S^{n_2} \times \cdots \times _{\rho_s} S^{n_s} \\
		dS^m \times _{\tau_1} H^{n_1} \times _{\rho_2} S^{n_2} \times \cdots \times _{\rho_s} S^{n_s} \\
		S^m \times _{\rho_1} dS^{n_1} \times _{\rho_2} S^{n_2} \times \cdots \times _{\rho_s} S^{n_s} \\
		dS^m \times _{\rho_1} S^{n_1} \times _{\rho_2} S^{n_2} \times \cdots \times _{\rho_s} S^{n_s}
	\end{align}

	In $H^n$:
	\begin{align}
		H^m \times _{\lambda_1} \E^{n_1} \times _{\rho_2} S^{n_2} \times \cdots \times _{\rho_s} S^{n_s} \\
		H^m \times _{\tau_1} H^{n_1} \times _{\rho_2} S^{n_2} \times \cdots \times _{\rho_s} S^{n_s} \\
		H^m \times _{\rho_1} S^{n_1} \times _{\rho_2} S^{n_2} \times \cdots \times _{\rho_s} S^{n_s}
	\end{align}
	
	\noindent where $\nabla \rho_i$,$\nabla \tau_i$,$\nabla \lambda_i$ is a spacelike,timelike, lightlike vector field respectively.
\end{theorem}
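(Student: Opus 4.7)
The plan is to lift the given warped product decomposition of the hyperquadric to the ambient pseudo-Euclidean space and then apply the already-established classification in \cref{thm:wpInEunn}. Specifically, given a warped product decomposition of an open subset of $\eunn(\kappa)$ (where $\nu \le 1$), it has some initial data $(\bar{p}; \bigobot_{i=0}^k V_i; a_1,\dotsc,a_k)$. By the corollary immediately preceding this theorem, this arises as the restriction of a proper warped product decomposition of the ambient $\eunn$ with extended initial data $(\bar{p}; (\R \bar{p} \obot V_0) \bigobot_{i=1}^k V_i; a_1,\dotsc,a_k)$, and it is in canonical form because $\bp{\bar{p},a_i} = \bp{\bar{p},\kappa\bar{p}-z_i} = 1 - \bp{\bar{p},z_i} = 1$. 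Under restriction, the spherical factors $N_i$ with $i\ge 1$ are unchanged, whereas the ambient geodesic factor $N_0$ shrinks to the totally geodesic submanifold $N_0(\kappa)$ of $\eunn(\kappa)$ determined by $(\bar{p}, \bar{p}^\perp \cap V_0, \kappa\bar{p})$ in the sense of \cref{thm:spherKapSub}.

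Next I would invoke \cref{thm:wpInEunn} to enumerate the possible ambient decompositions. Under the hypothesis that at most one of the $N_i$ is intrinsically flat, its lists give one type in the Euclidean ambient case and four types in the Minkowski ambient case. For each of these I would ask two questions: (i) for which values of $\kappa$ can the canonical-form point $\bar{p} \in V_0$ be chosen with $\bar{p}^2 = 1/\kappa$ and $\bp{\bar{p},a_i}=1$, and (ii) what is the signature of $\bar{p}^\perp \cap V_0$ inside $V_0$, so that \cref{thm:spherKapSub} can identify $N_0(\kappa)$ as one of $S^m$, $dS^m$, or $H^m$? The causal character of $\bar{p}$ inside $V_0$ dictates both, and it is constrained by the signature of $V_0$ (which is read off from the type in \cref{thm:wpInEunn}) and by the requirement $\bp{\bar{p},a_i}=1$.

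The case analysis proceeds as follows: when the ambient is $\E^{n+1}$ only the sole type $\E^{m+1}\times_{\rho_1} S^{n_1}\times\cdots$ occurs; $V_0$ is positive definite so necessarily $\kappa>0$, and $N_0(\kappa) \simeq S^m$, producing the single $S^n$ line. In the Minkowski ambient case, the three Lorentzian-$V_0$ types (the two null warped ones with a null or timelike $a_1$, and the one with all spacelike $a_i$) each admit $\bar{p}$ of either causal character, so they restrict for both signs of $\kappa$, yielding the $dS^m$-based lines in $dS^n$ and the corresponding $H^m$-based lines in $H^n$; the Minkowski type with $V_0 = \E^{m+1}$ forces $\kappa>0$ and yields the $S^m\times_{\rho_1} dS^{n_1}\times\cdots$ line. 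Properness of the restricted warped product is automatic because each $a_i = \kappa \bar{p} - z_i$ is nonzero (either $z_i = 0$ and $a_i = \kappa\bar{p} \neq 0$, or $z_i \ne 0$ and $a_i \ne 0$ since $z_i \perp \bar{p}$).

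The main obstacle is not any single hard step but the careful bookkeeping of causal types: one must check that the signature of $\bar{p}^\perp \cap V_0$ inside $V_0$ is consistent with the index required for $N_0(\kappa)$ to be of the claimed form, using that the orthogonal complement in a Lorentzian $V_0$ of a spacelike (resp.\ timelike) vector has index $1$ (resp.\ $0$). Cross-referencing this with the four Minkowski types in \cref{thm:wpInEunn} and the Euclidean type exhausts the enumeration and matches the three lists stated in the theorem.
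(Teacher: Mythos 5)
Your classification argument is essentially the paper's own: the paper likewise enumerates the ambient decompositions from \cref{thm:wpInEunn}, asks for which sign of $\kappa$ a canonical-form point $\bar{p}\in V_0$ with $\bar{p}^2=\kappa^{-1}$ and $\bp{\bar{p},a_i}=1$ exists, and reads off $N_0(\kappa)$ from the causal character of $\bar{p}$ via \cref{thm:eunnKapRestWP,thm:spherKapSub}. Your case-by-case bookkeeping (one Euclidean type forcing $\kappa>0$; the Lorentzian-$V_0$ types admitting both signs; the Euclidean-$V_0$ type in Minkowski space forcing $\kappa>0$) reproduces the three lists correctly.

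The one genuine flaw is your justification of properness. You argue that each $a_i=\kappa\bar{p}-z_i$ is nonzero, but properness of the \emph{restricted} warped product is the statement that no $N_i$ is geodesic in $\eunn(\kappa)$, i.e.\ that each mean curvature vector $z_i$ in $\eunn(\kappa)$ is nonzero (equivalently, each $\rho_i$ is non-constant); $a_i\neq 0$ only says the factor is non-geodesic in the ambient $\eunn$, and is perfectly compatible with $z_i=0$, $a_i=\kappa\bar{p}$. When $k\geq 2$ the relation $\bp{z_i,z_j}=-\kappa\neq 0$ for $i\neq j$ happens to exclude $z_i=0$, but for $k=1$ nothing in your argument does. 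The paper closes this via \cref{lem:EunnKapProdDecomp}: the Hessian identity $H(X,Y)=-\kappa\rho\bp{X,Y}$ on the geodesic factor shows a warping function of a space of constant curvature $\kappa\neq 0$ cannot be constant, so no product (i.e.\ improper) decomposition exists. You should replace your $a_i\neq 0$ remark with this curvature obstruction.
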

\begin{proof}
	For the proof that the warped products are proper, see \cref{lem:EunnKapProdDecomp}.
\end{proof}

\section*{Acknowledgments}

I would like to thank Spiro Karigiannis for reading my thesis \cite{Rajaratnam2014}, which contains the contents of this article.
\phantomsection
\addcontentsline{toc}{section}{References}
\printbibliography
\end{document}